\tikzset{
	>=stealth',
	help lines/.style={dashed, thick},
	axis/.style={<->},
	important line/.style={thick},
	connection/.style={thick, dotted},
}
\pgfplotsset{compat=1.5}
\setlist[enumerate]{leftmargin=.2in}
\setlist[itemize]{leftmargin=.2in}
\newtheorem{theorem}{Theorem}
\newtheorem{example}[theorem]{Example}
\newtheorem{assumption}{Assumption}
\newtheorem{lemma}[theorem]{Lemma}
\newtheorem{definition}[theorem]{Definition}
\newtheorem{remark}[theorem]{Remark}
\newtheorem{proposition}[theorem]{Proposition}
\def\revision#1{#1}  
\def\C{{\mathbb C}}
\def\P{{\mathbb P}}
\def\R{{\mathbb R}}
\def\Sb{{\mathbb S}}
\def\E{{\mathbb E}}
\def\N{{\mathbb N}}
\def\U{{\mathbb U}}
\def\W{{\mathbb W}}
\def\bB{{\boldsymbol{B}}}
\def\bH{{\boldsymbol{H}}}
\def\bM{\boldsymbol{M}}
\def\bR{\boldsymbol{R}}
\def\bx{{\boldsymbol{x}}}
\def\by{{\boldsymbol{y}}}
\def\bz{{\boldsymbol{z}}}
\def\bv{{\boldsymbol{v}}}
\def\bw{{\boldsymbol{w}}}
\def\ba{{\boldsymbol{a}}}
\def\br{{\boldsymbol{r}}}
\def\bg{\boldsymbol{g}}
\def\bx{\boldsymbol{x}}
\def\bu{\boldsymbol{u}}
\def\bphi{\boldsymbol{\phi}}
\def\brho{\boldsymbol{\rho}}
\def\bmu{\boldsymbol{\mu}}
\def\bnu{{\boldsymbol{\nu}}}
\def\AA{{\mathcal A}}
\def\CC{{\mathcal C}}
\def\EE{{\mathcal E}}
\def\FF{{\mathcal F}}
\def\GG{{\mathcal G}}
\def\HH{{\mathcal H}}
\def\II{{\mathcal I}}
\def\LL{{\mathcal L}}
\def\MM{{\mathcal M}}
\def\OO{{\mathcal O}}
\def\PP{{\mathcal P}}
\def\RR{{\mathcal R}}
\def\TT{{\mathcal T}}
\def\UU{{\mathcal U}}
\def\XX{{\mathcal X}}
\def\YY{{\mathcal Y}}
\def\fD{{\mathfrak D}}
\def\fN{{\mathfrak N}}
\def\norm#1#2{\left\lVert#1\right\rVert_{#2}}
\def\seminorm#1#2{\left\vert #1\right\vert_{#2}}
\def\set#1#2{\left\{#1\,:\,#2\right\}}
\def\b#1{{\boldsymbol{#1}}}
\def\ol#1{\overline{#1}}
\newcommand{\dual}[1]{\left\langle #1\right\rangle}
\def\L2norm#1{\norm{#1}{L^2(D)}}
\def\H1norm#1{\norm{#1}{H^1(D)}}
\def\Re#1{\mathfrak{Re}\left( #1 \right)}
\def\Im#1{\mathfrak{Im}\left( #1 \right)}
\def\tC{\tilde{\CC}}
\def\tmu{\tilde{\mu}}
\def\tRR{\widetilde{\RR}}
\def\hbnu{\hat{\bnu}}
\def\eps{\varepsilon}
\def\rmd{{\rm d}}
\DeclareMathOperator\supp{\rm supp}
\def\forall{\textrm{\ for\ all\ }}
\DeclareMathOperator\erf{erf}
\def\rhol{\tilde{\rho}_{\ell}}
\def\m{\bm{m}}
\newcommand{\hC}{\hat{\mathcal{C}}}
\def\FFd{\FF\left\{0,1\right\}}
\def\sRR{\RR_s}
\def\normT#1{\norm{#1}{L^2_{\mu}(\XX_{\R}; \U)}}
\def\normMag#1{\norm{#1}{1+\alpha/2,2+\alpha}}
\def\normRes#1{\norm{#1}{\alpha/2,\alpha}}
\def\seminormMag#1{\seminorm{#1}{1+\alpha/2,2+\alpha}}
\def\normW#1{\norm{#1}{L^q([0,T])}}
\def\normV#1{\norm{#1}{L^q([0,T], C^{\alpha}(D))}}
\def\normM0#1{\norm{#1}{C^0([0,T], C^{\alpha}(D))}}
\title{Sparse grid approximation of nonlinear SPDEs: The Landau--Lifshitz--Gilbert equation
	\thanks{
		XA was fully supported by the Australian Research Council (ARC) grant DP190101197, JD was partially funded by ARCDP190101197, ARCDP220101811, MF and AS were funded by the Deutsche Forschungsgemeinschaft (DFG, German Research Foundation) -- Project-ID 258734477 -- SFB 1173 as well as the Austrian Science Fund (FWF) under the special research program Taming complexity in PDE systems (grant SFB F65) and project I6667-N. MF and AS also received funding from the European Research Council (ERC) under the	European Union's Horizon 2020 research and innovation programme (Grant agreement No. 101125225). TT was partially supported by ARCDP190101197, ARCDP200101866, and ARCDP220101811.
	}
}
\author{Xin An
\and
Josef Dick\footnotemark[2]
\and
Michael Feischl\footnotemark[3]
\and
Andrea Scaglioni\footnotemark[4]
\and
Thanh Tran\footnotemark[2]
}
\date{\today}
\begin{document}
\raggedright

\maketitle

\renewcommand{\thefootnote}{\fnsymbol{footnote}}
\footnotetext[2]{School of Mathematics \& Statistics, UNSW, Sydney, Australia}
\footnotetext[3]{Institute of Analysis and Scientific Computing, TU Wien, Vienna, Austria}
\footnotetext[4]{Department of Mathematics, University of Vienna, Vienna, Austria. Corresponding author.\\\href{mailto:andrea.scaglioni@univie.ac.at}{andrea.scaglioni@univie.ac.at} 
}
\renewcommand{\thefootnote}{\arabic{footnote}}

\begin{abstract}
    We show convergence rates for a sparse grid approximation of the distribution of solutions of the stochastic Landau-Lifshitz-Gilbert equation. Beyond being a frequently studied equation in engineering and physics, the stochastic Landau-Lifshitz-Gilbert equation poses many interesting challenges that do not appear simultaneously in previous works on uncertainty quantification: The equation is strongly non-linear, time-dependent, and has a non-convex side constraint. Moreover, the parametrization of the stochastic noise features countably many unbounded parameters and low regularity compared to other elliptic and parabolic problems studied in uncertainty quantification. We use a novel technique to establish uniform holomorphic regularity of the parameter-to-solution map based on a Gronwall-type estimate and the implicit function theorem. 
    This method is very general and based on a set of abstract assumptions. Thus, it can be applied beyond the Landau-Lifshitz-Gilbert equation as well.
    We demonstrate numerically the feasibility of approximating with sparse grid and show a clear advantage of a multilevel sparse grid scheme. 
\end{abstract}

\noindent{ \emph{Keywords:}
	stochastic Landau-Lifshitz-Gilbert equation, 
	curse of dimensionality, 
	sparse high-dimensional approximation, 
	sparse grid interpolation, 
	stochastic collocation,
	multilevel methods
}

\vspace{1em}

\noindent{ \emph{2020 Mathematics Subject Classification:}
	35R60, 
	47H40, 
	65C30, 
	60H25, 
	60H35, 
	65M15 
}

\section{Introduction}
While the methods developed in this work are fairly general and apply to different model problems, we focus on the specific task of approximating the stochastic Landau-Lifshitz-Gilbert equation as it contains many of the difficulties one encounters in nonlinear and stochastic partial differential equations.

The Landau-Lifshitz-Gilbert (LLG) equation is a phenomenological model for the dynamic evolution of the magnetization in ferromagnetic materials. In order to capture heat fluctuations of the magnetization one considers a stochastic extension of the LLG equation driven by stochastic noise, see e.g.,~\cite{brown1963thermal, kubo1970brownian} for some of the first works devoted to the modelling of magnetic materials under thermal agitation.
Following these early works, great interest in the physics community lead to extensive research, see e.g. e.g.,~\cite{berkov2007magnetization,garcia1998langevin,kohn2005magnetic, mayergoyz2009nonlinear,Scholz2001Thermal}.

This work gives a first efficient approximation of the probability distribution of the solution of the stochastic LLG equation. 
To that end, we employ the Doss-Sussmann transform and discretize the resulting Wiener process via a Lévy-Ciesielski expansion. 
This gives a parametrized nonlinear time-dependent PDE with infinite dimensional and unbounded parameter space which we approximate with sparse grid techniques. Our main results are:
\begin{itemize}
    \item The first rigorous convergence result on approximation of a nonlinear, time-dependent parametric coefficient PDE with unbounded parameter space. Precisely, we show convergence of piecewise quadratic sparse grid interpolation for the stochastic LLG equation with  order $1/2$ independent of the number of dimension (see Theorem~\ref{th:conv_SG_C}). The result assumes that the stochastic LLG equation has uniformly H\"older (in time and space) solutions. This is the case for regular, sufficiently close to constant, initial conditions. Under some reasonable assumptions and simplifications of the stochastic input, we show order $1/2$ dimension independent convergence (see Theorem~\ref{th:conv_SG_L1}).
    \item The first result on uniform holomorphic regularity of the parameter-to-solution map for the Landau-Lifshitz-Gilbert equation. To the best of our knowledge, this is also the first uniform holomorphic regularity result for unbounded parameter spaces and strongly nonlinear and time-dependent problems.
    \item Improved convergence rate of a multilevel version of the stochastic collocation algorithm under natural assumptions on the underlying finite element method.
\end{itemize}

In order to achieve these results, we overcome challenges posed by the nonlinear nature of the problem:
\begin{itemize}
   \item Holomorphic parameter-to-solution map: 
   	This is well-understood for linear problems but technically challenging for nonlinear problems. 
	While we apply the implicit function theorem as in \cite{chkifa2015breaking}, our parameter space is not compact.
    To overcome this problem, we control the growth of the extension by means of a Gronwall-like estimate for small imaginary parts.
    The main challenge here is that there is no canonical complex version of LLG which supports holomorphy. The main reason for this is that any extension of the cross product is either not complex differentiable or loses orthogonality properties which normally ensure $L^\infty$-boundedness of solutions of the LLG equation.
    
    \item Lack of parametric regularity: 
    The mentioned works on uncertainty quantification require strong summability of the coefficients which arise in the expansion of the stochastic noise.
    Typically, $\ell^p$-summability with $p<1$ is required. 
    Despite the holomorphic regularity, the present problem is only $\ell^p$-summable for $p>2$.  
    We propose a simplification of the stochastic input which leads to $L^1$-integrable sample paths in time.
    This increases the parametric regularity and allows for dimension independent estimates.

    \item  Lack of sample path regularity: Regularity results for LLG are sparse even in the deterministic setting. We refer to~\cite{carbour3,partreg2,cimExist,Lin,partreg1,Melcher12, partreg4} for partial results in 2D and 3D. Sample path regularity directly influences holomorphic regularity via the implicit function theorem. To that end, we rely on H\"older space regularity results for the stochastic LLG equation (Theorem~\ref{th:Holder_sample_paths}).
\end{itemize}

\subsection{Related work on the numerics of the LLG equation}
The nonlinear nature of LLG combined with the stochastic noise attracted a lot of interest in numerical analysis: 
For the deterministic version of LLG, weak convergence of some time stepping schemes was known since at least 2008 (see, e.g., the midpoint scheme~\cite{BartelsProhl:2006} and the tangent-plane scheme~\cite{Alouges:2008}). It took another ten years to obtain strong a~priori convergence of uniform time stepping schemes that obey physical energy bounds, which has first been proved  in~\cite{FeischlTran:2017_fembem} and was then extended to higher-order in~\cite{Akrivis2021Highorder}.
The latter two works build on the tangent plane idea first introduced in~\cite{Alouges:2008} in order to remove the nonlinear solver required in~\cite{BartelsProhl:2006}.
This is achieved by solving for the time derivative of the magnetization instead of the magnetization itself.
 
To study the stochastic version of LLG (SLLG),~\cite{BrzezniakGoldys2013,Brzezniak2013AnisoSLLG} formulate a rigorous definition of \emph{weak martingale solution} to the SLLG equation, prove existence using the Faedo-Galerkin method and discuss regularity even with anisotropy in the effective field and for finite multidimensional noise in space.
In \cite{Brzezniak2017Large,Brzezniak2019WongZakai}, the authors study the 1D (in space) SLLG equation, which has applications in the manufacturing of nanowires. They prove existence of weak martingale solutions for the problem for a larger class of coefficients compared to previous works in 3D. They also show pathwise existence and uniqueness of strong solutions and a large deviation principle. This is used to analyze the transitions between equilibria.
The space and time approximation of the SLLG equation was considered in \cite{BanasBrzezniak2014}. The authors consider an implicit midpoint scheme that preserves the unit modulus constraint on the magnetization and satisfies relevant discrete energy estimates. With a compactness argument, they prove that the method converges almost surely and weakly to the exact solution, up to extraction of a subsequence. In the follow-up work~\cite{BanasBrzezniak2013}, the scheme reproduces physically relevant phenomena such as finite-time blow-up of the solution and thermally-activated switching.
A different approach is followed in \cite{goldysLeTran}, where the authors propose to discretize SLLG in space and time by first applying the Doss-Sussmann transform~\cite{doss,sussman} to SLLG to obtain a random coefficient PDE. They then discretize this problem using the tangent-plane scheme~\cite{Alouges:2008}
and prove convergence (again in the sense of weak convergence of a subsequence), which in particular  proves that the random coefficient LLG equation is well posed. A tangent plane scheme is also considered in~\cite{alouge2014asemidiscrete}, where the sample paths of the SPDE in It\^o form are approximated and stability and convergence results are derived.
For the approximation of multi-dimensional (finite) noise,~\cite{Goldys202Multidimensional} generalizes the approach based on the Doss-Sussmann transform.

\subsection{Related work on the approximation of PDEs with random coefficients}
Dimension independent approximation of PDEs with random coefficients has first been proposed in~\cite{Cohen2010} and the idea of using a holomorphic extension of the exact solution in order to obtain convergence rates for the parametric approximation goes back to~\cite{Cohen2011Analytic}.

The works~\cite{Xiu2005High} and~\cite{Babuska2007Stocashtic} begin the mathematical study of collocation-type schemes for random coefficient PDEs. Several extensions and improvements of certain aspects of the theory can be found in, e.g.,~\cite{Nobile2008Sparse}, which uses sparse grid interpolation to improve the dependence on the number of parametric dimensions and~\cite{nobile2008anisotropic}, which employs \emph{anisotropic} sparse grid interpolation to achieve dimension independent convergence (under tractability assumptions on the problem).
In~\cite{nobile2016convergence}, the authors select the sparse grid with a profit-maximization principle, effectively recasting the sparse grid selection problem into a Knapsack problem. They also prove an error bound with explicit dependence on the number of approximated dimensions. In this work, we use the same principle to build sparse grid interpolation operators and apply the framework to prove dimension independent convergence.

In \cite{Zhang2012Error}, the authors extend the methodology developed in \cite{Babuska2007Stocashtic} to a linear parabolic problem with random coefficients under the finite-dimensional noise assumption. 
They prove existence of a holomorphic extension and, based on the ideas in~\cite{Babuska2007Stocashtic}, show that this leads to convergence of stochastic collocation schemes for both a space semi-discrete and fully discrete approximations.
In \cite{Nobile2009Parabolic}, the authors study a linear parabolic problem with uncertain diffusion coefficient under the finite dimensional noise assumption. They prove existence of a holomorphic extension by extending the problem to complex parameters and verifying the Cauchy-Riemann equations. They study convergence of stochastic Galerkin and stochastic collocation approximation. 
In \cite{Ganapathysubramanian2007}, the authors consider a coupled Navier-Stokes and heat equation problem with uncertainty and use a heuristic adaptive sparse grid scheme based on~\cite{GerstnerGriebel2003}.

In order to discretize the Wiener process in the stochastic LLG equation, one needs to deal with unbounded parameter spaces.
This has been done in, e.g.,~\cite{Bachmayr2017Sparse}, where the authors study the Poisson problem with lognormal diffusion and establish summability results for Hermite coefficients based on \emph{local-in-space} summability of the basis used to expand the logarithm of the diffusion.
In \cite{Ernst2018Convergence}, the authors approximate functions with this property by means of sparse grid interpolation built using global polynomials with Gauss-Hermite interpolation nodes. They prove algebraic and dimension independent convergence rates.

In the monograph~\cite{dung2023analyticity}, the authors study the regularity of  a large class of problems depending on Gaussian random field inputs as well as the convergence of several numerical schemes. Several examples of PDEs with Gaussian random coefficients are given e.g. elliptic and parabolic PDEs with lognormal diffusion.
The regularity result implies estimates on the Hermite coefficients of the parameter-to-solution map. These, in turn, can be used to study the convergence of Smolyak-Hermite interpolation and quadrature among other numerical methods.

Beyond linear problems, in~\cite{chkifa2015breaking} the authors deal with infinite-dimensional parametric problems with compact coefficient spaces, but possibly non-affine parametric dependence. They prove the existence of a holomorphic extension of the coefficient-to-solution map without extending the problem to the complex domain (as is usually done for the random Poisson problem). Rather, they employ the implicit function theorem.
In \cite{Cohen2018Stokes}, the authors use similar techniques in the setting of the stationary Navier-Stokes equation with random domain.

\revision{
There is a large body of literature on approximate \emph{quadrature} for SPDEs. Classical problems in this setting include approximating moments and probabilities of events. 
Semilinear parabolic SPDEs and their numerical approximation are treated e.g. in~\cite[Chapter 10]{Lord2014Introduction}.
As for the SLLG equation, Monte Carlo quadrature is the standard method used to approximate integrals, see e.g.~\cite{Banas2014Stochastic}.
The Multilevel Monte Carlo method is a popular and effective way to accelerate Monte Carlo quadrature of SPDEs (as well as a number of other high-dimensional approximation methods).
See~\cite{Giles2015Multilevel} for a comprehensive review and~\cite{Giles2012Stochastic, Belomestny2013multilevel} for applications in finance.
Approximate quadrature of random coefficient PDEs is also a widely studied topic~\cite{Cliffe2011, Teckentrup2013, Luo2019Multilevel} even with quasi-Monte Carlo methods~\cite{Herrmann2019Multilevel, Kuo2015Multi} and stochastic collocation~\cite{teckentrup2015multilevel}.
The convergence of Monte Carlo and multilevel Monte Carlo for weak error simulation of SPDEs is discussed in~\cite{Lang2018Monte}.
}

\subsection{Structure of the work} 
In Section \ref{sec:proofstrategy} we introduce a general framework for the study of the parametric regularity of solutions of SPDEs. We first explain in Section~\ref{sec:from_SPDE_to_paramPDE_general} how to reduce a SPDE to a parametric coefficients PDE. Then, in Sections~\ref{sec:param_regularity_general} and \ref{sec:uniform_extension_general} we prove that the parameter-to-solution map admits a sparse holomorphic extension. The result is based on four main assumptions that have to be proved for each concrete problem.  Finally, we estimate the derivatives of the parameter-to-solution map with Cauchy's integral theorem.\\
We introduce the stochastic version of the LLG equation in Section~\ref{sec:sllg}, and, following the general strategy from Section \ref{sec:from_SPDE_to_paramPDE_general}, transform it into a parametric nonlinear and time-dependent PDE in Section~\ref{sec:rnd_LLG_by_Doss_Sussmann}.
In the same section, we prove that the solution's sample paths are Hölder-continuous under regularity assumption on the problem data and uniformly bounded  with respect to the Wiener process sample paths.\\
In Section~\ref{sec:holomoprhic_regularity_Holder}, we apply the regularity analysis from Sections \ref{sec:param_regularity_general} and \ref{sec:uniform_extension_general} to the parametric LLG equation and prove that the parameter-to-solution map is holomorphic under the assumptions that sample paths of random coefficients and solutions are Hölder continuous.\\
In Section~\ref{sec:holomorphic_regularity_L1_small}, we do the same for a simplified version of the parametric LLG equation obtained with additional modelling assumptions. This time, sample paths are assumed to be Lebesgue integrable in time.\\
The sparsity properties of the parameter-to-solution map in the Hölder setting are weaker than in the Lebesgue setting. This is reflected by the convergence of sparse grid interpolation discussed in Section~\ref{sec:SG}.  The results are confirmed by numerical experiments.\\
The final Section~\ref{sec:ML} derives the multilevel version of the stochastic collocation method and provides numerical tests.

\section{General approach to deriving parametric regularity of a SPDE}\label{sec:proofstrategy}
In this section, we outline a fairly general strategy to prove a regularity property of solutions of stochastic partial differential equations (SPDE) driven by the Wiener process.
The resulting regularity properties can be used to tailor sparse grid approximation methods to the problem.
\revision{The problem formulation and arguments presented in this section are formal and need respectively to be rigorously defined and verified for each concrete problem}. The most important assumptions are listed explicitly below.

\subsection{Reduction to a parametric problem}\label{sec:from_SPDE_to_paramPDE_general}
Consider a spatial domain $D\subset \R^d$ of dimension $d\in\N$ and a final time $T>0$. Denote by $\partial D$ the boundary and by $\partial_n$ the unit exterior normal derivative. The space-time cylinder is denoted by $D_T\coloneqq [0,T]\times D$. 
Consider the initial condition $U^0:D\rightarrow \R^m$ for $m\in\N$, a drift coefficient $\fD : \R^m\times [0,T]\times D\rightarrow \R^m$ and a noise coefficient $\fN :\R^m\times D\rightarrow \R^m$. While a more general noise coefficient can be treated with analogous techniques, we consider this simple case as it is sufficient for the examples below. Given the probability space $(\Omega, \EE, \mathbb{P})$, we consider the SPDE problem:
Find a random field $U:\Omega\times D_T\rightarrow \R^m$ such that, $\P$-a.s.
\begin{align*}
\begin{cases}
\rmd U = \fD(U, t, \bx)\rmd t + \fN(U,\bx) \circ \rmd W(t)\qquad &\textrm{on } D_T\\
\partial_n U =0 \qquad &\textrm{on } [0,T] \times \partial D\\
U(\cdot, 0, \cdot) = U^0 \qquad &\textrm{on } D,
\end{cases}
\end{align*}
where by $\circ \rmd W(t)$ we denote the Stratonovich differential applied to a Wiener process $W$.

The \emph{Doss-Sussmann transform}~\cite{doss,sussman} of $U$ is, by definition,
\begin{align}\label{eq:def_Doss_Sussmann}
u = e^{-W \fN} U,
\end{align}
i.e. the exponential of the operator $-W \fN$ applied to $U$. The resulting random field $u:\Omega\times[0,T]\times D\rightarrow \R^m$ solves a \emph{random coefficient partial differential equation} (PDE):
\begin{align}\label{eq:rndPDE_general}
\RR(W(\omega),u(\omega)) = 0\qquad \textrm{in } R,\ \P\textrm{-a.e. } \omega\in \Omega.
\end{align}
The \emph{residual operator} $\RR: \W_{\R}\times \U_{\R}\rightarrow R$ is defined for Banach spaces $\W_{\R}, \U_{\R}$ and $R$.
In general, it is a differential operator in time and space with respect to $u\in\U_{\R}$ while it does not contain It\^o or Stratonovich differentials of $W$.

In order to make the distribution of $u$ amenable to \emph{approximation}, we need to parametrize the Brownian motion. It turns out that a local wavelet-type expansion of $W$ is very beneficial as it reduces the number of active basis function at any given moment in time \revision{(used e.g. to prove the $L^q(0,T)$-summability of appropriate complex extensions of the Wiener process sample paths; see the proof of Lemma~\ref{lemma:Lp_paths_W} below).}
The Lévy-Ciesielski expansion (LCE) (see e.g. \cite[Section 4.2]{griebel2017anova}) of the Brownian motion $W:\Omega\times [0,1] \rightarrow \R$ reads
\begin{align}\label{eq:def_LCE}
	W(\omega, t) 
	= \sum_{\ell=0}^{\infty} \sum_{j=1}^{\lceil 2^{\ell-1}\rceil} Y_{\ell,j}(\omega)\eta_{\ell,j}(t),
\end{align}
where $Y_{\ell,j}$ are independent standard normal random variables and \\ 
$\set{\eta_{\ell,j}}{ \ell\in\N_0, j=1,\dots, \lceil2^{\ell-1}\rceil}$ is the \emph{Faber-Schauder} hat-function basis on $[0,1]$, i.e., 
\begin{align}\label{eq:faber-shauder_basis}
\begin{split}
& \eta_{0,1} (t) = t,\\
& \eta_{\ell,j} (t) = 2^{-\frac{\ell-1}{2}} \eta\left(2^{\ell-1}t - j + 1\right) \qquad \forall \ell\in\N,\ j = 1,\dots, 2^{\ell-1},
\end{split}
\end{align}
where $\eta(t) \coloneqq 
\begin{cases}
	t\qquad t\in[0,\frac{1}{2}]\\
	1-t\qquad t\in[\frac{1}{2},1]\\
	0\qquad {\rm otherwise},
\end{cases}.$
Observe that $\norm{\eta_{0,1}}{L^{\infty}(0,1)} = 1$, $\supp\eta_{0,1}=(0,1]$ and  
$\norm{\eta_{\ell,j}}{L^{\infty}(0,1)} = 2^{-(\ell+1)/2}$,
$\supp{\eta_{\ell,j}} = \left(\frac{j-1}{2^{\ell-1}},\frac{j}{2^{\ell-1}}\right)$
for all $ \ell\in\N, j = 1,\dots, 2^{\ell-1}$.
The LCE converges uniformly in $t$, almost surely to a continuous function which coincides with the Brownian motion everywhere in $[0,1]$ (see \cite[Section 3.4]{steele2001Stochastic}).
\revision{We consider a parametric version of the random field $W$ in the form }
$W:\R^{\N}\times [0,1]\rightarrow \R$ so that
\begin{align}\label{eq:def_param_LC}
W(\bm{y}, t) = \sum_{\ell=0}^{\infty} \sum_{j=1}^{\lceil 2^{\ell-1}\rceil} y_{\ell,j} \eta_{\ell,j}(t),
\end{align}
where $y_{\ell,j}\in \R$ for all $\ell\in \N_0, j = 1,\dots,\lceil2^{\ell-1}\rceil$. 
For $L\in\N_0$, we define the \emph{level-$L$ truncation of $W$} by 
$ W_L(\by, t) = \sum_{\ell=0}^{L} \sum_{j=1}^{\lceil 2^{\ell-1}\rceil} y_{\ell,j}\eta_{\ell,j}(t). $
We will sometimes also index the same sum as $W_L(\by, t) = \sum_{n=0}^N  y_n\eta_n(t)$. The two indexing systems, hierarchical and linear, are related via
\begin{align}\label{eq:re-indexing_LC}
\eta_{\ell, j} = \eta_n \quad  \Longleftrightarrow \quad n = \lfloor 2^{\ell-1} \rfloor + j-1.
\end{align}
We note that the total number of parameters is $N = \sum_{\ell=0}^L \lceil 2^{\ell-1}\rceil = 1 + \sum_{\ell=1}^{L} 2^{\ell-1} = 2^L$.

The fact that the parameter domain is unbounded requires the use of appropriate collocation nodes, a topic we treat in Section \ref{sec:SG}, below.

We denote by $\XX_{\R}$ an appropriate \revision{separable Banach} space of real sequences such that if $\by\in\XX_{\R}$, then $W(\by, \cdot)$ belongs to a desired Banach space \revision{$\W$} of functions. 
\revision{
The Banach space $\XX_\R$ is assumed to be separable in order for $u:\XX_\R \rightarrow \U$ to be separably valued (i.e. its image $u(\XX_\R)$ be separable) under the mild regularity assumption that $u$ is continuous.
As a consequence of Pettis measurably theorem, the parameter-to-solution map is also measurable.
Measurability is a naturally important property because it is necessary for the well posedness of integral quantities such as the moments of the random field.
}

\begin{example} 
Consider the Banach space of sequences:
\begin{align*}
    \XX_{\R}\coloneqq \set{\by = (y_n)_{n\in\N} \in \R^{\N}}{\norm{\by}{\XX_{\R}} < \infty},\ \norm{\by}{\XX_{\R}} \coloneqq \seminorm{y_{0,1}}{} + \sum_{\ell \in \N} \max_{j=1,\dots, 2^{\ell-1}} \seminorm{y_{\ell, j} }{} 2^{-(\ell+1)/2}.
\end{align*}
Simple computations show that if $\by\in\XX_{\R}$, then $\norm{W(\by)}{L^\infty(0,T)}\leq \norm{\by}{\XX_{\R}}$, thus $\W \subset L^\infty(0,T)$.
\end{example}

Assume without loss of generality that $T=1$. By substituting the random field $W(\omega, t)$ in the random coefficient PDE \eqref{eq:rndPDE_general} with the parametric expansion \eqref{eq:def_param_LC}, we obtain a \emph{parametric coefficient PDE}:
Find $u : \XX_{\R}\times D_T\rightarrow \R^m$ such that
\begin{align}\label{eq:parametric_problem}
\RR(W(\by), u(\by)) = 0\qquad \textrm{in } R, \ \bmu\textrm{-a.e. } \by\in\XX_{\R},
\end{align}
\revision{where $\bmu$ denotes the standard Gaussian measure on $\R^{\N}$, i.e. the product measure $\bmu \coloneqq \bigotimes_{n\in\N} \mu_n$, where $(\mu_n)_{n\in\N}$ is a sequence of standard Gaussian probability measures on $\R$ (see, e.g.,~\cite{Kakutani1948On} and~\cite[Section 2.4]{dung2023analyticity} for details on infinite product measures).}

\subsection{Holomorphic regularity of the solution operator} \label{sec:param_regularity_general}
While holomorphic parameter regularity of random elliptic equations is well-known by now (see, e.g.,
\cite[Section 3]{Babuska2007Stocashtic}, for the case of bounded or unbounded parameter spaces under the finite dimensional noise assumption, \cite{Cohen2011Analytic}, for countably-many parameters taking values on tensor product of bounded intervals, \cite{Bachmayr2017Sparse}, for a discussion of the Poisson problem with lognormal coefficients, in which the authors study countably many unbounded parameters), the literature is much sparser for nonlinear and time-dependent problems. In this section, we follow an approach from~\cite{chkifa2015breaking} which uses the implicit function theorem to obtain analyticity. While the authors in~\cite{chkifa2015breaking} can rely on a compact parameter domain to ensure a non-trivial domain of extension, we have to use intricate bounds on the parametric gradient of the solution. A recent result on the implicit function theorem for Gevrey regularity~\cite{schwabGevrey} could also be used to achieve similar results in a less explicit fashion.

We require some assumptions to work in a  more general setting.
\begin{assumption} \label{assume:uniform_bounded_solution}
For any $\by\in\XX_{\R}$ there exists $u(\by)\in \U_{\R}$ such that $\RR(W(\by), u(\by))=0$ in $R$. Moreover, there exists $C_r>0$ such that, for any $\by\in\XX_{\R}$, $\norm{u(\by)}{\U_{\R}} \leq C_r $.
\end{assumption}
\begin{assumption}\label{assume:assumptions_IFTh}
The residual operator $\RR:\W_{\R}\times\U_{\R}\to R$ admits an extension to \emph{complex} Banach spaces $\W \supset \W_{\R}$ and $\U\supset \U_{\R}$. The extended map $\RR: \W \times \U \rightarrow R$ satisfies the following properties:
\begin{enumerate}
\renewcommand{\labelenumi}{\theenumi}
\renewcommand{\theenumi}{{\rm (\roman{enumi})}}
\item $\RR$ is \revision{Fréchet} continuously differentiable;
\item $\partial_u \RR(W, u): \U\rightarrow R$ is a homeomorphism for all $(W,u)\in\W_{\R}\times\U_{\R}$ so that $\RR(W,u)=0$.
\end{enumerate}
\end{assumption}
With this complex extension in mind, in the following for any $W_0\in\W_{\R}$ and $u_0\in\U_{\R}$ we denote, for $\varrho>0$,
\begin{equation}
\label{eq:ball def}
\begin{aligned}
    B_{\varrho}(W_0) &:= \big\{W\in\W : \norm{W-W_0}{\W} < \varrho \big\},
    \\
    B_{\varrho}(u_0) &:= \big\{u\in\U : \norm{u-u_0}{\U} < \varrho \big\}.
\end{aligned}
\end{equation}

We recall the implicit function theorem for maps between Banach spaces (see, e.g.,~\cite[Theorem 10.2.1]{dieudonne1960treatise}).
\begin{theorem}[Implicit function]\label{th:implicit_fun_th}
Let $E,F,G$ be Banach spaces, 
$A\subset E\times F$
and $f:A\rightarrow G$ be a \revision{Fréchet} continuously differentiable function.
Let $(x_*, y_*)\in A$ be such that $f(x_*, y_*)=0$
and the partial derivative $D_2f(x_*, y_*)$ is a linear homeomorphism from $F$ onto $G$.
Then, there exists a neighborhood $U_*$ of $x_*$ in $E$ such that, for every open connected neighborhood $U$ of $x_*$ in $U_*$, there exists a unique continuous mapping $\UU:U\rightarrow F$ such that 
$\UU(x_*) = y_*$,
$(x, \UU(x))\in A$ and $f(x, \UU(x)) = 0$ for any $x$ in $U$.
Moreover, $\UU$ is continuously differentiable in $U$ and its derivative is given by
\begin{align}\label{eq:formula_dU_ifth}
\UU'(x) = - \left(D_2f(x, \UU(x))\right)^{-1}\circ \left(D_1f(x, \UU(x))\right) \quad \forall x\in U.H
\end{align}
\end{theorem}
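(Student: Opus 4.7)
The plan is to reduce the implicit equation $f(x,y)=0$ to a parametrized fixed-point problem and apply the Banach contraction mapping principle. Set $L := D_2 f(x_*, y_*)$, which is a linear homeomorphism $F \to G$ by hypothesis, and define
\begin{align*}
T(x,y) := y - L^{-1} f(x,y).
\end{align*}
Fixed points of $y \mapsto T(x,y)$ correspond exactly to zeros of $f(x,\cdot)$, and $T(x_*, y_*) = y_*$. The key observation is that $\partial_y T(x,y) = I_F - L^{-1} D_2 f(x,y)$ vanishes at $(x_*, y_*)$, so continuous differentiability of $f$ furnishes some $r>0$ and a neighbourhood $U_*$ of $x_*$ on which
\begin{align*}
\sup_{(x,y) \in U_* \times \overline{B_r(y_*)}} \norm{\partial_y T(x,y)}{F\to F} \le \tfrac{1}{2}.
\end{align*}

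I would then shrink $U_*$ further to obtain invariance: using $\norm{T(x,y_*) - y_*}{F} = \norm{L^{-1} f(x,y_*)}{F}$ together with continuity of $x \mapsto f(x,y_*)$ at $x_*$ (where $f(x_*, y_*)=0$), one arranges $\norm{T(x, y_*) - y_*}{F} \le r/2$ for all $x \in U_*$. Combined with the $\tfrac12$-Lipschitz bound on $T(x,\cdot)$, the closed ball $\overline{B_r(y_*)}$ is stable under $T(x,\cdot)$ for each $x \in U_*$, and Banach's fixed point theorem supplies a unique $\UU(x) \in \overline{B_r(y_*)}$ with $f(x, \UU(x))=0$ and $\UU(x_*) = y_*$. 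Continuity of $\UU$ on $U_*$ follows from the parameter-dependent fixed-point estimate
\begin{align*}
\norm{\UU(x) - \UU(x')}{F} \le 2\, \norm{L^{-1}}{G\to F}\, \norm{f(x, \UU(x')) - f(x', \UU(x'))}{G}.
\end{align*}

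For differentiability, since $D_2 f$ is continuous and invertible at $(x_*, y_*)$, a Neumann-series perturbation argument shows that, after possibly shrinking $U_*$ once more, $D_2 f(x, \UU(x))$ is invertible with uniformly bounded inverse for all $x \in U_*$. Expanding
\begin{align*}
0 = f(x+h, \UU(x+h)) - f(x, \UU(x))
\end{align*}
to first order at $(x,\UU(x))$ and using continuity of $\UU$ to control the remainder then yields
\begin{align*}
\UU(x+h) - \UU(x) = -\big[D_2 f(x,\UU(x))\big]^{-1} D_1 f(x,\UU(x))\, h + o(\norm{h}{E}),
\end{align*}
which simultaneously proves differentiability and identifies the derivative as in~\eqref{eq:formula_dU_ifth}; continuous differentiability then follows from continuity of $\UU$ and of the partial derivatives of $f$.

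To upgrade the local uniqueness to uniqueness of any continuous $\mathcal{V}: U \to F$ with $\mathcal{V}(x_*)=y_*$ and $f(x,\mathcal{V}(x))=0$ on an arbitrary open connected neighbourhood $U \subset U_*$, I would use a clopen argument: the coincidence set $\{x \in U : \mathcal{V}(x) = \UU(x)\}$ is nonempty, closed by continuity, and open by the local uniqueness inside the small ball around each of its points, hence equals $U$ by connectedness. The main obstacle is the careful coordination of the two nested shrinkings of $U_*$---one securing the contraction estimate, the other the invariance of $\overline{B_r(y_*)}$---together with the Neumann-series step needed to invert $D_2 f(x, \UU(x))$ away from $x_*$; once these radii are consistently chosen, the remaining arguments reduce to routine calculus on Banach spaces.
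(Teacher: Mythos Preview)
The paper does not actually prove this theorem; it is merely quoted as a classical result from Dieudonn\'e's treatise~\cite[Theorem 10.2.1]{dieudonne1960treatise}. Your contraction-mapping argument via $T(x,y)=y-L^{-1}f(x,y)$ is correct and is in fact the standard proof (and essentially the one Dieudonn\'e gives), so there is nothing to compare beyond noting that you have supplied what the paper deliberately omits.
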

Invoking Theorem~\ref{th:implicit_fun_th} for the operator $\RR : \W \times \U \rightarrow R$, with $\by\in\XX_{\R}$ and $u(\by)\in\U_{\R}$ satisfying $\RR(W(\by),u(\by))=0$, there exists $\eps(\by)>0$ and a holomorphic map $\UU:B_{\eps(\by)}(W(\by))\rightarrow \U$ such that
$\UU(W(\by)) = u(\by)$ and $\RR(W, \UU(W))=0$ for all $W\in B_{\eps(\by)}(W(\by))$.

For any $W\in B_{\eps(\by)}(W(\by))$, the differential $\UU'(W)$ belongs to $\LL(\W,\U)$, the set of linear bounded operator from $\W$ into $\U$ equipped with the usual norm.

Recalling definition~\eqref{eq:ball def}, we make additional assumptions on the regularity of the derivatives of the residual operator $\RR$.
\begin{assumption}\label{assume:bounds_d1R_d2Rinv}
There exist $\eps_W,\eps_u>0$ such that for any $\by\in\XX_{\R}$ and any $W\in B_{\eps_W}(W(\by))$ with $\UU(W)\in B_{\eps_u}(\UU(W(\by)))$, the operator $\partial_W \RR(W, \UU(W))$ is well-defined and $\partial_u \RR(W, \UU(W))$ is homeomorphic with
\begin{align*}
\norm{\partial_W \RR(W, \UU(W))}{\LL(\W, R)} \leq\GG_1(\norm{\UU(W)}{\U}),\\
\norm{\partial_u \RR(W, \UU(W))^{-1}}{\LL(R, \U)} \leq \GG_2(\norm{\UU(W)}{\U}),
\end{align*}
where the functions $\GG_1, \GG_2$ are continuous and may depend on problem coefficients and $\eps_u$, $\eps_W$ but depend on $W$ and $\UU(W)$ only through $\norm{\UU(W)}{\U}$ and are independent of $\by$.
\end{assumption}
Together with \eqref{eq:formula_dU_ifth} from Theorem \ref{th:implicit_fun_th}, this assumption implies the existence of a continuous increasing function ${\GG=\GG(\norm{\UU(W)}{\U})>0}$ such that
\begin{align}\label{eq:estim_norm_DU}
\norm{\UU'(W)}{\LL(\W,\U)} \leq \GG(\norm{\UU(W)}{\U})\qquad \forall W \in B_{\min(\eps(\by), \eps_W)}(W(\by)).
\end{align}

\subsection{Uniform holomorphic extension of solution operator}\label{sec:uniform_extension_general}
Since we cannot rely on a compact parameter domain, we show existence of a uniformly bounded holomorphic extension through the application of a generalized version of Gronwall's lemma.

Fix $\by\in\XX_{\R}$. 
We can assume, without loss of generality, that $\eps(\by) \leq \eps_W$.
\begin{definition}\label{def:H} 
We consider an open set $\HH(\by)\subseteq B_{\eps_W}(W(\by))$ with the following properties:
\begin{itemize}
    \item $B_{\eps(\by)}(W(\by))\subseteq \HH(\by)$, 
    \item $\UU(W)\in B_{\eps_u}(\UU(W(\by)))$ for all $W\in \HH(\by)$,
    \item the solution operator $\UU : B_{\eps(\by)}(W(\by))\to\U$ extends holomorphically to $\HH(\by)$,    
    \item for all $W\in \HH(\by)$ we have $\sigma W +(1-\sigma) W(\by)\in\HH(\by)$ for all $0\leq \sigma\leq 1$.
\end{itemize}
\end{definition}

In contrast to \cite{chkifa2015breaking}, this domain of real parameters $\W_{\R}$ may not be compact.
Therefore, $\eps(\by)$ can be arbitrarily small and hence $\HH(\by)$ might become very small for certain parameters $\by$. The goal of the arguments below is to show that there exists $\eps>0$ such that for all $\by\in\XX_\R$ $\HH(\by)=B_\eps(W(\by))$ is a valid choice.
Instead of relying on compactness, we exploit estimate~\eqref{eq:estim_norm_DU} through the following nonlinear generalization of Gronwall's lemma:

\begin{lemma}[\cite{dragomir2003applications}, Theorem 27]\label{lem:Gronwall_nonlin}
Let $0\leq c\leq d<\infty$, $\varphi:[c,d]\rightarrow \R$ and $k:[c,d]\rightarrow \R$ be positive continuous functions on $[c,d]$ and let $a,b$ be non-negative constants.
Further, let $\GG:[0,\infty)\rightarrow \R$ be a positive non-decreasing function. If
\begin{align*}
	\varphi(t) \leq a+ b \int_c^t k(s) \GG(\varphi(s)) \rmd s\quad \forall t\in [c,d],
\end{align*}
then
\begin{align*}
	\varphi(t) \leq G^{-1}\left(G(a) + b \int_c^t k(s)\rmd s\right)\quad \forall c\leq t\leq d_1\leq d
\end{align*}
where \revision{$G$ is defined, with some fixed $\xi>0$, by}
\begin{align}\label{eq:def_G}
    \revision{G}(\lambda) \coloneqq \int_{\xi}^\lambda \frac{\rmd s}{\GG(s)}\qquad \revision{\forall \lambda > \xi} 
\end{align}
and $d_1 $ is defined such that $G(a)+b\int_c^tk(s)\rmd s$ belongs to the domain of $G^{-1}$ for $t\in[c, d_1]$.
\end{lemma}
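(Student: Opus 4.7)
The plan is to reduce the integral inequality for $\varphi$ to a differential inequality for its majorant and then separate variables using the fact that $G$, as defined in~\eqref{eq:def_G}, is (by construction) an antiderivative of $1/\GG$.

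First I would introduce
\begin{align*}
\Phi(t) \coloneqq a + b\int_c^t k(s)\,\GG(\varphi(s))\,\rmd s,\qquad t\in[c,d],
\end{align*}
so that $\varphi(t)\leq \Phi(t)$ by hypothesis, $\Phi(c)=a$, and, by continuity of $k$ and $\varphi$, $\Phi$ is continuously differentiable with $\Phi'(t)=b\,k(t)\,\GG(\varphi(t))$. Since $\GG$ is non-decreasing and $\varphi\leq \Phi$, this yields the differential inequality
\begin{align*}
\Phi'(t)\leq b\,k(t)\,\GG(\Phi(t)), \qquad t\in[c,d].
\end{align*}

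Next I would separate variables. Because $\GG$ is positive, we may divide by $\GG(\Phi(t))$ and integrate from $c$ to $t$, which, together with the definition~\eqref{eq:def_G} of $G$ as an antiderivative of $1/\GG$ and the chain rule $\tfrac{\rmd}{\rmd t}G(\Phi(t)) = \Phi'(t)/\GG(\Phi(t))$, gives
\begin{align*}
G(\Phi(t)) - G(a) = \int_c^t \frac{\Phi'(s)}{\GG(\Phi(s))}\,\rmd s \leq b\int_c^t k(s)\,\rmd s.
\end{align*}
Since $1/\GG>0$, the function $G$ is strictly increasing on its domain, so $G^{-1}$ is well-defined and increasing on its range. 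For $t\in[c,d_1]$ the argument $G(a)+b\int_c^t k(s)\,\rmd s$ lies in the domain of $G^{-1}$ by the very definition of $d_1$, so I can apply $G^{-1}$ to both sides to obtain
\begin{align*}
\Phi(t)\leq G^{-1}\!\Bigl(G(a)+b\int_c^t k(s)\,\rmd s\Bigr),
\end{align*}
and then use $\varphi\leq\Phi$ to conclude.

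The main obstacle I anticipate is the degenerate case $a=0$ (or, more generally, the possibility that $G(a)=-\infty$ depending on the reference point $\xi$ and the behaviour of $1/\GG$ near $0$), since then the starting point of the integration in the $G$-variable is singular. I would handle this by a standard perturbation argument: replace $a$ with $a_\eps = a+\eps$ for $\eps>0$, repeat the argument above with $\varphi$ satisfying the strictly larger integral inequality with constant $a_\eps$, and pass to the limit $\eps\to 0^+$ using the continuity of $G^{-1}$ on its range. A secondary technical point is verifying strict monotonicity of $\Phi$ (or, failing that, invoking the chain rule only on intervals where the inequality is non-degenerate) so that the chain-rule step above is rigorous; this is automatic because $k, \GG(\varphi)>0$ imply $\Phi'>0$.
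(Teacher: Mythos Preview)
The paper does not supply its own proof of this lemma; it is quoted verbatim as Theorem~27 from~\cite{dragomir2003applications} and used as a black box in the proof of Theorem~\ref{th:global_holo_ext}. Your argument is the standard Bihari--LaSalle proof (majorant $\Phi$, differential inequality $\Phi'\leq b\,k\,\GG(\Phi)$, separation of variables via the antiderivative $G$ of $1/\GG$, then invert), and it is correct, including the perturbation $a\mapsto a+\eps$ to handle the degenerate endpoint.
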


\begin{theorem}\label{th:global_holo_ext}
Assume the validity of Assumptions \ref{assume:uniform_bounded_solution}, \ref{assume:assumptions_IFTh}, and~\ref{assume:bounds_d1R_d2Rinv}. With $C_r>0$ given in Assumption \ref{assume:uniform_bounded_solution}, choose $0<\eps < \eps_W$ such that $G\left(C_r\right) + \eps$ belongs to the domain of $G^{-1}$ (where $G$ is defined in \eqref{eq:def_G} with the corresponding $\GG$ given in \eqref{eq:estim_norm_DU}).
Then, $\eps$ is independent of $\by$ and $\HH(\by)$ from Definition~\ref{def:H} can be chosen as $\HH(\by)= B_\eps(W(\by))$ for all $\by\in\XX_{\R}$. Moreover, $\UU$ is uniformly bounded on $B_{\eps}(W(\by))$ by a constant $C_{\eps}>0$ that depends only on $\eps$.
\end{theorem}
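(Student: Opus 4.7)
The plan is to fix an arbitrary $\by\in\XX_{\R}$ and, for every $W\in B_{\eps}(W(\by))$, to extend the local holomorphic branch of $\UU$ supplied by Theorem~\ref{th:implicit_fun_th} all the way along the straight segment $\gamma(t)=W(\by)+t(W-W(\by))$, $t\in[0,1]$. Independence of $\eps$ from $\by$ will be automatic, since the quantities $C_r$, $\eps_W$, $\eps_u$ and the function $\GG$ appearing in Assumptions~\ref{assume:uniform_bounded_solution}--\ref{assume:bounds_d1R_d2Rinv} are all $\by$-independent by construction. The continuation along $\gamma$ will be controlled by a nonlinear Gronwall argument based on Lemma~\ref{lem:Gronwall_nonlin}.

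The main step is a bootstrap. I would introduce the continuation time $T:=\sup\{t\in[0,1]:\UU\text{ extends holomorphically on a neighbourhood of }\gamma([0,t])\text{ with }\gamma(s)\in B_{\eps_W}(W(\by))\text{ and }\UU(\gamma(s))\in B_{\eps_u}(\UU(W(\by)))\text{ for all }0\le s\le t\}$, which is strictly positive by Theorem~\ref{th:implicit_fun_th} applied at $(W(\by),u(\by))$. On $[0,T)$, combining the chain rule with the derivative bound~\eqref{eq:estim_norm_DU} and $\|W-W(\by)\|_\W\le\eps$ yields $\|\UU(\gamma(t))\|_{\U}\le C_r+\eps\int_0^t\GG(\|\UU(\gamma(s))\|_{\U})\,ds$, where I used $\|\UU(W(\by))\|_{\U}=\|u(\by)\|_{\U}\le C_r$ from Assumption~\ref{assume:uniform_bounded_solution}. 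Lemma~\ref{lem:Gronwall_nonlin} with $a=C_r$, $b=\eps$, $k\equiv 1$ then delivers the uniform a priori bound $\|\UU(\gamma(t))\|_{\U}\le G^{-1}(G(C_r)+\eps t)\le G^{-1}(G(C_r)+\eps)=:C_{\eps}$ on $[0,T)$, and the right-hand side is well-defined precisely by the hypothesis on $\eps$.

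Next I would show $T=1$ by contradiction. If $T<1$, the bound $\|\UU(\gamma(t))\|_{\U}\le C_\eps$ and continuity of $\UU$ along the segment allow passing to the limit so that $\RR(\gamma(T),\UU(\gamma(T)))=0$. Since the bounds in Assumption~\ref{assume:bounds_d1R_d2Rinv} depend on $\UU$ only through its norm, $\partial_u\RR(\gamma(T),\UU(\gamma(T)))$ is still a homeomorphism, and a fresh application of Theorem~\ref{th:implicit_fun_th} produces a holomorphic branch on a neighbourhood past $\gamma(T)$ which, by uniqueness of analytic continuation, agrees with the existing one on overlaps. This contradicts the maximality of $T$ unless the side constraint $\UU(\gamma(s))\in B_{\eps_u}(\UU(W(\by)))$ has just failed. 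I expect this last possibility to be the main technical obstacle: it forces tightening $\eps$ so that $\eps\,\GG(C_\eps)<\eps_u$; the analogous Gronwall estimate applied to $\|\UU(\gamma(t))-\UU(W(\by))\|_{\U}\le\eps\int_0^t\GG(\|\UU(\gamma(s))\|_{\U})\,ds$ then keeps the $\eps_u$-constraint strict throughout $[0,T]$. Because both $C_\eps$ and $\GG$ are $\by$-independent, this refined choice of $\eps$ remains uniform in $\by$.

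With $T=1$ established for every $W\in B_{\eps}(W(\by))$, the final task is to glue the local IFT-branches obtained at each point of every such segment into a single holomorphic map on the whole ball: uniqueness of holomorphic extension across overlapping open balls in $\W$ patches them into one $\UU:B_\eps(W(\by))\to\U$ satisfying $\|\UU(W)\|_{\U}\le C_\eps$ and $\UU(W)\in B_{\eps_u}(\UU(W(\by)))$. This verifies the four properties of Definition~\ref{def:H} for $\HH(\by)=B_\eps(W(\by))$ and delivers the uniform bound asserted in the theorem.
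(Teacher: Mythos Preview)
Your proposal is correct and follows essentially the same approach as the paper: continuation of the local IFT branch along straight segments from $W(\by)$, a nonlinear Gronwall bound via Lemma~\ref{lem:Gronwall_nonlin} to obtain the uniform estimate $\|\UU\|_{\U}\le C_\eps$, and a contradiction argument using the IFT at the boundary point to rule out premature termination, with the $\eps_u$-constraint handled by further shrinking $\eps$. The paper packages the same argument slightly differently---working with a maximal star-shaped set $\HH(\by)$ rather than a segment-by-segment continuation time---and makes the Lipschitz continuity of $t\mapsto\UU(\gamma(t))$ (needed to pass to the limit at the endpoint) an explicit intermediate step, whereas you implicitly rely on the derivative bound for this; otherwise the arguments coincide.
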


\begin{proof}
Fix $\by\in\XX_{\R}$.

{\em Step~1}: We first show that $\UU$ is uniformly bounded on $\HH(\by)\cap B_{\eps}(W(\by))$.
To that end, fix $W\in \HH(\by)\cap B_{\eps}(W(\by))$ and let $W_{\sigma} \coloneqq \sigma W  + (1-\sigma) W(\by)$ for any $0\leq \sigma \leq 1$. We define $\varphi:[0,1]\rightarrow \U$ by $\varphi(\sigma) = \UU(W_\sigma)$.
Since by definition $\UU$ is differentiable in $\HH(\by)$, we may apply the fundamental theorem of calculus to obtain
\begin{equation}
\label{eq:varphi}
\varphi(t)
-
\varphi(s)
=
\int_s^{t} \UU'(W_{\sigma})[W-W(\by)]\rmd \sigma \quad\text{for all $s,t\in[0,1]$.}
\end{equation}
In particular, with $s=0$, the triangle inequality yields, recalling that $W\in B_\eps(W(\by))$,
\begin{align*}
\norm{\varphi(t)}{\U} \leq \norm{\varphi(0)}{\U} + \eps\int_0^{t} \norm{\UU'(W_{\sigma})}{\LL(\W,\U)}\rmd \sigma
\quad \text{for all }0\leq t\leq 1.
\end{align*}
Assumption \ref{assume:uniform_bounded_solution} and estimate \eqref{eq:estim_norm_DU} (consequence of Assumption \ref{assume:bounds_d1R_d2Rinv}) imply the estimate
\begin{align*}
\norm{\varphi(t)}{\U} \leq C_r + \eps \int_0^t \GG\left( \norm{\varphi(\sigma)}{\U} \right)\rmd \sigma\quad \text{for all }0\leq t \leq 1.
\end{align*}
Apply Lemma \ref{lem:Gronwall_nonlin} to conclude (note that, in the notation of Lemma \ref{lem:Gronwall_nonlin}, we have $d_1=d=1$ because of the definition of $\eps$ as well as $k(s)=1$)
\begin{equation}
\label{eq:varphi bounded}
\norm{\varphi(t)}{\U} \leq G^{-1}(G(C_r)+\eps t) \le G^{-1}(G(C_r)+\eps) 
\quad\text{for all $0\le t\le1.$}
\end{equation}
Since $\norm{\UU(W)}{\U} = \norm{\varphi(1)}{\U} \le C_{\eps}$, where $C_{\eps}:=G^{-1}(G(C_r)+\eps)$, we derive the uniform boundedness of~$\UU$ on $\HH(\by)$.  
Note that this bound is independent of $\by$ and $\HH(\by)$.

{\em Step~2}: We next show that $\varphi$ defined in Step~1 is Lipschitz on $[0,1]$.
Equation~\eqref{eq:varphi} implies, for $0 \le s < t \le 1$,
\begin{align*}
\norm{\varphi(t) - \varphi(s)}{\U} &\leq \int_s^t \norm{\UU'(W_{\sigma})}{\LL(\W, \U)} \norm{W-W(\by)}{\W} \rmd \sigma\\
&\leq \int_s^t \GG\left( \norm{\varphi(\sigma)}{\U} \right)\norm{W-W(\by)}{\W} \rmd \sigma.
\end{align*}
The desired results then follow from~\eqref{eq:varphi bounded}. 

{\em Step~3:} 
We can without loss of generality assume that 
 $0<\eps\leq \eps_W$ is such that $W\in B_{2\eps}(W(\by))$ implies $\UU(W)\in B_{\eps_u}(\UU(W(\by))$. This is possible due to the Lipschitz continuity of $\varphi$ proved in the previous step and by possibly making the $\eps$ chosen in Step~1 smaller.
We now show that $\HH(\by)$ can be chosen to be $B_\epsilon(W(\by))$.
 Assume by contradiction that the \emph{maximal} $\HH(\by)$ (in the sense as there is no superset of $\HH(\by)$ with the properties specified in Definition~\ref{def:H}) is a \emph{proper} subset of $B_{\eps}(W(\by))$, i.e. $\HH(\by)\subsetneq B_{\eps}(W(\by))$. Let $W\in\partial\HH(\by) \cap B_{\eps}(W(\by)) \neq \emptyset$.
Lipschitz continuity of $\varphi$ in Step~2 shows that $\UU$ can be extended continuously to $\overline{\HH(\by)}$.
Consequently, $\UU(W)$ is well-defined and equals $\lim_{\sigma\rightarrow 1^-}\UU(\sigma W+(1-\sigma)W(\by))\in \U$.
Since $\RR$ is continuous, $\RR(W, \UU(W)) = 0$.
By Assumption~\ref{assume:bounds_d1R_d2Rinv}, $\partial_u \RR(\ol{W}, \UU(\ol{W}))$ is a homeomorphism for any $\ol{W}$ in a neighborhood of $W$ in $\W$.
We may therefore apply the implicit function theorem in $W$ to show that the domain of existence of a holomorphic extension of $\UU$ can be further extended to an open neighborhood $B\supsetneq \HH(\by)$ of $W$ in $\W$. Clearly, the neighborhood can be chosen such that $\UU(\ol{W})\in B_{\eps_u}(\UU(W(\by)))$ for all $\ol{W}\in B$.
Since $B$ can be chosen star shaped with respect to $W(\by)$, this contradicts the maximality of $\HH(\by)$.
Thus, we proved that $B_{\eps}(W(\by))= \HH(\by)$. 
The argument used in Step~1 immediately implies the uniform boundedness.
\end{proof}

Theorem~\ref{th:global_holo_ext} provides all the tools to estimate parametric regularity through Cauchy's integral theorem.
The Lévy-Ciesielski expansion \eqref{eq:def_param_LC} can be (formally) extended to the complex parameters $\bz\in \C^{\N}$.
Thus, in view of Theorem \ref{th:global_holo_ext}, $\bz\mapsto \UU(W(\bz))$ is a holomorphic extension of the parameter-to-solution map in $\by$ for all $\bz$ such that $W(\bz)$ belongs to the domain of holomorphy of $\UU$, which in Theorem \ref{th:global_holo_ext} was proved to contain $ B_{\eps}(W(\by))$ (recall that $\eps$ is independent of $\by$).
Such a set of parameters can be defined as follows: Let $\brho=(\rho_n)_{n\in\N}$ be a sequence of non-negative real numbers, and consider the polydisk
\begin{align}\label{eq:def_domain_param_holo}
\bB_{\brho}(\by)\coloneqq \set{\bz\in\XX}{ \seminorm{z_n-y_n}{} < \rho_n\ \forall n\in\N}.
\end{align} 
\begin{assumption}\label{assume:sparsity}
For $\eps>0$, $\by\in\XX_{\R}$, there exists a real positive sequence $\brho = \brho(\eps) = (\rho_n)_{n\in\N}$ such that,
\begin{align*}
\bz \in \bB_{\brho}(\by) \Rightarrow W(\bz)\in B_{\eps}(W(\by)),
\end{align*}
\end{assumption}
In conclusion, for any $\by\in\XX_\R$, $\UU\circ W: \bB_{\brho}(\by)\rightarrow \U$ is holomorphic because it is a composition of holomorphic functions. Moreover, $\UU\circ W$ is uniformly bounded by $C_{\eps}$. 

Consider a multi-index $\bnu=(\nu_1,\dots,\nu_n)\in \N_0^n$ and denote by $\partial^{\bnu}$ the mixed derivative $\partial_1^{\nu_1}\dots \partial_n^{\nu_n}$ where $\partial_j^{\nu_j}$ denotes the partial derivative of order $\nu_j$ with respect to $y_j$ (if $\nu_j=0$, the $j$-th partial derivative is omitted).
The regularity result proved above implies the following estimate on the derivatives of the parameter-to-solution map:
\begin{theorem} \label{th:bound_mix_derivative}
	Consider $u:\XX_{\R}\rightarrow \U$, the parameter-to-solution map that solves the parametric PDE \eqref{eq:parametric_problem}.
	Let Assumptions \ref{assume:uniform_bounded_solution}, \ref{assume:assumptions_IFTh}, \ref{assume:bounds_d1R_d2Rinv} hold and fix $\eps>0$ as in Theorem \ref{th:global_holo_ext}. 
	Finally, consider a real positive sequence $\brho = (\rho_n)_{n\in\N}$ as in Assumption \ref{assume:sparsity}.
	Then, for any $n\in\N$, $\bnu = \left(\nu_i\right)_{i=1}^n \in \N^n_0$, it holds that
	\begin{align}\label{eq:bound_mix_derivatives}
		\norm{\partial^{\bnu} u(\by)}{\U} 
		\leq \prod_{j=1}^n \nu_j!\rho_j^{-\nu_j} C_{\eps}\qquad \forall \by\in\XX_{\R},
	\end{align}
	where $C_{\eps}>$ 0 from Theorem \ref{th:global_holo_ext} is independent of $\bnu$ or $\by$.
	The same bound holds for $\norm{\partial^{\bnu} u}{L^2_{\bmu}(\XX_{\R}; \U)} $ (up to a constant), where $\bmu$ denotes a probability measure on $\XX_{\R}$.
\end{theorem}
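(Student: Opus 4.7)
The plan is to apply the multivariate Cauchy integral formula on the polydisc $\bB_{\brho}(\by)$, exploiting the uniform holomorphic extension already guaranteed by Theorem~\ref{th:global_holo_ext} and Assumption~\ref{assume:sparsity}. First I would observe that any mixed derivative $\partial^{\bnu}$ touches only the finitely many coordinates $y_1,\dots,y_n$; the remaining coordinates stay frozen at $y_{n+1}, y_{n+2}, \dots$. Holding these frozen components fixed, the map $(z_1,\dots,z_n)\mapsto u(z_1,\dots,z_n, y_{n+1},\dots) = \UU(W(z_1,\dots,z_n,y_{n+1},\dots))$ is a holomorphic $\U$-valued function on the finite-dimensional polydisc $\prod_{j=1}^n \{z_j : |z_j-y_j|<\rho_j\}$, because Assumption~\ref{assume:sparsity} places its $W$-image inside $B_{\eps}(W(\by))$, where Theorem~\ref{th:global_holo_ext} guarantees holomorphy of $\UU$ and the uniform bound $\norm{\UU(W)}{\U}\le C_{\eps}$.

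Next, for any radii $0<r_j<\rho_j$, I would apply the iterated Cauchy formula for Banach-space-valued holomorphic functions coordinate-by-coordinate:
\begin{align*}
\partial^{\bnu} u(\by) = \frac{\prod_{j=1}^n \nu_j!}{(2\pi\i)^n}\oint_{|z_1-y_1|=r_1}\!\!\!\!\!\cdots\oint_{|z_n-y_n|=r_n} \frac{u(z_1,\dots,z_n,y_{n+1},\dots)}{\prod_{j=1}^n (z_j-y_j)^{\nu_j+1}}\,\rmd z_1\cdots\rmd z_n.
\end{align*}
Taking $\U$-norms, pulling them under the integral by standard Bochner-integration arguments, and invoking the uniform bound $C_{\eps}$ yields
\begin{align*}
\norm{\partial^{\bnu} u(\by)}{\U} \le \prod_{j=1}^n \nu_j!\, r_j^{-\nu_j}\, C_{\eps}.
\end{align*}
Letting $r_j\nearrow \rho_j$ for each $j$ gives the desired pointwise bound~\eqref{eq:bound_mix_derivatives}. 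The $L^2_{\bmu}$ bound then follows at once: the right-hand side is independent of $\by$, so integrating the square of the pointwise inequality against the probability measure $\bmu$ preserves it.

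I do not expect any serious obstacle. The only delicate points are (i) ensuring that the Bochner-valued Cauchy formula applies in our setting, which is standard once we know the map is a holomorphic $\U$-valued function on a finite-dimensional polydisc with continuous boundary values, and (ii) ensuring that restricting the infinite-dimensional holomorphic extension to a finite-dimensional slice does not destroy the uniform bound on $\UU\circ W$; both follow directly from the combination of Theorem~\ref{th:global_holo_ext} and Assumption~\ref{assume:sparsity}, so the argument reduces to a clean, essentially routine application of Cauchy's formula.
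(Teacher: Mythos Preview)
Your proposal is correct and follows exactly the approach the paper intends: the paper simply states ``Cauchy's integral theorem implies'' immediately before the theorem and gives no further proof, so your detailed spelling-out of the multivariate Cauchy formula on the polydisc $\bB_{\brho}(\by)$ with the uniform bound $C_{\eps}$ from Theorem~\ref{th:global_holo_ext} is precisely what the authors have in mind.
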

\revision{
\begin{proof}
	Apply Cauchy's formula~\cite[Theorem 2.1.2]{Herve1989Analyticity} to each of the $n$ variables $y_1,\dots,y_n$ recursively and then differentiate.
\end{proof}
}

Note that Theorem \ref{th:bound_mix_derivative} gives the crucial bound on the derivatives that justifies many high-dimensional approximation methods e.g. sparse grids, polynomial chaos, quasi-Monte Carlo.

\section{The Stochastic Landau--Lifshitz--Gilbert equation}\label{sec:sllg}
In this section, we introduce the stochastic Landau-Lifshitz-Gilbert equation and we show that it fits the general theory described in the previous section.
Consider a bounded Lipschitz domain $D\subset \R^3$ representing a ferromagnetic body in the time interval $[0,T]$. $D_T\coloneqq [0,T] \times D$ denotes the space-time cylinder and $\partial_n$ the outward pointing normal derivative on $\partial D$. Given $\bM^0:D\rightarrow \Sb^2\coloneqq\set{\bx\in\R^3}{x_1^2+x_2^2+x_3^2=1}$ (the magnetization of the magnetic body at initial time), $\lambda>0$ (called the \emph{Gilbert damping parameter}), the deterministic version of the problem (the \emph{LLG equation}) consists of determining the dynamics of the magnetization:\\
Find $\bM:D_T\rightarrow \Sb^2$ such that
\begin{align}\label{eq:LLG}
\begin{cases}
\partial_t \bM &= \lambda_1\bM\times \Delta \bM - \lambda_2\bM\times \left(\bM\times \Delta \bM\right) \quad {\rm in}\ D_T,\\
\partial_n \bM &= \b{0} \quad {\rm on}\ \partial D\times[0,T],\\
\bM(0) &= \bM^0 \quad {\rm on} \ D,
\end{cases}
\end{align}
where $\lambda_1 = \frac{1}{1+\lambda^2}$, $\lambda_2=\frac{\lambda}{1+\lambda^2}$. The solution has constant magnitude in space and time (this follows immediately from scalar multiplication of~\eqref{eq:LLG} with $\bM$). This implies that, assuming a normalized initial condition $|\bM^0|\equiv 1$ on $D$, that
\begin{align*}
|\bM(t,\bx)|=1\qquad \text{for all }(t,\bx)\in D_T.
\end{align*}
In \eqref{eq:LLG}, the \emph{exchange term} $\Delta \bM$ can be substituted by a more general \emph{effective field} $\bH_{\rm eff}(\bM)$ containing $\Delta \bM$ and additional lower order contributions modelling additional physical effects like material anisotropy, magnetostatic energy, external magnetic fields or the more involved Dzyaloshinskii-Moriya interaction (DMI) (see e.g. \cite[Section 1.2]{PfeilerCarl-Martin2022Naae}).

The effect of heat fluctuations on the systems is described with a random model. Denote by $\left(\Omega, \EE, \P\right) $ a probability space and let $\rmd \bm{W}:\Omega \times D_T\rightarrow \R^3$ be a suitable space-time noise (note that the exact form of this noise is subject of research and below we consider a simple one-dimensional model). Consider the following formal equation for $\bM : \Omega \times D_T \rightarrow \Sb^2$:
\begin{align*}
\partial_t \bM = \lambda_1 \bM\times \left(\Delta \bM + \rmd \bm{W} \right) - \lambda_2\bM\times \left(\bM\times \Delta \bM \right) &\quad {\rm in}\ D_T,\P\textrm{-a.s.}
\end{align*}
with the same initial and boundary conditions as in \eqref{eq:LLG}.
It is customary not to include a noise in the second term of the right-hand side because of the smallness of $\lambda_2$ compared to $\lambda_1$ (see, e.g., \cite[page 3]{BrzezniakGoldys2013}).
For simplicity, we additionally assume one-dimensional noise $\bm{W}(\omega, t, \bx) = \bg(\bx) W(\omega, t)$ for all $\omega\in\Omega, (t,\bx)\in D_T$, where $\bg:D\rightarrow \R^3$ is given and $W:\Omega\times [0,T] \rightarrow \R$ denotes a (scalar) Wiener process.

The previous formal equation corresponds to the following \emph{stochastic} partial differential equation called the \emph{stochastic LLG equation}: Find $\bM: \Omega \times D_T \rightarrow \Sb^2$ such that
\begin{align}\label{eq:SLLG}
\rmd \bM =  \left( \lambda_1\bM\times \Delta \bM - \lambda_2\bM\times \left(\bM\times \Delta \bM\right) \right) \rmd t + \left(\lambda_1\bM\times \bm{g}\right) \circ\rmd W&\quad {\rm in}\ D_T,\ \P\textrm{-a.s.}
\end{align}
again with initial and boundary conditions as in \eqref{eq:LLG}. By $\circ\rmd W$ we denote the Stratonovich differential.
We define a weak solution of this problem following \cite{goldysLeTran}.

\begin{definition}
	A weak martingale solution of \eqref{eq:SLLG} is $\left(\Omega, \EE, \left(\EE_t\right)_{t\in[0,T]}, \P, W, \bM \right)$
	where 
	\begin{itemize}
		\item $\left(\Omega, \EE, \left(\EE_t\right)_{t\in[0,T]}, \P\right)$ is a filtered probability space;
		\item $W:\Omega\times [0,T]\rightarrow \R$ is a scalar Wiener process adapted to $\left(\EE_t\right)_{t\in[0,T]}$;
		\item $\bM: \Omega\times [0,T] \rightarrow L^2(D)^3$ is a progressively measurable stochastic process;
	\end{itemize}
	such that the following properties hold:
	\begin{itemize}
		\item $\bM(\omega, \cdot) \in C^0(0,T, H^{-1}(D) )$ $\P$-a.e. $\omega\in\Omega$;
		\item $\E\left(\textrm{esssup}_{t\in[0,T]} \norm{\nabla \bM(t)}{L^2(D)}^2\right) < \infty$;
		\item $\seminorm{\bM(\omega, t, \bx)}{} = 1$ $\P$-a.e. $\omega\in\Omega$, for all $t\in[0,T]$, for a.e. $\bx\in D$;
		\item For all $t\in [0,T]$ and all $\bphi \in C^{\infty}_0(D)^3$, $\P$-a.s. there holds
		\begin{align*}
			\dual{\bM(t), \bphi} &- \dual{\bM^0, \bphi}
			= -\lambda_1 \int_0^t \dual{\bM\times\ \nabla \bM, \nabla \bphi} \rmd s \\
			&-\lambda_2 \int_0^t \dual{\bM\times \nabla \bM, \nabla\left(\bM\times\bphi\right)}\rmd s
			+ \lambda_1 \int_0^t \dual{\bM\times \bg, \bphi} \circ \rmd W(s),
		\end{align*}
		where $\dual{\cdot, \cdot}$ denotes the $L^2(D)^3$ scalar product.
	\end{itemize}
\end{definition}
Existence of solutions to~\eqref{eq:SLLG} in this sense was first established in~\cite{BrzezniakGoldys2013}, while uniqueness of weak solutions is still an open question. An alternative existence proof was given in~\cite{goldysLeTran}. Here the authors use the Doss-Sussman transform to obtain a PDE with random coefficients instead of the stochastic differential as explained in the previous section.

\section{Random LLG equation by  Doss-Sussmann transform and parametric LLG equation by Lévy-Ciesielski expansion} \label{sec:rnd_LLG_by_Doss_Sussmann}
In this section, we apply the strategy outlined in Section \ref{sec:from_SPDE_to_paramPDE_general} to the SLLG equation~\eqref{eq:SLLG} in order to obtain a random coefficient PDE.
While this was done in~\cite{goldysLeTran} for technical reasons, we are mainly interested in obtaining an equivalent problem that is more amenable to collocation-type approximation. Another advantage is (formally) gaining a full order of differentiability of the solution.
Given $\bg:D\rightarrow \R^3$, $s\in\R$ and $\bv:D\rightarrow \R^3$ with suitable regularity, consider the following operators: 
\begin{align}
	G\bv &= \bv \times \bg,\\
	\mathcal{C} \bv &= \bv\times \Delta \bg + 2\nabla \bv\times \nabla \bg,\\
	e^{sG}\bv  &= \bv + \sin(s) G \bv + (1-\cos s) G^{2}\bv,  \label{eq:def_eWG}\\
	\EE(s, \bv) &= \sin(s) \mathcal{C}\bv + (1-\cos(s)) (\mathcal{C}G + G\mathcal{C})\bv,\\
	\hC(s,\bv) &= e^{-s G}\mathcal{E}(s,\bv) = \mathcal{E}(s,\bv) - \sin(s) G \mathcal{E}(s,\bv) + (1-\cos(s))G^{2}\mathcal{E}(s,\bv), \label{eq:def_CHat}
\end{align}
where we define $\nabla \bv\times \nabla \bg \coloneqq \sum_{j=1}^3 \frac{ \partial \bv}{\partial x_j} \times \frac{\partial \bg}{\partial x_j}$.
Note that $e^{sG}$ is the exponential of the operator $sG$. The fact $G\circ G\circ G\bv = -\bv$ simplifies the expression.
Expanding some definitions, the last operator can be written as
\begin{align*}
\hC(s,\bv) 
&= \sin(s) \CC \bv
+ (1-\cos(s)) \left(\CC G + G \CC\right)\bu
- \sin(s)^2 G\CC \bu\\
& -\sin(s)(1-\cos(s))G\left(\CC G+G\CC \right)\bu 
+ (1-\cos(s))\sin(s) G^2 \CC \bu\\
& + (1-\cos(s))^2 G^2\left(\CC G+G\CC \right) \bu
\end{align*}
or, in compact form, as
\begin{align}\label{eq:CHat_sum_structure}
\hC(s,\bv) = \sum_{i=1}^6 b_i(s) F_i(\bv),
\end{align}
where $b_i$ are uniformly bounded with bounded derivatives (let $0<\beta<\infty$ be a uniform bound for both, which depends only on $\bg$) and the $F_i$ are linear and globally Lipschitz with the Lipschitz constant $0<L<\infty$ depending only on $\bg$, i.e., for any $i=1,\dots, 6$, 
\begin{align*}
&\norm{b_i(W)}{L^{\infty}(\R)}\leq \beta, \quad \norm{b_i'(W)}{L^{\infty}(\R)}\leq \beta\qquad \forall W\in C^0([0,T]),\\
&\norm{F_i(\bu) - F_i(\bv)}{L^2(D)} \leq L \norm{\bu-\bv}{H^1(D)}\qquad \forall \bu,\bv\in H^1(D)^3.
\end{align*}
In the present setting, the \emph{Doss-Sussmann transform} \eqref{eq:def_Doss_Sussmann} reads
$ \m = e^{-W G}\bM.$
We obtain the \emph{random coefficients LLG equation}: Given $\bM^0:D\rightarrow \Sb^2$, find $\m:\Omega\times D_T \rightarrow \Sb^2$ such that for $\P$-a.e. $\omega\in \Omega$ 
\begin{align}\label{eq:rnd_LLG_problem}
\begin{cases}
\partial_t \m(\omega) &= \lambda_1 \m(\omega)\times\left(\Delta \m(\omega) + \hC(W(\omega), \m(\omega))\right) \\ 
& -\lambda_2 \m(\omega)\times\left(\m(\omega)\times\left(\Delta \m(\omega) + \hC(W(\omega), \m(\omega))\right)\right)\qquad \textrm{in } D_T,\\
\partial_n \m(\omega) &= 0\qquad \textrm{on } [0,T]\times\partial D, \\
\m(\omega, 0, \cdot) &= \bM^0\qquad \textrm{on } D.
\end{cases}
\end{align}
It is shown in~\cite[Lemma 4.6]{goldysLeTran} that any weak solution $\m$ of~\eqref{eq:rnd_LLG_problem} corresponds to a weak martingale solution ${\bM=e^{WG}\m}$ of~\eqref{eq:SLLG} through the \emph{inverse Doss-Sussmann transform}. Existence of solutions to~\eqref{eq:rnd_LLG_problem} is shown in~\cite{goldysLeTran}, but again uniqueness is open.

Following Section \ref{sec:from_SPDE_to_paramPDE_general}, we derive a parametric PDE problem using the Lévy-Ciesielski expansion of the Wiener process.
The \emph{parametric LLG equation} reads: Given $\bM^0:D\rightarrow \Sb^2$, find $\m:\XX_{\R}\times D_T \rightarrow \Sb^2$ such that for a.e. $\by\in\XX_{\R}$
\begin{align}\label{eq:parametric_LLG_stong}
\begin{cases}
\partial_t \m(\by) &= \m(\by)\times\left(\Delta \m(\by) + \hC(W(\by), \m(\by))\right) \\ 
 & - \m(\by)\times\left(\m(\by)\times\left(\Delta \m(\by) + \hC(W(\by), \m(\by))\right)\right)\qquad \textrm{in } D_T,\\
\partial_n \m(\by) &= 0\qquad \textrm{on } [0,T]\times \partial D, \\
\m(\by, 0, \cdot) &= \bM^0\qquad \textrm{on } D,
\end{cases}
\end{align}
where we  set $\lambda_1=\lambda_2=1$ for simplicity.
The precise definition of $\XX_{\R}$ is given below in \eqref{eq:def_param_space_R_Holder}.

Applying the triple cross-product formula ${\bm{a}\times(\bm{b}\times\bm{c}) = \bm{b}(\bm{a}\cdot \bm{c}) - \bm{c}(\bm{a}\cdot \bm{b})}$ on $\m(\by)\times\left(\m(\by)\times\left(\Delta \m(\by)\right)\right)$, together with the fact that $\seminorm{\m}{}\equiv 1$, gives an equivalent equation valid again for a.e. $\by\in \XX_{\R}$: 
\begin{align}\label{eq:llg_strong_alt}
    \partial_t \m(\by) &= \Delta \m(\by) + \m(\by)\times \Delta \m(\by) - \left( \nabla \m(\by):\nabla \m(\by)\right) \m(\by) \\
    &+ \m(\by) \times \hC(W,\m(\by)) - \m(\by)\times\left(\m(\by)\times \hC(W,\m(\by))\right)\quad \textrm{in } D_T.
\end{align}

\revision{ We conclude the section with a result on space and time Hölder regularity of solutions of the random LLG equation~\eqref{eq:rnd_LLG_problem}.
In Appendix~\ref{appendix:Holder_regularity}, we recall the definitions of basic Hölder spaces and give a detailed proof of the result.} 

We define the \emph{parabolic distance} 
$d(P,Q) \coloneqq \left( \vert t-s\vert + \vert \bx-\by\vert^2\right)^{1/2}$
 between $P=(t,\bx)$, $Q=(s,\by)\in D_T$.	
For $v:D_T\rightarrow \C$ and $0<\alpha<1$, define the seminorm
$\vert v\vert_{C^{\alpha/2, \alpha}(D_T)} \coloneqq \sup_{\substack{P,Q\in D_T \\ P\neq Q}} \frac{\vert v(P)-v(Q)\vert}{d(P,Q)^{\alpha}}$
and the Banach spaces
$C^{\alpha/2, \alpha}(D_T)$
with the norm $\norm{v}{C^{\alpha/2, \alpha}(D_T)} \coloneqq \norm{v}{C^0(D_T)} + \vert v\vert_{C^{\alpha/2, \alpha}(D_T)}$ (see~\cite[Section 1.2.3]{Wu2006Elliptic} for details).
Finally, consider the Banach space
\begin{align}\label{eq:space_Holder_1_time_2_space}
C^{1+\alpha/2, 2+\alpha}(D_T) \coloneqq
\set{v:D_T\rightarrow \C}{\partial_t^i \partial_x^j v \in C^{\alpha/2, \alpha}(D_T) \ \forall i,j\in\N_0: 2i+j \leq 2}
\end{align}
with the norm
$\norm{v}{C^{1+\alpha/2, 2+\alpha}(D_T)}\coloneqq 
	\sum_{j=0}^2 \norm{D^{j}v}{C^{\alpha/2,\alpha}(D_T)}
	+\norm{\partial_t v}{C^{\alpha/2,\alpha}(D_T)}$.
In what follows, we work with the corresponding Hölder seminorm
\begin{align}\label{eq:Holder_seminorm_k}
	\seminorm{v}{C^{1+\alpha/2, 2+\alpha}(D_T)} 
	\coloneqq 
	\seminorm{v}{C^{\alpha/2,\alpha}(D_T)}+
	\sum_{j=1}^2 \norm{D^{j}v}{C^{\alpha/2,\alpha}(D_T)}
	+\norm{\partial_t v}{C^{\alpha/2,\alpha}(D_T)}.
\end{align}
These H\"older spaces are closed under multiplication and their definitions generalize to vector fields as usual. 
In the remainder of this section, we adopt the short notation $\norm{\cdot}{\alpha} = \norm{\cdot}{C^{\alpha}(D)}$, $\norm{\cdot }{1+\alpha/2,2+\alpha} = \norm{\cdot}{C^{1+\alpha/2, 2+\alpha}(D_T)}$, and analogously for all other Hölder norms and seminorms.

The H\"older regularity properties of the sample paths are summarized in the following theorem, whose proof can be found in Appendix~\ref{appendix:Holder_regularity}.
\begin{theorem}\label{th:Holder_sample_paths}
	Let $0<\alpha<1$. Assume that $W\in C^{\alpha/2}([0,T])$, $\bM^0\in C^{2+\alpha}(D)$ and $\bg\in C^{2+\alpha}(D)$.
	There exists $\eps>0$ such that if 
	$\norm{\bM^0}{2+\alpha} \leq \eps$,
	$\norm{\Delta\bg}{\alpha} \leq \eps$, and
	$\norm{\nabla\bg}{\alpha} \leq \eps$,
	then the solution $\m$ of the random LLG equation~\eqref{eq:rnd_LLG_problem} with initial condition $\m(0)=\bM^0$ and homogeneous Neumann boundary conditions belongs to $C^{1+\alpha/2, 2+\alpha}(D_T)$. Moreover,
	\begin{align}\label{eq:estim_norm_m}
		\norm{\m}{1+\alpha/2, 2+\alpha} \leq C_r,
	\end{align}
	where $C_r>0$ depends on $\norm{\bg}{2+\alpha}$, $\norm{\bM^0}{2+\alpha}$, $\lambda$, $D$ and $T$ but is independent of $W$.
\end{theorem}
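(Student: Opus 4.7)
The plan is to treat \eqref{eq:rndLLG_alternative_form} as a semi-linear parabolic equation and to close a fixed-point argument in a small ball of $C^{1+\alpha/2, 2+\alpha}(D_T)$, using parabolic Schauder theory for the linear heat equation as the backbone. First I isolate the linear operator by moving the gyroscopic time-derivative to the right-hand side,
\[
\lambda \partial_t \m - \Delta \m = \Phi(\m, W), \qquad \Phi(\bv, W) := -\bv \times \partial_t \bv + \lvert\nabla \bv\rvert^2 \bv - \bv \times \bigl(\bv \times \hC(W, \bv)\bigr),
\]
and define $T : \bv \mapsto \m$, where $\m$ is the unique solution of the linear heat equation $\lambda \partial_t \m - \Delta \m = \Phi(\bv, W)$ with initial datum $\bM_0$ and homogeneous Neumann boundary conditions. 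For $\bv \in C^{1+\alpha/2, 2+\alpha}(D_T)$ the source $\Phi(\bv, W)$ lies in $C^{\alpha/2, \alpha}(D_T)$, so $T$ is well-defined, and any fixed point solves \eqref{eq:rndLLG_alternative_form}.

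The analytic core is the parabolic Schauder estimate for the linear heat equation with Neumann data (see e.g.\ Ladyzhenskaya--Solonnikov--Uraltseva, Chapter IV, or Lieberman's monograph), which, under the compatibility condition $\partial_n \bM_0 = 0$, yields
\[
\norm{T(\bv)}{1+\alpha/2, 2+\alpha} \leq C_{\rm sch}\bigl(\norm{\Phi(\bv, W)}{\alpha/2, \alpha} + \norm{\bM_0}{2+\alpha}\bigr),
\]
with $C_{\rm sch}$ depending only on $\lambda$, $D$, and $T$. The main task is to bound $\norm{\Phi(\bv, W)}{\alpha/2, \alpha}$ when $\norm{\bv}{1+\alpha/2, 2+\alpha} \leq R$. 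Using the submultiplicativity of the parabolic Hölder norm, the two geometric terms yield $\norm{\bv \times \partial_t \bv}{\alpha/2, \alpha} + \norm{\lvert\nabla \bv\rvert^2 \bv}{\alpha/2, \alpha} \leq C(R^2 + R^3)$. For the $\hC$-term I expand it via \eqref{eq:CHat_sum_structure}: each $b_i(W)$ is bounded by $\beta$ independently of $W$, while every $F_i(\bv)$ carries the factor $\CC$, whose action on $\bv$ is multiplication against $\nabla \bg$ and $\Delta \bg$. The smallness hypotheses $\norm{\nabla \bg}{\alpha}, \norm{\Delta \bg}{\alpha} \leq \eps$ therefore give $\norm{\hC(W, \bv)}{\alpha/2, \alpha} \leq C(\bg)\, \eps R$ and, consequently, a bound of order $\eps R^3$ for the cubic $\hC$-contribution in $\Phi$. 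Collecting,
\[
\norm{\Phi(\bv, W)}{\alpha/2, \alpha} \leq C_{\Phi}\bigl(R^2 + R^3 + \eps R^3\bigr),
\]
with $C_{\Phi}$ depending on $\norm{\bg}{2+\alpha}$ and $\beta$ but independent of $W$.

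Choosing $R := 2 C_{\rm sch} \eps$ absorbs the initial datum as $C_{\rm sch}\norm{\bM_0}{2+\alpha} \leq R/2$, and taking $\eps$ small enough then ensures $C_{\rm sch}C_{\Phi}(R^2 + R^3 + \eps R^3) \leq R/2$, so that $T$ maps the closed ball of radius $R$ in $C^{1+\alpha/2, 2+\alpha}(D_T)$ into itself. The same multilinear bookkeeping applied to $\Phi(\bv_1, W) - \Phi(\bv_2, W)$, where each difference factor is linear in $\bv_1 - \bv_2$ with the remaining factors bounded by $R$ or $\eps$, yields contractivity of $T$ after a further reduction of $\eps$; Banach's fixed-point theorem then produces the required $\m$, and the estimate \eqref{eq:estim_norm_m} follows with $C_r$ depending on the announced data but independent of $W$. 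The main obstacle I expect is the cubic reaction $\lvert\nabla \m\rvert^2 \m$, which carries no small factor of its own and can only be absorbed by smallness of $R$; this is precisely what forces the smallness hypotheses on $\bM_0$ and $\bg$. A secondary technical issue is that the parabolic Schauder estimate for Neumann data is classically proved on domains of $C^{2+\alpha}$ regularity, whereas $D$ is merely Lipschitz; one must either strengthen the assumption on $\partial D$ or combine interior Schauder bounds with a localized boundary-straightening argument, together with the compatibility condition $\partial_n \bM_0 = 0$.
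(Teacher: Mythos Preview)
Your approach is correct and genuinely different from the paper's. The paper does not run a direct Banach fixed-point on the heat semigroup; instead it sets up a Newton-type iteration: starting from $\m_0\equiv\bM_0$, it computes the residual $\br_\ell=\RR_a(\m_\ell)$ (where $\RR_a$ is defined with the modified leading term $|\bv|^2\Delta\bv$ in place of $\Delta\bv$, legitimate since $|\m|=1$), solves the frozen-coefficient linear problem $(L_{\bx_0}\partial_t-\Delta)\bR_\ell=\br_\ell$ with $L_{\bx_0}\bv=\lambda\bv+\bM_0(\bx_0)\times\bv$, and updates $\m_{\ell+1}=\m_\ell-\bR_\ell$. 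Convergence comes from a contraction estimate $\normMag{\bR_{j+1}}\le q\,\normMag{\bR_j}$ obtained via a careful expansion of $\RR_a(\m_j-\bR_j)$ (Lemma~\ref{lemma:R_difference}); a final appeal to \cite[Lemma~4.8]{feischl2017existence} shows that the limit solves \eqref{eq:rndLLG_alternative_form}. Your route is more elementary: by keeping $\Delta\m$ as the honest linear spatial operator and pushing $\m\times\partial_t\m$ into $\Phi$, you avoid both the frozen-coefficient device and the extra step reconciling $\RR_a(\m)=0$ with \eqref{eq:rndLLG_alternative_form}, at the cost of a nonlinearity that contains a full time derivative---harmless here because it enters only quadratically and you work in a small ball. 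The paper's scheme, on the other hand, is tailored to the structure of LLG and plugs directly into the machinery of \cite{feischl2017existence}, which may be an advantage for extensions to larger data or more involved effective fields. Your remark on domain regularity is on point: the paper tacitly assumes $\partial D\in C^{2+\alpha}$ (cf.\ Lemma~\ref{lemma:parabolic_regularity}), so no extra work is needed there.
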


\section{Holomorphic regularity of parameter-to-solution map with Hölder sample paths}\label{sec:holomoprhic_regularity_Holder}
In this section, we frequently work with complex-valued functions. If not mentioned otherwise, Banach spaces of functions such as $L^2(D)$ are understood to contain complex valued functions. To denote the codomain explicitly, we write e.g. $L^2(D; \C)$ or $L^2(D;\R)$.

We specify a possible choice of Banach spaces used in Section~\ref{sec:proofstrategy} for the case of the SLLG equation. Fix $0 < \alpha <1$ and consider the parameter set
\begin{align}\label{eq:def_param_space_Holder}
\XX = \XX(\alpha) \coloneqq \set{\bz\in \C^{\N}}{\norm{\bz}{\XX,\alpha} <\infty},\quad
\textrm{where }\norm{\bz}{\XX,\alpha} \coloneqq \sum_{\ell \in \N_0} \max_{j=1,\dots,\lceil 2^{\ell-1}\rceil} \seminorm{z_{\ell, j} }{} 2^{-(1-\alpha)\ell/2},
\end{align}
where we used the hierarchical indexing \eqref{eq:re-indexing_LC}.
For real parameters consider
\begin{align}\label{eq:def_param_space_R_Holder}
\XX_{\R} \coloneqq \XX \cap \R^{\N}.
\end{align}
The definition of the Banach spaces \revision{for real and complex coefficients sample paths follows from the Lévy-Ciesielski expansion~\eqref{eq:def_param_LC}:
\begin{align*}
	\W &\coloneqq \set{W:[0,T]\rightarrow\C}{\exists \bz\in\XX \textrm{ such that } W(t) = \sum_{n\in\N} z_n \eta_n(t) \forall t\in[0,T]},\\
	\W_\R &\coloneqq \set{W:[0,T]\rightarrow\R}{\exists \by\in\XX_\R \textrm{ such that } W(t) = \sum_{n\in\N} y_n \eta_n(t) \forall t\in[0,T]}.
\end{align*}}
It is however interesting to identify classical spaces to which they belong.
\begin{remark}\label{rk:choice_alpha}
	In the regularity results used below, we have to work in H\"older spaces with $\alpha\in(0,1)$. 
	For the Faber-Schauder basis functions on $[0,1]$ (see Section \ref{sec:from_SPDE_to_paramPDE_general}) we have 
	\begin{align*}
	\norm{\eta_{\ell, j}}{L^{\infty}(0,1)} \leq  2^{-\ell/2},\quad\seminorm{\eta_{\ell, j}}{C^1([0,1])} \leq  2^{\ell/2}, \quad\text{and}\quad \norm{\eta_{\ell, j}}{C^{\alpha}([0,1])} \leq  2\cdot 2^{-(1/2-\alpha)\ell}.
	\end{align*}
	Only for $\alpha\ll 1$, we obtain a decay of $\norm{\eta_{\ell, j}}{C^{\alpha}([0,1])}$ close to $2^{-\ell/2}$, which is what we expect for a truncated Brownian motion. Hence, from now we assume that $\alpha>0$ is arbitrarily small.
\end{remark}
It can be proved that
\begin{align}\label{eq:def_WR_HolderLLG}
\W_{\R} &\subset C^{\alpha/2}([0,T]; \R)\\
\label{eq:def_W_HolderLLG}
\W &\subset C^{\alpha/2}([0,T])
\end{align}
with the same techniques used in the proof of Lemma \ref{lemma:sparsity_param_to_sol_map} below. This choice of parameter space is motivated by the fact that the sample paths of the Wiener process belong to $C^{1/2-\eps}([0,T])$ almost surely for any $\eps>0$.
To define the space of solutions $\U$, write the magnetizations as
\begin{align*}
\m(\omega,t,\bx) = \bM^0(\bx) + u(\omega, t,\bx)\qquad \textrm{for a.e. }\omega\in \Omega, (t,\bx)\in D_T,
\end{align*}
where we recall $\bM^0$ is the given initial condition assumed to belong to $C^{2+\alpha}(D)$. Consider 
\begin{align}\label{eq:def_U_HolderLLG}
u \in \U &= C^{1+\alpha/2, 2+\alpha}_0(D_T) \coloneqq \left\{\bv \in C^{1+\alpha/2, 2+\alpha}(D_T) : \bv(0)=\b{0}\ \textrm{in } D,\ \partial_{n} \bv = \b{0}\ \textrm{on }\partial D\right\},\\
\U_{\R} &= \set{\bv:D_T\rightarrow \Sb^2}{\bv\in \U},
\end{align}
where $\Sb^2$ is the unit sphere in $\R^3$.
See Section \ref{sec:rnd_LLG_by_Doss_Sussmann} for the definition of the relevant Hölder spaces.
Given a noise coefficient $\bg\in C^{2+\alpha}(D)$, we define the residual as:
\begin{align}\label{eq:forward_map_holder_law}
\begin{split}
\RR(W,u) &\coloneqq \tRR(W, \bM^0+u), \qquad \textrm{where }\\
\tRR(W, \m) &\coloneqq \partial_t \m -\Delta \m - \m\times \Delta \m + \left( \nabla \m:\nabla \m\right) \m - \m \times \hC(W,\m) + \\
&+\m\times\left(\m\times \hC(W,\m)\right).
\end{split}
\end{align}
Here, the cross product is defined as in the real setting: for any $\bm{a},\bm{b}\in\C^3$, let
$
    \bm{a}\times \bm{b} = \left(a_2b_3-a_3b_2, a_3b_1-a_1b_3, a_1b_2-a_2b_1\right).
$
Note that due to the sesquilinear complex scalar product this implies that $\dual{\bm{a}\times \bm{b}, \bm{a}}$ might not vanish for complex valued vector fields $\bm{a},\bm{b}$.
Finally, the space of residuals is
\begin{align*}
	R = C^{\alpha/2, \alpha}(D_T),
\end{align*}
so that $\RR$ is understood as a function between Banach spaces:
\begin{align}\label{eq:forward_map_holder}
	\RR : \W\times \U\rightarrow R, \qquad (W,\m) \mapsto \RR(W,\m).
\end{align}
Observe that we already proved Assumption~\ref{assume:uniform_bounded_solution} in Theorem \ref{th:Holder_sample_paths}.

\subsection{Proof of Assumptions \ref{assume:assumptions_IFTh} and \ref{assume:bounds_d1R_d2Rinv}}
In order to apply the general strategy outlined in Section \ref{sec:proofstrategy}, we need to prove Assumption \ref{assume:assumptions_IFTh} and \ref{assume:bounds_d1R_d2Rinv} for the problem defined by~\eqref{eq:forward_map_holder_law}.

\begin{remark}
\revision{In the next lemma, we apply the well posedness result for parabolic PDEs with Hölder coefficient~\cite[Chapter~VII, \S~10, Theorem 10.3]{ladyzhenskaya1968linear}}.
\revision{
The validity of the theorem hinges on the fact that the problem is \emph{strongly parabolic}, i.e. the principal part $\AA_0$ of the elliptic operator satisfies: There exists $\delta>0$ such that for a.e. $(t,\bx)\in D_T$, 
\begin{align*}
	\Re{ \dual{ \AA_0(t, \bx) \bz, \bz}_{}} \geq \seminorm{\bz}{}^2 \qquad \forall\bz\in\C^3,
\end{align*}
where $\dual{\cdot,\cdot}$ and $\seminorm{\cdot}{}$ denote the standard scalar product and norm on $\C^3$.
}
Note that the compatibility conditions in~\cite[Chapter~VII, \S~10, Theorem 10.3]{ladyzhenskaya1968linear} of order zero ($\alpha<1$) are automatically satisfied in our case. This also takes care of the fact that~\cite[Chapter~VII, \S~10, Theorem 10.3]{ladyzhenskaya1968linear} only works for small end times $0<\widetilde T\leq T$ as we can restart the estimate at any time $\widetilde T$ and get the estimate for the full time interval. Moreover, while not stated explicitly, analyzing the proof of~\cite[Chapter~VII, \S~10, Theorem 10.3]{ladyzhenskaya1968linear} gives the dependence of $C_{\rm stab}$ on the coefficients of the problem.
\end{remark}
\begin{lemma}\label{lemma:hyp_implicit function_Holder}
Let $\alpha\in(0,1)$, $\bg\in C^{2+\alpha}(D)$ and $\bM^0\in C^{2+\alpha}(D)$.
Consider the spaces $\W, \W_{\R}, \U, \U_{\R}, R$ defined at the beginning of this section. 
Then, the residual $\RR$~\eqref{eq:forward_map_holder_law}-\eqref{eq:forward_map_holder} is a well-defined function and Assumptions \ref{assume:assumptions_IFTh} holds true. More generally, it can be proved that $\partial_u \RR(W, u)$ is a homeomorphism between $\U$ and $R$ if
\begin{align} \label{eq:SC_DuR_homeo_LLG}
W\in\W \quad\text{and}\quad u\in\U \text{ satisfies } \norm{\Im{u}}{L^{\infty}(D_T)} \leq \frac{1}{4}.
\end{align}
Finally, Assumption \ref{assume:bounds_d1R_d2Rinv} also holds true with $\eps_W>0$, $\eps_u = \frac{1}{4}$, and
\begin{align*}
    \GG_1(s) &= \left(1+e^{\eps_W}(1+\eps_W) \right)^2 \left(1+\norm{\bg}{C^{2+\alpha}(D)}\right)^4 \left(1+\norm{\bM^0}{C^{2+\alpha}(D)}+s\right)^3,\\
    \GG_2(s) &= C_{\rm stab}(s)\qquad \forall s\geq 0,
\end{align*}
and $C_{\rm stab}=C_{\rm stab}( \norm{u}{\U})>0$ is as in\revision{~\cite[Chapter~VII, \S~10, Theorem 10.3]{ladyzhenskaya1968linear}}, i.e. it guarantees that \\
$\norm{\left(\partial_u \RR(W, u)\right)^{-1} f}{\U} \leq C_{\rm stab}(\norm{u}{\U}) \norm{f}{R}$ for any $f\in R$, $W\in\W$, $u\in\U$.
\end{lemma}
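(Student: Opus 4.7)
The plan is to verify the four claims in sequence, with the homeomorphism of $\partial_u\RR$ via parabolic regularity (Lemma~\ref{lemma:parabolic_regularity}) as the core technical step. First, for well-definedness of $\RR:\W\times\U\to R$: writing $\m=\bM_0+u$ with $\bM_0\in C^{2+\alpha}(D)$, each summand in $\tRR$ should land in $C^{\alpha/2,\alpha}(D_T)$. The Banach-algebra property of Hölder spaces handles the cross and triple products and the $(\nabla\m:\nabla\m)\m$-term, while Lemma~\ref{lemma:cont_residual_LLG} controls the $\hC$-dependent terms. The resulting bound is polynomial in $\norm{\m}{1+\alpha/2,2+\alpha}$ with coefficients depending only on $\bg$.

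For $C^1$-regularity and the $\partial_W\RR$-bound $\GG_1$, all $u$-dependencies of $\RR$ are polynomial and hence smoothly Fréchet differentiable in $\U$. The $W$-dependence enters only through the entire scalar functions $\sin(W),\cos(W)$ in~\eqref{eq:def_CHat}, whose Fréchet derivatives in $\W$ are $\cos(W)\delta W$ and $-\sin(W)\delta W$, so continuity of both partial derivatives will follow from the product estimates above. To obtain $\GG_1$ explicitly, I would use the decomposition~\eqref{eq:CHat_sum_structure}: on the complex ball $B_{\eps_W}(W(\by))$, one has $|b_i(W)|,|b_i'(W)|\leq (1+\eps_W)e^{\eps_W}$ because $\sin,\cos$ of complex arguments grow at most like $e^{|\Im\cdot|}$. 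Combining with the fourth power of $\norm{\bg}{C^{2+\alpha}(D)}$ (two factors from Lemma~\ref{lemma:cont_residual_LLG}, two more from the Banach-algebra extension of the trilinear term) and the cubic $\m$-dependence from the double cross product yields the claimed form.

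The heart of the argument is the homeomorphism property of $\partial_u\RR(W,u)$. A direct computation gives
\begin{align*}
\partial_u\RR(W,u)[v]=\partial_t v-(I+\m\times)\Delta v+\text{(first- and zero-order terms in $v$)},
\end{align*}
with coefficients in $C^{\alpha/2,\alpha}(D_T)$ depending polynomially on $\m$, $\nabla\m$ and $\hC(W,\m)$. The principal symbol along $\xi\in\R^d$ is $(I+\m\times)|\xi|^2$. For real $u$ with $\m\in\Sb^2$ (the case of Assumption~\ref{assume:assumptions_IFTh}(ii), since Theorem~\ref{th:Holder_sample_paths} forces $|\m|=1$), $\m\times$ is real skew-symmetric with spectrum $\{0,\pm i\}$, so $I+\m\times$ has spectrum $\{1,1\pm i\}$ with real parts at least $1$. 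For complex $u$ with $\norm{\Im u}{L^\infty(D_T)}\leq 1/4$, I would exploit the algebraic identity $(\m\times)^2=\m\m^T-(\m\cdot\m)I$ (complex bilinear form) to show that the eigenvalues $1,1\pm i\sqrt{\m\cdot\m}$ of $I+\m\times$ retain real parts uniformly bounded below by a positive constant depending only on $\bM_0$ and the cut-off. Strong parabolicity then lets Lemma~\ref{lemma:parabolic_regularity} deliver, for every $f\in R$, a unique $v\in\U$ solving $\partial_u\RR(W,u)[v]=f$ with $\norm{v}{\U}\leq C_{\rm stab}(\norm{u}{\U})\norm{f}{R}$. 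The open mapping theorem then upgrades this to a homeomorphism and sets $\GG_2(\norm{u}{\U}):=C_{\rm stab}(\norm{u}{\U})$.

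The hard part will be this strong-parabolicity check for complex $\m$: the usual LLG argument exploits reality of $\m$ and the pointwise constraint $|\m|=1$, and neither survives complexification. The cut-off $\norm{\Im u}{L^\infty(D_T)}\leq 1/4$ is precisely what is needed to keep the spectrum of $I+\m\times$ uniformly in the open right half-plane and thus to legitimately invoke Lemma~\ref{lemma:parabolic_regularity}. Everything else reduces to Hölder product inequalities and elementary bounds on $\sin,\cos$ on horizontal strips around $\R$.
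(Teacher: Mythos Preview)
Your overall architecture matches the paper's: well-definedness via the Banach-algebra property of H\"older spaces, $C^1$-regularity by differentiating the polynomial/trigonometric building blocks in~\eqref{eq:def_CHat}, and the homeomorphism of $\partial_u\RR$ via Lemma~\ref{lemma:parabolic_regularity}. The derivation of $\GG_1$ through the growth of $\sin,\cos$ on horizontal strips is also essentially what the paper does.

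The one substantive difference is how you verify parabolicity of the linearization. The paper does \emph{not} compute the eigenvalues of $I+\m\times$. Instead it establishes the coercivity bound
\[
\Re\langle (I+\m\times)\bm{w},\bm{w}\rangle \ge \tfrac12|\bm{w}|^2
\]
directly from the elementary identity $\Re\langle \m\times\bm{w},\bm{w}\rangle = 2\langle\Im\bm{w}\times\Re\bm{w},\Im\m\rangle$ (real inner product on the right). Since $\bM_0$ is real, $\Im\m=\Im u$, so $|\Re\langle\m\times\bm{w},\bm{w}\rangle|\le 2\|\Im u\|_{L^\infty}|\bm{w}|^2$, and the cut-off $1/4$ gives exactly the ellipticity constant $1/2$---with no dependence on $\bM_0$ or on $\Re u$.

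Your eigenvalue route has a gap as written. It is true that the eigenvalues $1,\,1\pm i\sqrt{\m\cdot\m}$ remain in a fixed right half-plane when $|\Im\m|\le 1/4$, but that is Petrovsky parabolicity, not the \emph{strong} parabolicity (coercivity of the Hermitian form) invoked in Lemma~\ref{lemma:parabolic_regularity}. For non-normal matrices---and $\m\times$ is non-normal once $\m$ is complex---eigenvalues in $\{\Re z\ge c\}$ do \emph{not} imply $\Re\langle A\bm{w},\bm{w}\rangle\ge c|\bm{w}|^2$. To close the argument you would have to observe that $\m\times=(\Re\m)\times + i(\Im\m)\times$ splits into a skew-Hermitian part (contributing zero to the real quadratic form) plus a Hermitian perturbation of operator norm $\le|\Im\m|$, which is precisely the paper's identity in disguise. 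The direct coercivity computation is shorter and makes the role of the constant $1/4$ transparent.
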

\begin{proof}[Proof that $\RR$ is well-defined]
Let us first show that the residual $\RR$ is a well-defined function.
Clearly, $\bM^0 +u \in C^{1+\alpha/2, 2+\alpha}(D_T)$ if $u \in C^{1+\alpha/2, 2+\alpha}_0(D_T)$. 
Observe that
\begin{align*}
G &\colon C^{1+\alpha/2,2+\alpha}(D_T)\to C^{1+\alpha/2, 2+\alpha}(D_T)\quad\text{and}\quad
\mathcal{C} \colon C^{\alpha/2,1+\alpha}(D_T)\to C^{\alpha/2, \alpha}(D_T),
\end{align*}
so $\hC(W,\m)\in C^{\alpha/2, \alpha}(D_T)$.
Thus, $\RR(W, u)$ is a sum of functions belonging to $C^{\alpha/2, \alpha}(D_T)$.
The fact that $\RR$ is continuous can be easily verified by checking that each term in~\eqref{eq:forward_map_holder_law} is continuous.
\end{proof}
\begin{proof}[Proof of \emph{(i)} in Assumption \ref{assume:assumptions_IFTh}] The residual $\RR$ is differentiable because it is a linear combination of differentiable functions.
We now prove that each partial derivative is continuous.
For $\omega \in C^{\alpha/2}([0,T])$,
\begin{align}\label{eq:cont_D1R}
&\partial_W\mathcal{E}(W,\m)[\omega] =  \left( \cos(W) \mathcal{C} \m + \sin(W) (G\mathcal{C} + \mathcal{C} G)\m \right) \omega,\\
&\partial_W\hC(W,\m)[\omega] = e^{W G} \partial_1 \mathcal{E}(W,\m)[\omega] + \left(\cos(W) G \mathcal{E}(W,\m) + \sin(W) G^2 \mathcal{E}(W,\m)\right) \omega,\\
&\partial_W\tRR(W,\m)[\omega] = -\m\times \partial\hC(W,\m)[\omega] + \m\times\left(\m\times \partial\hC(W,\m)[\omega] \right).
\end{align}
Formally estimating the linear operator $\partial_W \RR(W,u)$ gives that for all $\omega\in C^{\alpha/2}([0,T])$:
\begin{align}\label{eq:continuity_D1F}
\begin{split}
\norm{\partial_W\RR(W,u)[\omega]}{C^{\alpha/2, \alpha}(D_T)} \leq
&\left(1+\norm{e^{\Im{W}}}{C^{\alpha/2}([0,T])}\right)^2 
\left(1+\norm{\bg}{C^{2+\alpha}(D)}\right)^4\\
&\quad\left(1+\norm{\bM^0+u}{C^{\alpha/2,1+\alpha}(D_T)}\right)^3
\norm{\omega}{C^{\alpha/2}([0,T])}.
\end{split}
\end{align}
The exponential dependence on $\Im{W}$ comes from the exponential behavior of sine and cosine in imaginary direction.
\revision{It can easily be proved, using $\norm{\Im{W}}{\W} \leq \eps$, that 
\begin{align}\label{eq:bound_Holder_norm_exp_Im_W}
	\norm{e^{\Im{W}}}{C^{\alpha/2}([0,T])} \lesssim e^{\eps}(1+\eps).
\end{align}
}

For $\bv\in C^{1+\alpha/2, 2+\alpha}(D_T)$, we get
\begin{align}\label{eq:d2R_law}
\begin{split}
\partial_{\m} \tRR(W, \m)[\bv]
&= \partial_t \bv - \Delta \bv - \bv\times \Delta \m - \m\times \Delta \bv + 2(\nabla \bv:\nabla\m) \m + \left(\nabla \m:\nabla \m\right) \bv \\
&- \left( \bv\times \hC(W, \m) + \m\times \hC(W, \bv)\right)\\
&- \left(\bv\times\left(\m\times \hC(W, \m)\right) + \m\times\left(\bv\times \hC(W, \m)+\m\times \hC(W, \bv)\right)\right),
\end{split}
\end{align}
and continuity of $\partial_u \RR(W, u) = \partial_{\m} \tRR(W, \bM^0+u)$ follows as for $\partial_W \RR(W, \m)$.
\end{proof}

\begin{proof}[Proof of \emph{(ii)} in Assumption \ref{assume:assumptions_IFTh}] 
While we are only interested in the case $W\in \W_{\R}$, $u\in\U_{\R}$ such that $\RR(W,u)=0$, let us consider the more general case \eqref{eq:SC_DuR_homeo_LLG} for future use.
Consider $\bm{f}\in R$ (the residuals space) and the problem:
\begin{align*}
\begin{cases}
\partial_u \RR(W_*, u_*)[\bv] &= \bm{f}\qquad \textrm{in } D_T,\\
\partial_n \bv &= \b{0}\qquad  \textrm{on } [0,T]\times \partial D,\\
\bv(0,\cdot)&=\bm{0}\qquad  \textrm{on } D.
\end{cases}
\end{align*}
With the aim of applying the well-posedness result\revision{~\cite[Chapter~VII, \S~10, Theorem 10.3]{ladyzhenskaya1968linear}}, we note that the principal part of $\partial_u \RR(W, u)[\bv]$ is $-\Delta \bv - u\times \Delta \bv$. 
We now show that for any $(t,\bx)\in D_T$ and $\bm{w}\in \C^3$,
\begin{align}\label{eq:condtion_well_posed_PDE}
    \Re{\dual{\bm{w}+u(t,\bx)\times \bm{w}, \bm{w}}} \geq \frac{1}{2} \norm{\bm{w}}{}^2,
\end{align}
where here $\norm{\cdot}{}$ and $\dual{\cdot, \cdot}$ denote respectively the standard norm and scalar product on $\C^3$.
Indeed, 
$
    \Re{\dual{\bm{w} + u(t,\bx)\times \bm{w}, \bm{w}}}
    = \norm{\bm{w}}{}^2 + \Re{\dual{u(t,\bx)\times\bm{w}, \bm{w}}}
$
and algebraic manipulations lead to the identity
$
	\Re{\dual{u(t,\bx)\times\bm{w} , \bm{w}}}
	=2 \dual{\Im{\bm{w}}\times \Re{\bm{w}}, \Im{u(t,\bx)}},
$
which implies the estimate
\begin{align*}
\lvert\Re{\dual{u(t,\bx)\times\bm{w} , \bm{w}}}\rvert \leq 2\norm{\Im{u(t,\bx)}}{L^\infty(D_T)} \norm{\bm{w}}{}^2.
\end{align*}
Thus, by virtue of Assumption \eqref{eq:SC_DuR_homeo_LLG}, we obtain \eqref{eq:condtion_well_posed_PDE}.
This shows that $\partial_u \RR(W, u)$ is parabolic in the sense of\revision{~\cite[Chapter~VII, \S~10, Theorem 10.3]{ladyzhenskaya1968linear}}
and hence, we obtain that $\partial_u \RR(W, u)$ admits a continuous inverse. 
Together with its continuity, this implies that it is a homeomorphism.
The norm of the inverse can be estimated as 
\begin{align}\label{eq:stability_inv_D2F}
\norm{\partial_u \RR(W, u)^{-1}\left[\b{f}\right]}{C^{1+\alpha/2, 2+\alpha}(D_T)} 
\leq C_{\rm stab}(W, u) \norm{\bm{f}}{C^{\alpha/2, \alpha}(D_T)},
\end{align}
where $C_{\rm stab}(W, u)>0$ is independent of $\b{f}$ (but depends on $W$ and $u$).
\end{proof}
\begin{proof}[Proof of Assumption \ref{assume:bounds_d1R_d2Rinv}]
The continuity of $\partial_W \RR(W, u)$ follows from \eqref{eq:continuity_D1F}, \eqref{eq:bound_Holder_norm_exp_Im_W} with  
\begin{align*}
\GG_1(s) = 
\left(1+e^{\eps_W}(1+\eps_W) \right)^2 
\left(1+\norm{\bg}{C^{2+\alpha}(D)}\right)^4
\left(1+\norm{\bM^0}{C^{2+\alpha}(D)}+s\right)^3,
\end{align*}
where $\eps_W >0$.
The bound on $\left(\partial_u \RR(W, u)\right)^{-1}$ is already proved in \eqref{eq:stability_inv_D2F} with $\eps_u=\frac{1}{4}$ and $\GG_2 = C_{\rm stab}$. The fact that $C_{\rm stab}$ depends on $\UU(W)$ only through $\norm{\UU(W)}{\U}$ is implied by the sufficient condition for well posedness in \eqref{eq:SC_DuR_homeo_LLG}.
\end{proof}

We recall that, as shown in Section \ref{sec:param_regularity_general}, the implicit function theorem and Theorem \ref{th:global_holo_ext} prove the existence of $\eps>0$ such that for any $\by\in\XX_\R$ there exists a holomorphic map $\UU:B_{\eps}(W(\by)\rightarrow \U$ such that $\RR(W,\UU(W))=0$ for all $W\in B_{\eps}(W(\by))$. The function $\UU$ is bounded by a constant $C_{\eps}>0$ again independent of $\by$.

Moreover,  Assumption \ref{assume:bounds_d1R_d2Rinv} implies the bound \eqref{eq:estim_norm_DU} on the differential $\UU'(W)$ as a function of $\UU(W)$ through $\norm{\UU(W)}{\U}$ under the assumption that $W\in B_{\eps}(W(\by))$ in $\W$.

\subsection{Proof of Assumption \ref{assume:sparsity} and estimates of derivatives of parameter-to-solution map} \label{sec:uniform_holo_Holder}

Let us now estimate the derivatives of the parameter-to-solution map.
While this is a standard technique established already in~\cite{Cohen2011Analytic}, it turns out this will not be quite sharp enough to obtain dimension independent convergence of the sparse grid approximation. In Section~\ref{sec:holomorphic_regularity_L1_small}, we present a possible way to resolve this in the future.

Let us show that Assumption \ref{assume:sparsity} holds for the present problem. Recall the definitions of parameter spaces in~\eqref{eq:def_param_space_Holder} and~\eqref{eq:def_param_space_R_Holder}.
\begin{lemma}\label{lemma:sparsity_param_to_sol_map}
Assumption \ref{assume:sparsity} holds in the present setting. In particular, we can choose $\brho = (\rho_n)_{n\in\N}$ such that 
\begin{align}\label{eq:choice_rho_Holder}
    \norm{\brho}{\XX} \leq \frac{\eps}{2}.
\end{align}
\end{lemma}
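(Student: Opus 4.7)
The goal is to exhibit, for each prescribed $\eps > 0$, a sequence of radii $\brho$ such that complex perturbations $\bz \in \bB_{\brho}(\by)$ translate into perturbations $W(\bz)$ staying within an $\eps$-ball around $W(\by)$ in the Wiener space $\W$. My plan is to derive a linear continuity bound
\begin{align*}
\|W(\bz) - W(\by)\|_{\W} \;\leq\; C_0 \, \|\bz - \by\|_{\XX, \alpha}
\end{align*}
with an explicit absolute constant $C_0$, and then absorb it into the prefactor of $\eps$. Once this is done, the elementary observation $|z_{\ell,j} - y_{\ell, j}| < \rho_{\ell,j}$ for all $(\ell, j)$ yields $\|\bz - \by\|_{\XX, \alpha} \leq \|\brho\|_{\XX}$, and the choice $\|\brho\|_{\XX} \leq \eps/2$ (up to the normalization encoded in the definition of $\|\cdot\|_{\XX,\alpha}$) gives $\|W(\bz) - W(\by)\|_{\W} \leq \eps$.

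I would begin by expanding the difference via the Lévy--Ciesielski formula \eqref{eq:def_param_LC},
\begin{align*}
W(\bz) - W(\by) \;=\; \sum_{\ell = 0}^{\infty} \sum_{j=1}^{\lceil 2^{\ell-1} \rceil} (z_{\ell,j} - y_{\ell,j})\, \eta_{\ell, j},
\end{align*}
and estimate the $C^{\alpha/2}([0,T])$-norm level by level. At each level $\ell \geq 1$, the Faber--Schauder hats $\eta_{\ell, 1}, \ldots, \eta_{\ell, 2^{\ell-1}}$ have pairwise disjoint supports and vanish at the endpoints of those supports. I would combine these two facts to obtain the key per-level bound
\begin{align*}
\Bigl\| \sum_j c_{\ell,j}\, \eta_{\ell, j} \Bigr\|_{C^{\alpha/2}([0,T])} \;\leq\; 2\, \max_{j}|c_{\ell,j}|\; \|\eta_{\ell, j}\|_{C^{\alpha/2}([0,T])}.
\end{align*}
For $s, t$ lying in the same hat's support this is immediate; the substantive case is $s, t$ in distinct supports, for which I would insert the common boundary point $b$ between them and use that both involved hats vanish at $b$, together with $|s - b|,\, |t - b| \leq |s - t|$ to control each half by the single-hat Hölder seminorm times $|s - t|^{\alpha/2}$.

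Invoking the Faber--Schauder estimate from Remark \ref{rk:choice_alpha} (applied with $\alpha$ replaced by $\alpha/2$),
\begin{align*}
\|\eta_{\ell, j}\|_{C^{\alpha/2}([0,T])} \;\lesssim\; 2^{-(1 - \alpha)\ell/2},
\end{align*}
summing the per-level bounds over $\ell \in \N_0$ yields
\begin{align*}
\|W(\bz) - W(\by)\|_{\W} \;\leq\; C_0 \sum_{\ell \in \N_0} \max_{j}|z_{\ell, j} - y_{\ell, j}|\; 2^{-(1-\alpha)\ell/2} \;=\; C_0\, \|\bz - \by\|_{\XX, \alpha},
\end{align*}
which completes the continuity claim. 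The only genuinely nonobvious ingredient is the disjoint-support trick for the Hölder seminorm; the rest is a straightforward bookkeeping that the norm on $\XX$ was designed specifically to match. I expect the main (mild) obstacle to be tracking the absolute constant $C_0$ so that the statement of the lemma comes out with the clean bound $\eps/2$, which may require that the $\W$-norm be taken as a suitable multiple of the $C^{\alpha/2}$-norm.
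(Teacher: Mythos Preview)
Your approach is correct and reaches the same conclusion as the paper, but the key per-level estimate is obtained differently. The paper bounds $\bigl\|\sum_j c_{\ell,j}\eta_{\ell,j}\bigr\|_{C^{\alpha/2}}$ by Banach-space interpolation between $C^0$ and $C^1$: disjoint supports make the $C^0$- and $C^1$-norms of the level-$\ell$ sum trivially equal to $\max_j|c_{\ell,j}|$ times the corresponding norm of a single hat, and then the interpolation inequality $\|f\|_{C^{\alpha/2}}\lesssim \|f\|_{C^0}^{1-\alpha/2}\|f\|_{C^1}^{\alpha/2}$ combines these with the explicit bounds $\|\eta_{\ell,j}\|_{C^0}\le 2^{-\ell/2}$, $|\eta_{\ell,j}|_{C^1}\le 2^{\ell/2}$. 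You instead estimate the H\"older seminorm of the sum directly, exploiting that the hats vanish at the endpoints of their supports to split a cross-support increment through an intermediate zero, and then quote the single-hat bound $\|\eta_{\ell,j}\|_{C^{\alpha/2}}\lesssim 2^{-(1-\alpha)\ell/2}$ from Remark~\ref{rk:choice_alpha}. Your route is more elementary (no interpolation theory needed) and makes the constant more transparent; the paper's route is more systematic and avoids the endpoint-vanishing observation, which is specific to this basis. Both yield the same weight $2^{-(1-\alpha)\ell/2}$ matching the definition of $\|\cdot\|_{\XX,\alpha}$.
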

\begin{proof}
Fix $\by\in \XX_{\R}$ and $\bz\in\bB_{\bm{\rho}}(\by)$ (i.e.$\seminorm{z_n-y_n}{} < \rho_n$ for all $n\in\N$). Let us prove that $W(\bz)\in B_\eps(W(\by))$.
By linearity,
$ W(\bz,\cdot) - W(\by, \cdot) = \sum_{n\in\N} (z_n-y_n) \eta_n(\cdot)$.
Recalling the hierarchical indexing \eqref{eq:re-indexing_LC} and by a triangle inequality, we obtain
\begin{align*}
\norm{W(\bz,\cdot) - W(\by,\cdot)}{C^{\alpha/2}([0,T])} \leq 
\sum_{\ell\in\N_0} \norm{\sum_{j=1}^{\lceil 2^{\ell-1} \rceil} {(z_{\ell, j}-y_{\ell,j}) } \eta_{\ell,j} }{C^{\alpha/2}([0,T])}.
\end{align*}
The terms on the right-hand side can be estimated by Banach space interpolation and the fact that all basis functions $\eta_{\ell,j}$ on the same level have disjoint supports, i.e.,
\begin{align*}
	&\norm{\sum_{j} (z_{\ell, j}-y_{\ell,j}) \eta_{\ell,j} }{C^{\alpha/2}([0,T])} 
	\leq \norm{\sum_{j} (z_{\ell, j}-y_{\ell,j}) \eta_{\ell,j} }{C^0([0,T])}^{1-\alpha/2}
	\norm{\sum_{j} (z_{\ell, j}-y_{\ell,j}) \eta_{\ell,j} }{C^1([0,T])}^{\alpha/2}\\
	\leq & \big(\max_j \seminorm{z_{\ell, j}-y_{\ell,j}}{} \norm{\eta_{\ell,j} }{C^{0}([0,T])}\big)^{1-\alpha/2}\\
	 & \big(\max_j \seminorm{z_{\ell, j}-y_{\ell,j}}{}\norm{\eta_{\ell,j} }{C^0([0,T])}+\max_j \seminorm{z_{\ell, j}-y_{\ell,j}}{}\seminorm{\eta_{\ell,j} }{C^{1}([0,T])}\big)^{\alpha/2}.
\end{align*}
Recalling that $\norm{\eta_{i(\ell)}}{C^{0}([0,T])} \leq 2^{-\ell/2}$ and $\seminorm{\eta_{i(\ell)}}{C^{1}([0,T])} \leq  2^{\ell/2}$ (see Remark~\ref{rk:choice_alpha}), we find 
\begin{align*}
\norm{\sum_{j} (z_{\ell, j}-y_{\ell,j}) \eta_{\ell,j} }{C^{\alpha/2}([0,T])}
\leq \max_j \seminorm{z_{\ell, j}-y_{\ell,j}}{} (2^{-\ell/2}+2^{-(1-\alpha)\ell/2}).
\end{align*}
With $\bz\in \bB_{\brho}(\by)$, we obtain $\norm{W(\bz,\cdot) - W(\by,\cdot)}{C^{\alpha/2}([0,T])} < \eps $, which gives the statement.
\end{proof}
An example of valid sequence of holomorphy radii is
\begin{align}\label{eq:def_rho_n}
\rho_n = \eps 2^{\frac{(1-\alpha)\lceil\log_2(n)\rceil}{2}}\qquad \forall n\in\N.
\end{align}
Having so concluded that for any $\by\in\XX_\R$ the parameter-to-solution map $\MM\circ W:\bB_{\bm{\rho}}(\by) \rightarrow \U$ is holomorphic and uniformly bounded, we can estimate its derivatives as in Theorem \ref{th:bound_mix_derivative}.
\begin{proposition}
Consider $\m = \bM^0 + u :\XX_{\R} \rightarrow C^{1+\alpha/2, 2+\alpha}(D_T)$, the parameter-to-solution map of the parametric LLG equation with Hölder spaces ($\XX_\R$ and $C^{1+\alpha/2, 2+\alpha}(D_T)$ defined in \eqref{eq:def_param_space_R_Holder} and \eqref{eq:space_Holder_1_time_2_space} respectively).
Fix $\eps>0$ as in Theorem \ref{th:global_holo_ext} and let $\bm{\rho} = (\rho_n)_{n\in\N}$ a positive sequence that satisfies \eqref{eq:choice_rho_Holder}.
Then, for any $n\in\N$, $\bnu = \left(\nu_i\right)_{i=1}^n \in \N^n$, it holds that
\begin{align}\label{eq:bound_mix_derivatives_Holder}
\norm{\partial^{\bnu} \m(\by)}{C^{1+\alpha/2, 2+\alpha}(D_T)} 
\leq  \prod_{j=1}^n \nu_j!\rho_j^{-\nu_j} C_{\eps}\qquad \forall \by\in\XX_{\R},
\end{align}
where $C_{\eps}>$ 0 from Theorem \ref{th:global_holo_ext} is independent of $\bnu$ or $\by$.
\end{proposition}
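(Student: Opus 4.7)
The plan is to read this proposition as a direct corollary of the abstract derivative estimate Theorem \ref{th:bound_mix_derivative}, applied to the concrete Hölder-space formulation of the parametric LLG problem in \eqref{eq:forward_map_holder_law}--\eqref{eq:forward_map_holder}. All four standing assumptions of the abstract framework have already been verified in the preceding subsections, so the only task here is to collect them and invoke Cauchy's integral formula on the polydisc.

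First I would record the assumption bookkeeping. Assumption \ref{assume:uniform_bounded_solution} is supplied by Theorem \ref{th:Holder_sample_paths}, which gives $\norm{\m(\by)}{1+\alpha/2,2+\alpha}\le C_r$ uniform in $\by\in\XX_\R$ (hence a uniform bound on $u(\by)=\m(\by)-\bM_0$ in $\U$). Assumptions \ref{assume:assumptions_IFTh} and \ref{assume:bounds_d1R_d2Rinv} are exactly the content of Lemma \ref{lemma:hyp_implicit function_Holder}, including the explicit forms of $\GG_1$ and $\GG_2$ stated there. Assumption \ref{assume:sparsity} is Lemma \ref{lemma:sparsity_param_to_sol_map}, which moreover shows that any real positive sequence $\bm{\rho}$ satisfying \eqref{eq:choice_rho_Holder} is admissible in the sense that $W(\bz)\in B_\eps(W(\by))$ whenever $\bz\in\bB_{\bm{\rho}}(\by)$.

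With these in place, Theorem \ref{th:global_holo_ext} delivers a single $\eps>0$ (independent of $\by$) together with a holomorphic extension of $W\mapsto\UU(W)$ to $B_\eps(W(\by))\subset\W$ satisfying $\norm{\UU(W)}{\U}\le C_\eps$ uniformly. Composing with the complex-linearly extended parametrisation $\bz\mapsto W(\bz)$ from \eqref{eq:def_param_LC}, the map $\bz\mapsto\m(\bz)=\bM_0+\UU(W(\bz))$ is holomorphic and uniformly bounded on the polydisc $\bB_{\bm{\rho}}(\by)$ from \eqref{eq:def_domain_param_holo} by a constant depending only on $\eps$ and $\bM_0$, which I still denote by $C_\eps$ after redefinition.

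Finally, Cauchy's integral formula applied coordinate-wise on the polydisc gives
\begin{align*}
\partial^{\bnu}\m(\by)=\frac{\bnu!}{(2\pi\i)^n}\oint_{|z_1-y_1|=\rho_1}\!\cdots\!\oint_{|z_n-y_n|=\rho_n}\frac{\m(z_1,\dots,z_n,y_{n+1},\dots)}{\prod_{j=1}^n(z_j-y_j)^{\nu_j+1}}\,\rmd z_n\cdots\rmd z_1,
\end{align*}
and estimating the integrand by $C_\eps\prod_{j=1}^n\rho_j^{-(\nu_j+1)}$ against contour lengths $\prod_{j=1}^n 2\pi\rho_j$ yields \eqref{eq:bound_mix_derivatives_Holder}. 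There is no genuine obstacle at this stage, since this is precisely the argument encapsulated in Theorem \ref{th:bound_mix_derivative}; the only anticipated difficulty is cosmetic, namely translating between the abstract notation of Section \ref{sec:proofstrategy} and the concrete LLG setting of Section \ref{sec:holomoprhic_regularity_Holder}. The $L^2_{\bmu}(\XX_\R;\U)$ variant follows from the pointwise bound by dominated convergence against the Gaussian measure.
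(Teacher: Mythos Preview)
Your proposal is correct and follows essentially the same approach as the paper: the paper does not give a separate proof for this proposition but simply states that, having verified Assumptions \ref{assume:uniform_bounded_solution}--\ref{assume:sparsity} in the preceding results (Theorem \ref{th:Holder_sample_paths}, Lemma \ref{lemma:hyp_implicit function_Holder}, Lemma \ref{lemma:sparsity_param_to_sol_map}), the derivative bound follows by invoking Theorem \ref{th:bound_mix_derivative}. Your write-up is in fact more detailed than the paper's own treatment, which consists of the single sentence ``we can estimate its derivatives as done in Theorem \ref{th:bound_mix_derivative}.''
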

\begin{remark} Note that we essentially proved ``$(\bm{b}, \xi, \delta, X)$-holomorphy''~\cite[Definition 4.1]{dung2023analyticity} for the Stochastic LLG equation in the case of a Hölder-valued parameter-to-solution map. 
However, this regularity is not sufficient to apply the theory in~\cite{dung2023analyticity}, as the summability coefficient is $p=2$, which lies out of the range $(0, \frac{2}{3})$ considered in~\cite{dung2023analyticity}. This fact is analogous to what happens in our analysis. \end{remark}

\section{Holomorphy of a simplified parameter-to-solution map with Lebesgue sample paths}\label{sec:holomorphic_regularity_L1_small}
In this section, we aim at proving stronger regularity and sparsity properties of the random LLG parameter-to-solution map again based on the general strategy outlined in Section \ref{sec:proofstrategy}.
A key observation is that these properties depend on the Banach spaces chosen for the sample paths of the random coefficients (in our case, the Wiener process) and the sample paths of the solutions (in our case, the magnetizations). In this case, we show that using \emph{Lebesgue} spaces for the time variable is superior to using Hölder spaces.

Because of the nonlinear nature of the random LLG equation, the results hold only for a simplified version of the stochastic input. We make the following modelling assumptions:
\begin{itemize}
\item The sample paths of the Wiener process $W$ are ``small''. This is justified e.g. for small final times $T \ll 1$ with high probability;
\item The gradient $\nabla \bg$ is ``small'', meaning that the stochastic noise is spatially uniform. This is justified for small domain sizes (samples in real-world applications are often in the nano- and micrometer range).
\end{itemize}
Altogether, we end up with the following simplifications in the random LLG residual (defined in \eqref{eq:forward_map_holder_law}):
\begin{align*}
\nabla \m\times \nabla \bg \approx 0,\qquad
\sin(W)&\approx W,\qquad
1-\cos(W)\approx \frac{W^2}{2}\approx 0.
\end{align*}
Consequently, we approximate $\hC(W,\m)$ defined in \eqref{eq:def_CHat} with the first order expansion
\begin{align*}
\tilde{\CC}(W,\m) \coloneqq W \m\times \Delta \bg,
\end{align*}
where $\bg\in C^{2+\alpha}(D)$.
This term appears in the \emph{simplified random LLG residual}
\begin{align}\label{eq:forward_map_lebesgue_law}
\begin{split}
\RR_s(W,u) &\coloneqq \tRR_s(W, \bM^0+u), \qquad \textrm{where }\\
\tRR_s(W, \m) &\coloneqq \partial_t \m -\Delta \m - \m\times \Delta \m + \left( \nabla \m:\nabla \m\right) \m - \m \times \tC(W,\m) +\\
& +\m\times\left(\m\times \tC(W,\m)\right).
\end{split}
\end{align}
Observe that the magnetization corresponding to $W(\omega, \cdot)$ is $\m(\omega) = \bM^0 + u(\omega)$ for any $\omega\in\Omega$.

In order to define the space for the coefficients, we again start from the parameters: Define, for  $1<q<\infty$,
$
\XX = \XX^q \coloneqq \set{\bz\in \C^{\N}}{\norm{\bz}{\XX^q} < \infty}$, where $\norm{\bz}{\XX^q} \coloneqq \sum_{\ell\in\N_0} \seminorm{\bz_{\ell}}{\ell^q}  2^{-\ell(1/2+1/q)}$, and we denoted $\by_{\ell} = (y_{\ell, 1},\dots, y_{\ell, \lceil2^{\ell-1}\rceil})$.
We then define the space of (complex) coefficients through the Lévy-Ciesielski expansion \eqref{eq:def_param_LC}:
$\W = \set{W(\bz, \cdot):[0,T]\rightarrow \C}{\bz\in \XX}$.
For real parameters, we fix $\theta>0$ and let
\begin{align}\label{eq:def_real_param_lebesgue}
\XX_{\R} = \XX(\alpha,\theta) \coloneqq \set{\by\in\R^{\N}}{\norm{\by}{\XX(\alpha)} < \theta },
\end{align}
where $\XX(\alpha)$ was defined in \eqref{eq:def_param_space_Holder}.
\begin{lemma}\label{lemma:Lp_paths_W} 
For fixed $1<q < \infty$ and $\theta>0$, there holds,
\begin{align*}
\W \subset L^q(0,T)\quad\text{and}\quad 
\W_{\R} \subset \set{W\in C^{\alpha}([0,T]; \R)}{\norm{W}{C^\alpha([0,T])} < \theta}.
\end{align*}
\end{lemma}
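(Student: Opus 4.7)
The plan is to prove each of the two inclusions by working directly with the Lévy--Ciesielski expansion \eqref{eq:def_param_LC} and exploiting the pairwise disjoint supports of the Faber--Schauder hats $\eta_{\ell,j}$ at each fixed level $\ell$. Both statements then reduce to a level-by-level estimate followed by a triangle inequality across levels, with the norm on $\bz$ or $\by$ chosen precisely so that the resulting geometric weights match.

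For the $L^q$ statement, I would first apply the triangle inequality in $L^q([0,T])$ across the levels $\ell\in\N_0$ to reduce to estimating $\bigl\|\sum_{j=1}^{\lceil 2^{\ell-1}\rceil} z_{\ell,j}\eta_{\ell,j}\bigr\|_{L^q}$ for each level separately. Since the hats at a common level have pairwise disjoint supports, the level-$\ell$ $L^q$-norm decomposes diagonally as
\[
\bigl\|\sum_{j} z_{\ell,j}\eta_{\ell,j}\bigr\|_{L^q}^q \;=\; \sum_{j} |z_{\ell,j}|^q \,\|\eta_{\ell,j}\|_{L^q}^q,
\]
and combining $\|\eta_{\ell,j}\|_{L^\infty}\leq 2^{-(\ell+1)/2}$ with $|\supp \eta_{\ell,j}|=2^{1-\ell}$ (both recorded in Section~\ref{sec:from_SPDE_to_paramPDE_general}) gives $\|\eta_{\ell,j}\|_{L^q}\lesssim 2^{-\ell(1/2+1/q)}$. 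Summing the resulting $\ell^q$-norm bound $2^{-\ell(1/2+1/q)}\|\bz_\ell\|_{\ell^q}$ over $\ell$ reproduces exactly the weight appearing in $\|\bz\|_{\XX^q}$, closing this part.

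For the Hölder inclusion, I would reuse the interpolation technique already used in the proof of Lemma~\ref{lemma:sparsity_param_to_sol_map}: on each level, use disjoint supports together with Banach-space interpolation between $C^0$ and $C^1$, relying on the bounds $\|\eta_{\ell,j}\|_{C^0}\lesssim 2^{-\ell/2}$ and $|\eta_{\ell,j}|_{C^1}\lesssim 2^{\ell/2}$ from Remark~\ref{rk:choice_alpha}. Summing the level contributions reproduces the weighted sum defining $\|\by\|_{\XX(\alpha)}$; the strict inequality $\|W(\by)\|_{C^\alpha}<\theta$ is then inherited directly from the bound $\|\by\|_{\XX(\alpha)}<\theta$ built into the definition \eqref{eq:def_real_param_lebesgue} of $\XX_\R$.

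The only delicate point is the level-$\ell$ Hölder estimate: although the supports are disjoint, a Hölder quotient $|v(s)-v(t)|/|s-t|^\alpha$ can involve points $s,t$ lying in different supports. I would handle this by inserting a boundary point $s^\ast$ of the active support (where every hat in that family vanishes), splitting the quotient into two within-support differences, each controlled by the interpolated seminorm of a single $\eta_{\ell,j}$. After this step, everything reduces to routine bookkeeping of constants.
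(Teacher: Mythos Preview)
Your proposal is correct and mirrors the paper's proof closely: for the $L^q$ inclusion the paper performs the same level-by-level triangle inequality and disjoint-support decomposition (computing the exact value $\|\eta_{\ell,j}\|_{L^q}=2^{-\ell(1/2+1/q)}2^{-1/2}(2/(q+1))^{1/q}$ rather than your $L^\infty$-times-support bound, but the structure is identical), and for the H\"older inclusion the paper likewise defers to the method of Lemma~\ref{lemma:sparsity_param_to_sol_map}.

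The one small methodological difference concerns your ``delicate point''. The paper applies the $C^0$--$C^1$ interpolation inequality to the \emph{entire} level-$\ell$ sum $\sum_j c_{\ell,j}\eta_{\ell,j}$ first, and only afterwards uses disjoint supports to bound the $C^0$ and Lipschitz norms of that sum by $\max_j|c_{\ell,j}|$ times the corresponding single-hat norm. With this order of operations the cross-support H\"older quotient never appears, so your boundary-point splitting argument, while perfectly valid, is unnecessary.
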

\begin{proof}
To prove the first inclusion, fix $\bz\in \XX$ and estimate
\begin{align*}
\norm{W(\bz)}{L^q(0,T)}= 
\norm{\sum_{\ell\in\N_0}\sum_{j=1}^{\lceil2^{\ell-1}\rceil} z_{\ell,j} \eta_{\ell,j}}{L^q(0,T)}
\leq \sum_{\ell\in\N_0} \norm{\sum_{j=1}^{\lceil2^{\ell-1}\rceil} z_{\ell,j} \eta_{\ell,j}}{L^q(0,T)}.
\end{align*}
Examine one summand at a time to get, using the fact that Faber-Schauder basis functions of same level have disjoint supports,
\begin{align}\label{eq:applic_disj_basis_LCE}
	\norm{\sum_{j=1}^{\lceil2^{\ell-1}\rceil} y_{\ell,j} \eta_{\ell,j}}{L^q(0,T)}^q
	=\int_0^T \sum_{j=1}^{\lceil2^{\ell-1}\rceil} y_{\ell,j}^q \eta_{\ell,j}^q
	=\sum_{j=1}^{\lceil2^{\ell-1}\rceil} y_{\ell,j}^q \int_0^T  \eta_{\ell,j}^q
	= \seminorm{\by_{\ell}}{\ell^q}^q \norm{ \eta_{\ell,1}}{L^q(0,T)}^q.
\end{align}
Finally, we integrate~\eqref{eq:faber-shauder_basis} to obtain that
$\norm{\eta_{\ell,j}}{L^q(0,T)} = 2^{-\ell(1/2+1/q)} 2^{-1/2} \left(\frac{2}{q+1}\right)^{1/q}$, 
so we get $\norm{W(\bz)}{L^q(0,T)} \leq \norm{\bz}{\XX^q}$, which implies the first inclusion.
The second inclusion follows with the methods of the proof of Lemma~\ref{lemma:sparsity_param_to_sol_map}.
\end{proof}
Intuitively, $\W_{\R}$ can be understood as the set of  ``small'' real valued Wiener processes.
The space of solutions is chosen as
\begin{align} \label{eq:def_sol_Lebesgue}
\U &= \left\{u:D_T\rightarrow \C^3 : u\in L^q(0,T, C^{2+\alpha}(D)),\ \partial_t u \in  L^{q}(0,T, C^{\alpha}(D)),\right.\\ 
&\qquad\qquad\qquad\qquad \left. u(0,\cdot)=\b{0} \textrm{ on } D,\ \partial_{n}u=\b{0} \textrm{ on }[0,T]\times \partial D\right\},\\
\label{eq:def_real_sol_Lebesgue}
\U_{\R} &= \set{u:D_T\rightarrow \Sb^2}{u\in\U}.
\end{align}
Finally the space of residuals is chosen as 
$ R \coloneqq L^q(0,T, C^{\alpha}(D))$.
The map $\sRR$ is understood as a function between Banach spaces:
\begin{align}\label{eq:forward_map_lebesgue}
\sRR:\W\times \U \rightarrow R,
\end{align}
Observe that if $u\in\U$ for $q>1$, then $\norm{u(t)}{C^{\alpha}(D)} \leq \norm{\partial_t u}{L^1(0,T, C^{\alpha}(D))}$ for all $t\in[0,T]$. 
This implies that $u \in C^0([0,T], C^{\alpha}(D))$ and $\norm{u}{C^0([0,T], C^{\alpha}(D))} \leq \norm{u}{\U}$.
In particular, interpolation shows that for any $U\in\U$, $\norm{u}{L^{\infty}(D_T)}+\norm{u}{L^2(0,T, C^1(D))} \leq \norm{u}{\U}$.
Note that $\tC$ is bounded and linear in both arguments: For all $W\in\W, \m\in C^0([0,T], C^{\alpha}(D))$ it holds 
\begin{align}\label{eq:bound_tC}
\normV{\tC(W,\m)} \leq &\normW{W} \normM0{\m} \norm{\bg}{C^{2+\alpha}(D)}.
\end{align}

 The proof of Theorem~\ref{th:Holder_sample_paths} can be transferred to this simplified version of LLG and hence we have that there exists $\ol{C}_r=\ol{C}_r(\theta)>0$ such that 
\begin{align}\label{eq:bound_M_real_W_Lebesgue}
\norm{\UU(W)}{\U} \leq \ol{C}_{r} \qquad \forall W \in \W_{\R}.
\end{align}
This gives the validity of Assumption \ref{assume:uniform_bounded_solution} with $C_r=\ol{C}_r$ for the present problem.

\subsection{Proof of Assumptions \ref{assume:assumptions_IFTh} and \ref{assume:bounds_d1R_d2Rinv}} \label{sec:L1holo}
In order to apply the general strategy outlined in Section \ref{sec:param_regularity_general}, we need to prove Assumptions \ref{assume:assumptions_IFTh} and \ref{assume:bounds_d1R_d2Rinv} for the spaces and residual chosen at the beginning of this section.
\begin{remark}
The proof of {\em ii.} in Assumption \ref{assume:assumptions_IFTh} requires the use of an $L^q$-regularity result for the linear parabolic problem given by the operator $\partial_u \sRR (W, u):\U\rightarrow R$ which coincides with~\eqref{eq:d2R_law} but $\hC$ replaced by $\tC$.
For scalar problems, this can be found in~\cite[Section~4]{Schnaubelt2011Solvability}. Strictly speaking, however, Lemma~\ref{lemma:hyp_implicit function_Lebesgue} only holds under the assumption that~\cite{Schnaubelt2011Solvability} can be generalized to the vector valued case.
\end{remark}
We can prove, analogously to Lemma~\ref{lemma:hyp_implicit function_Holder}, the following result:
\begin{lemma}\label{lemma:hyp_implicit function_Lebesgue}
Let $\alpha\in(0,1)$, $\bg\in C^{2+\alpha}(D)$, $\bM^0\in C^{2+\alpha}(D)$ and $0<\theta<\infty$. Consider the spaces $\W, \W_{\R}, \U, \U_{\R}$ defined at the beginning of this section. 
Then, the residual $\RR_s$ defined by \eqref{eq:forward_map_lebesgue_law},\eqref{eq:forward_map_lebesgue} is a well-defined function and Assumption \ref{assume:assumptions_IFTh} holds true.  More generally, it can be proved that $\partial_u \RR_s(W,u)$ is a homeomorphism between $\U$ and $R$ if
\begin{align}\label{eq:SC_DuR_homeo_LLG_Lebesgue}
W\in \W, u\in\U : \norm{\Im{u}}{L^{\infty}(D_T)} \leq \frac{1}{4}.
\end{align}
Finally, Assumption \ref{assume:bounds_d1R_d2Rinv} holds true with $\eps_W>0$ and $\eps_u=\frac{1}{4}$ and
\begin{align*}
    \GG_1(s) &= \norm{\bg}{C^{2+\alpha}(D)} \left(1+\norm{\bM^0}{\U}+s\right)^3\\
    \GG_2(s) &=  C_{\rm stab}(\eps+\theta, s)\qquad \forall s\geq0,
\end{align*}
where $C_{\rm stab}(\norm{W}{\W}, \norm{u}{\U})>0$ is as $c_p$ in \cite[Theorem 2.5]{Schnaubelt2011Solvability}, i.e. it guarantees that \\
$\norm{\left(\partial_u \RR(W, u)\right)^{-1} f}{\U} \leq C_{\rm stab}(\norm{W}{\W}, \norm{u}{\U}) \norm{f}{R}$ for any $f\in R$, $W\in\W$, $u\in\U$.
\end{lemma}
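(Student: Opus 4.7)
The plan is to mirror the structure of the proof of Lemma~\ref{lemma:hyp_implicit function_Holder}, exploiting two simplifications specific to the Lebesgue setting: first, $\tC(W,\m)=W\,\m\times\Delta\bg$ is \emph{bilinear} in $(W,\m)$, so no sine/cosine appear and the holomorphic extension incurs no exponential factor $e^{\eps_W}$ in $\GG_1$; second, maximal $L^q$-parabolic regularity replaces the Hölder-space estimate of Lemma~\ref{lemma:parabolic_regularity}. The continuous embedding $\U\hookrightarrow C^0([0,T];C^\alpha(D))\cap L^\infty(D_T)\cap L^2([0,T];C^1(D))$ noted right after \eqref{eq:def_real_sol_Lebesgue} will be used repeatedly to turn the polynomial nonlinearities in $u$ into multipliers on $L^q([0,T];C^\alpha(D))$.

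First I would verify that $\RR_s:\W\times\U\to R$ is well-defined and continuously differentiable. Each term in $\tRR_s(W,\bM_0+u)$ is either (i) linear in $\m$ and of the required temporal/spatial order, (ii) a polynomial of total degree at most three in $\m$, or (iii) linear in $W$ and at most cubic in $\m$ via $\tC$. Using the bound \eqref{eq:bound_tC} together with the multiplicative estimate $\norm{fg}{L^q([0,T];C^\alpha(D))}\le\norm{f}{L^q([0,T];C^\alpha(D))}\norm{g}{C^0([0,T];C^\alpha(D))}$ and the embedding above, every term lands in $R$. The Fréchet derivatives are computed exactly as in \eqref{eq:cont_D1R}--\eqref{eq:d2R_law} with $\hC$ replaced by $\tC$; their continuity follows by the same multiplicative estimates and the bilinearity of $\tC$.

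For property~(ii) of Assumption~\ref{assume:assumptions_IFTh} I would isolate the principal part of $\partial_u\RR_s(W,u)=\partial_\m\tRR_s(W,\bM_0+u)$, which equals $\partial_t\bv-\Delta\bv-\m\times\Delta\bv$ modulo lower-order multiplication operators with coefficients in $C^0([0,T];C^\alpha(D))$. Under the smallness condition \eqref{eq:SC_DuR_homeo_LLG_Lebesgue}, the same algebraic computation as in \eqref{eq:condtion_well_posed_PDE} gives
\begin{align*}
\Re\langle\b{w}+\m(t,\bx)\times\b{w},\b{w}\rangle\ \ge\ \tfrac{1}{2}\norm{\b{w}}{}^2\qquad\text{for all }\b{w}\in\C^3,\ (t,\bx)\in D_T,
\end{align*}
so the principal part is uniformly strongly parabolic. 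Invoking the (vector-valued extension of the) maximal $L^q$-regularity result \cite[Theorem~2.5]{Schnaubelt2011Solvability} then yields that $\partial_u\RR_s(W,u):\U\to R$ is a homeomorphism with
\begin{align*}
\norm{(\partial_u\RR_s(W,u))^{-1}f}{\U}\ \le\ C_{\rm stab}\!\left(\norm{W}{\W},\norm{u}{\U}\right)\norm{f}{R}.
\end{align*}
This is the main obstacle: the cited result is stated for scalar equations, so one must either carry out the standard extension (freezing coefficients, perturbation, and interpolation) componentwise, or invoke a vector-valued version as already noted in the preceding remark.

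Finally, to verify Assumption~\ref{assume:bounds_d1R_d2Rinv}, I would use the explicit formula $\partial_W\RR_s(W,u)[\omega]=-\m\times\tC(\omega,\m)+\m\times(\m\times\tC(\omega,\m))$, which is a polynomial of degree at most three in $\m=\bM_0+u$ and linear in $\omega$. The bilinear bound \eqref{eq:bound_tC} together with $\norm{\m}{C^0([0,T];C^\alpha(D))}\le\norm{\bM_0}{\U}+\norm{u}{\U}$ gives
\begin{align*}
\norm{\partial_W\RR_s(W,u)[\omega]}{R}\ \le\ \norm{\omega}{L^q([0,T])}\,\norm{\bg}{C^{2+\alpha}(D)}\,(1+\norm{\bM_0}{\U}+\norm{u}{\U})^3,
\end{align*}
with no dependence on $W$ on the right-hand side (this is the key gain over the Hölder case). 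Setting $\GG_1(s)=\norm{\bg}{C^{2+\alpha}(D)}(1+\norm{\bM_0}{\U}+s)^3$ and $\GG_2(s)=C_{\rm stab}(\eps_W+\theta,s)$ (noting that $\norm{W}{\W}\le\eps_W+\theta$ uniformly on $B_{\eps_W}(W(\by))$ with $\by\in\XX_\R$ by \eqref{eq:def_real_param_lebesgue}) gives the stated dependence on $\norm{\UU(W)}{\U}$ alone, concluding Assumption~\ref{assume:bounds_d1R_d2Rinv}.
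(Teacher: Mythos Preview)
Your proposal is correct and follows exactly the approach the paper indicates: the paper does not give an explicit proof of this lemma but simply states that it ``can be proved analogously to Lemma~\ref{lemma:hyp_implicit function_Holder}'', and your plan does precisely that, correctly identifying the two simplifications (bilinearity of $\tC$ removes the exponential factor in $\GG_1$, and maximal $L^q$-regularity from~\cite{Schnaubelt2011Solvability} replaces the H\"older estimate). You also correctly flag the scalar-versus-vector issue for the cited regularity result, which is exactly the caveat the paper records in the remark preceding the lemma.
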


We recall that, as shown in Section \ref{sec:param_regularity_general}, the implicit function theorem and Theorem \ref{th:global_holo_ext} prove the existence of $\eps>0$ such that for any $\by\in\XX_\R$ there exists a holomorphic map $\UU:B_{\eps}(W(\by))\rightarrow \U$ such that $\RR(W,\UU(W))=0$ for all $W\in B_{\eps}(W(\by))$. The function $\UU$ is bounded by a constant $C_{\eps}>0$ again independent of $\by$.
Moreover, Assumption \ref{assume:bounds_d1R_d2Rinv} implies the bound \eqref{eq:estim_norm_DU} on the differential $\UU'(W)$ as a function of $\UU(W)$ through $\norm{\UU(W)}{\U}$ for all those $W\in B_{\eps}(W(\by))$ in $\W$.

\subsection{Proof of Assumption \ref{assume:sparsity} and estimates of derivatives of parameter-to-solution map}

Let us now estimate the derivatives of the parameter-to-solution map.
To this end, let us find a real positive sequence $\brho=(\rho_n)_n$ that verifies Assumption \ref{assume:sparsity}. Contrary to Section~\ref{sec:uniform_holo_Holder}, here $\brho$ depends on which mixed derivative $\partial^{\bnu}$ is considered: Given a multi-index $\bnu=(\nu_1,\dots,\nu_n)\in \N_0^n$, ${0<\delta<\frac{1}{2}}$ and $0<\gamma<1$ consider a sequence of positive numbers $\brho=\brho(\bnu, \delta, \gamma)$ defined as follows: $ \forall\ell\in\N_0, j = 1,\dots, \lceil2^{\ell-1}\rceil,$
\begin{align}\label{def:rho_L1}
\rho_{\ell, j} \coloneqq \gamma
\begin{cases}
1\qquad & \textrm{if } \nu_{\ell, j}=0\\
2^{\left(\frac{3}{2}-\delta\right)\ell} \frac{1}{r_{\ell}(\bnu)}\qquad & \textrm{if } \nu_{\ell,j}=1\\
2^{\left(\frac{1}{2}-\delta\right)\ell}\qquad & \textrm{otherwise},
\end{cases}
\end{align}
where we used the hierarchical indexing \eqref{eq:re-indexing_LC} and $r_{\ell}(\bnu) \coloneqq \#\left\{j\in 1,\dots,\lceil 2^{\ell-1}\rceil : \nu_{\ell,j}=1\right\}$. 

\begin{lemma}\label{lemma:sparsity_Lebesgue}
Consider a  multi-index $\bnu=(\nu_1,\dots,\nu_n)\in \N_0^n$, $\delta>0$ and $1<q<\frac{1}{1-\delta/2}$. There exists $0<\gamma<1$ such that defining $\brho= \brho(\bnu, \delta, \gamma)$ as in \eqref{def:rho_L1} verifies Assumption \ref{assume:sparsity}.
\end{lemma}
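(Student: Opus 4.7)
My plan is to show that, for $\gamma\in(0,1)$ chosen sufficiently small (depending on $\bnu$, $\delta$, $q$), the bound $\|W(\bz)-W(\by)\|_{L^q([0,T])}<\eps$ holds for all $\bz\in \bB_\brho(\by)$, where $\eps$ is the constant from Theorem~\ref{th:global_holo_ext}. Since the inclusion $\W\subset L^q([0,T])$ satisfies $\|W(\bz)\|_{L^q}\leq \|\bz\|_{\XX^q}$, and since Faber-Schauder basis functions at a common level $\ell$ have pairwise disjoint supports, the triangle inequality across levels together with $\|\eta_{\ell,j}\|_{L^q}\lesssim 2^{-\ell(1/2+1/q)}$ yields
$$\|W(\bz)-W(\by)\|_{L^q([0,T])}\lesssim \sum_{\ell\geq 0} 2^{-\ell(1/2+1/q)} \|\brho_\ell\|_{\ell^q},$$
where $\brho_\ell\coloneqq (\rho_{\ell,j})_{j=1}^{\lceil 2^{\ell-1}\rceil}$. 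This first step parallels the proof of Lemma~\ref{lemma:sparsity_param_to_sol_map}, with the Hölder norm replaced by $L^q$.

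Next I would split the level-wise $\ell^q$-norm according to the three cases in~\eqref{def:rho_L1}. Setting $r_\ell\coloneqq\#\{j:\nu_{\ell,j}=1\}$ and $s_\ell\coloneqq\#\{j:\nu_{\ell,j}\geq 2\}$, the definition of $\brho$ immediately gives
$$\|\brho_\ell\|_{\ell^q}^q \leq \lceil 2^{\ell-1}\rceil \gamma^q + r_\ell^{1-q}\gamma^q 2^{(3/2-\delta)\ell q} + s_\ell\gamma^q 2^{(1/2-\delta)\ell q};$$
since $q>1$ and $r_\ell\geq 1$ on the levels where the middle term is nonzero, the harmless factor $r_\ell^{1-q}\leq 1$ can be dropped. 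Taking a $q$-th root, multiplying by $2^{-\ell(1/2+1/q)}$, and using subadditivity of $x\mapsto x^{1/q}$, the bound on $\|W(\bz)-W(\by)\|_{L^q}$ splits into three series in $\ell$.

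The third step is to check summability. The ``zero'' contribution is a universal geometric series $\gamma\sum_\ell 2^{-\ell/2}$. The ``first-order'' contribution is $\gamma\sum_{\ell:r_\ell\geq 1} 2^{\ell(1-\delta-1/q)}$; the hypothesis $q<1/(1-\delta/2)$ is equivalent to $1-\delta-1/q<-\delta/2<0$, hence this series decays geometrically. The ``higher-order'' contribution is $\gamma\sum_\ell s_\ell^{1/q}2^{-\delta\ell}$, which is finite because $s_\ell=0$ for all but finitely many $\ell$ and the remaining $s_\ell$ are bounded by a constant depending only on $\bnu$. All three bounds are linear in $\gamma$, so picking $\gamma$ small enough forces the total to fall below $\eps$, which verifies Assumption~\ref{assume:sparsity}.

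The one conceptually subtle point, and the main obstacle worth emphasizing, is identifying why $q<1/(1-\delta/2)$ is precisely the right threshold: the exponents $3/2-\delta$ and $1/2-\delta$ in~\eqref{def:rho_L1} are engineered so that, once paired with the $L^q$-weight $2^{-\ell(1/2+1/q)}$ of a level-$\ell$ basis function, the potentially growing first-order radii produce geometric decay in $\ell$ exactly under this condition. The remainder is a routine bookkeeping of finite contributions on levels where $\bnu$ is nonzero.
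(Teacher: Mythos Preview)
Your proof is correct and follows the same route as the paper: apply the triangle inequality across levels, split the level-$\ell$ contribution according to the three cases in~\eqref{def:rho_L1}, and verify that each yields a sum linear in $\gamma$ which is finite under $q<1/(1-\delta/2)$. The only differences are cosmetic---you use the disjoint-support $\ell^q$ norm within a level where the paper uses the cruder $\ell^1$ sum $\sum_j\rho_{\ell,j}$, and for the higher-order term you invoke finiteness of $\{\ell:s_\ell>0\}$ where the paper bounds $s_\ell\le 2^\ell$; note that the latter choice makes the paper's $\gamma$ independent of $\bnu$, a uniformity not asserted in the lemma but implicitly relied upon in Section~\ref{sec:improved_profit} (your estimate recovers it by replacing $s_\ell^{1/q}$ with $2^{\ell/q}$).
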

\begin{proof}
Let $\by\in \XX_{\R}$ and $\bz\in \bB_{\brho}(\by)$. 
 (i.e. $\seminorm{z_n-y_n}{} \leq \rho_n$ for all $n\in\N$).
A triangle inequality yields: 
$ \norm{W(\bz)-W(\by)}{L^q(0,T)} 
\leq \sum_{\ell\in\N_0} \sum_{j=1}^{\lceil 2^{\ell-1}\rceil} \seminorm{z_{\ell,j}-y_{\ell,j}}{}\norm{\eta_{\ell,j}}{L^q(0,T)}.$
For the Faber-Schauder basis functions \eqref{eq:faber-shauder_basis}, $\norm{\eta_{\ell,j}}{L^q(0,T)} \leq 2^{-(1/q+1/2)\ell}$ for any $\ell\in\N_0$ and $j=1,\dots, \lceil2^{\ell-1}\rceil$.
Together with the fact that $\bz\in\bB_{\brho}(\by)$, this gives
\begin{align}\label{eq:linear_Lq_estim_LCE}
\norm{W(\bz)-W(\by)}{L^q(0,T)}
\leq \sum_{\ell\in\N_0} 2^{-(1/q+1/2)\ell} \sum_{j=1}^{\lceil 2^{\ell-1}\rceil} \rho_{\ell, j}.
\end{align}
By the definition of $\brho$, we may write
\begin{align}\label{eq:estim_sum_Lq_rho}
\sum_{j=1}^{\lceil 2^{\ell-1}\rceil} \rho_{\ell, j} 
&= \gamma  \left( 
\#\left\{i: \nu_{\ell, i}=0\right\}
+2^{\left(\frac{3}{2}-\delta\right)\ell} \frac{1}{r_{\ell}(\bnu)} r_{\ell}(\bnu)
+ 2^{\left(\frac{1}{2}-\delta\right)\ell}\#\left\{i: \nu_{\ell, i}>1\right\}
\right).
\end{align}
Trivially, $\#\left\{i: \nu_{\ell, i}=0\right\} \leq 2^{\ell}$ and $\#\left\{i: \nu_{\ell, i}>1\right\} \leq 2^{\ell}$. 
This, together with \eqref{eq:linear_Lq_estim_LCE} and \eqref{eq:estim_sum_Lq_rho} yields
\begin{align*}
\norm{W(\bz)-W(\by)}{L^q(0,T)}
\leq \gamma \sum_{\ell\in\N_0} \big(2^{-(1/q-1/2)\ell} + 2^{-\delta \ell /2} + 2^{-\delta \ell /2}\big).
\end{align*}
Which is finite if $1<q<\frac{1}{1-\delta/2}$.
Thus, there exists $\gamma>0$ such that $W(\bz)\in B_{\eps}(W(\by))$.
\end{proof}
Having so concluded that for any $\by\in\XX_\R$ the parameter-to-solution map ${\MM\circ W:\bB_{\bm{\rho}}(\by)\rightarrow \U}$ is holomorphic and  uniformly bounded, we can estimate its derivatives as in Theorem \ref{th:bound_mix_derivative}.
\begin{proposition}\label{th:bound_mix_derivative_Lebesgue}
Consider $\m = \bM^0+u:\XX_{\R}\rightarrow \bM^0+\U_{\R}$, the parameter-to-solution map of the parametric LLG equation defined in the beginning of this section, where $\XX_{\R}$ and $\U_{\R}$ are defined in \eqref{eq:def_real_param_lebesgue}, \eqref{eq:def_real_sol_Lebesgue} respectively.
Fix $\eps>0$ as in Theorem \ref{th:global_holo_ext}, let $\delta>0$, $1<q<\frac{1}{1-\delta/2}$.
Fix a multi-index $\bnu = \left(\nu_i\right)_{i=1}^n \in \N^n_0$ for $n\in\N$.
Define the positive sequence $\bm{\rho} = (\rho_n)_{n\in\N}$ as in \eqref{def:rho_L1} and choose $0<\gamma<1$ such that Assumption \ref{assume:sparsity} holds.
Then, it holds that
\begin{align}\label{eq:bound_mix_derivatives_Lebesgie}
\norm{\partial^{\bnu} \m(\by)}{\U} 
\leq  \prod_{j=1}^n \nu_j!\rho_j^{-\nu_j} C_{\eps}\qquad \forall \by\in\XX_{\R},
\end{align}
where $C_{\eps}>$ 0 from Theorem \ref{th:global_holo_ext} is independent of $\bnu$ or $\by$.
\end{proposition}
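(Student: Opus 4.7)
The plan is to apply Theorem~\ref{th:bound_mix_derivative} directly: all four abstract hypotheses needed for that theorem have been verified earlier in this section in the Lebesgue setting. Specifically, Assumption~\ref{assume:uniform_bounded_solution} follows from estimate~\eqref{eq:bound_M_real_W_Lebesgue} (with $C_r=\overline{C}_r$), Assumptions~\ref{assume:assumptions_IFTh} and~\ref{assume:bounds_d1R_d2Rinv} are contained in Lemma~\ref{lemma:hyp_implicit function_Lebesgue}, and Assumption~\ref{assume:sparsity} is exactly the content of Lemma~\ref{lemma:sparsity_Lebesgue} for the sequence $\brho$ defined in~\eqref{def:rho_L1}, with $\gamma\in(0,1)$ chosen small enough so that the bound in the proof of Lemma~\ref{lemma:sparsity_Lebesgue} is at most $\eps$.

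Given these ingredients, Theorem~\ref{th:global_holo_ext} produces an $\eps$-neighbourhood $B_{\eps}(W(\by))\subset \W$, independent of $\by\in\XX_\R$, on which the implicit solution map $\UU$ extends holomorphically and satisfies a uniform bound $\norm{\UU(W)}{\U}\leq C_\eps$. Composing with the (entire) parametric coefficient map $\bz\mapsto W(\bz)$ and using Lemma~\ref{lemma:sparsity_Lebesgue}, the map $\bz\mapsto \UU(W(\bz))$ is holomorphic on the polydisc $\bB_{\brho}(\by)$ defined in~\eqref{eq:def_domain_param_holo}, and uniformly bounded there by $C_\eps$.

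With holomorphy and uniform boundedness on $\bB_\brho(\by)$ established, the estimate~\eqref{eq:bound_mix_derivatives_Lebesgie} follows from Cauchy's integral formula applied coordinate by coordinate: for each multi-index $\bnu\in\N_0^n$, one integrates over the distinguished boundary of the polydisc of radii $\rho_1,\dots,\rho_n$ centered at $\by$, which yields
\begin{align*}
\norm{\partial^{\bnu} \m(\by)}{\U}=\norm{\partial^{\bnu}(\bM_0+u)(\by)}{\U} \leq \prod_{j=1}^n \nu_j!\,\rho_j^{-\nu_j}\, C_\eps,
\end{align*}
since $\bM_0$ does not depend on $\by$ and hence contributes only when $\bnu=\b{0}$ (in which case the bound is trivially contained in $C_\eps$ after enlarging the constant). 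This is precisely the statement. No new analytical work is required beyond invoking Theorem~\ref{th:bound_mix_derivative}; the only subtlety, and the step that would deserve a brief comment, is that the sequence $\brho$ here depends on $\bnu$ (in contrast to the Hölder setting of~\eqref{eq:choice_rho_Holder}), which is legitimate because Theorem~\ref{th:global_holo_ext} already gives a $\by$-uniform neighbourhood of holomorphy and Cauchy's formula may then be applied on \emph{any} polydisc contained in it, in particular on the $\bnu$-adapted one whose radii are dictated by~\eqref{def:rho_L1}.
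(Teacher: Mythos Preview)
Your proposal is correct and follows exactly the approach the paper takes: the paper does not give an explicit proof but simply remarks, just before stating the proposition, that ``we can estimate its derivatives as done in Theorem~\ref{th:bound_mix_derivative}'' once holomorphy and uniform boundedness on $\bB_{\brho}(\by)$ have been established via Lemma~\ref{lemma:sparsity_Lebesgue}. Your write-up is in fact more explicit than the paper's, and your remark that $\brho$ depends on $\bnu$ (unlike in the H\"older case) but that this is harmless for the Cauchy estimate is a helpful clarification.
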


\section{Sparse grid approximation of the parameter-to-solution map}\label{sec:SG}
We briefly recall the sparse grid interpolation construction. 
A complete discussion can be found e.g. in \cite{bungartz2004sparse} or \cite{nobile2008anisotropic}.

Consider \revision{the \emph{level-to-knot function} $m: \N_0 \rightarrow \N$, a strictly increasing function with  $m(0)=1$.
For any $\nu\in\N_0$, define the family of distinct nodes $\YY_{\nu} = \left(y_i^\nu\right)_{i=1}^{m(\nu)}\subset \R$ such that $y^0_1=0$. 
We shall write $y_i$ rather than $y_i^\nu$ when the context is clear.
Let $V_\nu$ denote a suitable $m(\nu)$-dimensional linear space and  
$I_\nu : C^0(\R) \rightarrow V_\nu$ an \emph{interpolation operator over $\YY_\nu$}, i.e. 
$I_\nu[u](y) = u(y)$ for all $y\in \YY_\nu$. 
For any $\nu\in \N_0$, the \emph{detail operator} is
$\Delta_\nu : C^0(\R) \rightarrow V_\nu$ with $\Delta_\nu u = I_{\nu}u - I_{\nu-1}u$,
where we assume $I_{-1} \equiv 0$ so that $\Delta_0 u = I_0 u \equiv u(0)$.}
Denote by $\FF$ the set of \emph{multi-indices} (i.e. integer-valued sequences) with finite support. 
For $\bnu\in\FF$, the corresponding \emph{hierarchical surplus} operator is 
$\Delta_{\bnu}: C^0(\R^{\N})\rightarrow V_{\bnu}$ where $V_{\bnu} = \bigotimes_{n\in\N} V_{\nu_n}$ and $\Delta_{\bnu} = \bigotimes_{n\in\N} \Delta_{\nu_n} = \bigotimes_{n\in\supp{\bnu}} \Delta_{\nu_n}$.
For any downward-closed $\Lambda \subset \FF$, define $V_{\Lambda}\coloneqq\bigoplus_{\bnu\in\Lambda} V_{\bnu}$ and the \emph{sparse grid interpolant}:
\begin{align}\label{eq:def_SG_intepolant}
	\II_{\Lambda}: C^0(\R^{\N}) \rightarrow V_{\Lambda} \qquad 
	\II_{\Lambda}\coloneqq \sum_{\bnu\in \Lambda} \Delta_{\bnu}.
\end{align}
The result~\cite[Theorem 2.1]{Chkifa2014High} shows that there exists a finite set $\HH_{\Lambda}\subset \R^{\N}$, the \emph{sparse grid}, such that 
$\II_{\Lambda} u (\by) = u (\by)$ for all $\by\in \HH_{\Lambda}$  and $\II_{\Lambda} u$ is the unique element of $V_{\Lambda}$ with this property.

With bounds 
$\norm{\Delta_{\bnu} u}{L^{2}_{\bmu}(\R^{\N})} \lesssim v_{\bnu}$ (\emph{value}) for all $\bnu\in\FF$ and 
$\#\HH_{\Lambda} \lesssim \sum_{\bnu\in\Lambda} w_{\bnu}$  (\emph{work}) for all downward closed $\Lambda\subset \FF$,
we may define the optimal $n$-elements multi-index set $\Lambda_n$ by choosing the $n$ multi-indices with maximum \emph{profit} $\mathcal{P}_{\bnu} \coloneqq \frac{v_{\bnu}}{w_{\bnu}}$.
\revision{This leads to the following theorem on sparse grid convergence, which, in the original reference, is proved only for finite dimensional domains. However, the proof applies verbatim to the infinite dimensional case since $\FF$ is countable.}
\begin{theorem}{\cite[Theorem 1]{nobile2016convergence}}\label{th:general_conv_th_SG}
If there exists $\tau \in (0,1]$ such that
\begin{align*}
	C_{\tau} \coloneqq \left(\sum_{\bnu\in\FF} \mathcal{P}_{\bnu}^{\tau} w_{\bnu}\right)^{1/\tau}  < \infty,
\end{align*}
then
\begin{align*}
	\norm{u- \II_{\Lambda_n} u}{L^2_{\bmu}(\R^{\N})} \leq C_{\tau} \#\HH_{\Lambda_n}^{1-1/\tau}.
\end{align*}
\end{theorem}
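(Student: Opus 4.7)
The plan is to combine a classical hierarchical telescoping of the interpolation error with a Hölder-type splitting of the profit, and then to use the ordering defining $\Lambda_n$ to turn the assumed $\tau$-summability into an algebraic decay in $\#\HH_{\Lambda_n}$.

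First I would expand $u$ in hierarchical surpluses as $u=\sum_{\bnu\in\FF}\Delta_\bnu u$, so that the definition \eqref{eq:def_SG_intepolant} of $\II_{\Lambda_n}$ gives the telescoping identity $u-\II_{\Lambda_n} u=\sum_{\bnu\notin\Lambda_n}\Delta_\bnu u$ (this uses that $\Lambda_n$ is downward-closed, which in turn follows from the assumed monotonicity of $v_\bnu$ and $w_\bnu$ in $\bnu$). Applying the triangle inequality in $L^2_\mu$ and the bound $\norm{\Delta_\bnu u}{L^2_\mu(\R^\N)}\leq v_\bnu=\mathcal{P}_\bnu w_\bnu$ yields
\begin{equation*}
\norm{u-\II_{\Lambda_n}u}{L^2_\mu(\R^\N)}\leq \sum_{\bnu\notin\Lambda_n}\mathcal{P}_\bnu w_\bnu.
\end{equation*}
Since $\tau\in(0,1]$, I split $\mathcal{P}_\bnu=\mathcal{P}_\bnu^{1-\tau}\mathcal{P}_\bnu^\tau$. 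Setting $\mathcal{P}_n^\ast:=\mathcal{P}_{\bnu_{n+1}}$ (the largest profit among multi-indices outside $\Lambda_n$, by the defining sort) and using $1-\tau\geq 0$, I bound
\begin{equation*}
\sum_{\bnu\notin\Lambda_n}\mathcal{P}_\bnu w_\bnu\leq (\mathcal{P}_n^\ast)^{1-\tau}\sum_{\bnu\notin\Lambda_n}\mathcal{P}_\bnu^\tau w_\bnu\leq (\mathcal{P}_n^\ast)^{1-\tau}C_\tau^\tau.
\end{equation*}

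The heart of the argument is then to show that $\mathcal{P}_n^\ast$ decays like $\#\HH_{\Lambda_n}^{-1/\tau}$. Because the $\bnu_i$ are sorted by decreasing profit, I get $\mathcal{P}_{\bnu_i}\geq\mathcal{P}_n^\ast$ for all $i\leq n$, whence $(\mathcal{P}_n^\ast)^\tau\sum_{i=1}^n w_{\bnu_i}\leq\sum_{i=1}^n\mathcal{P}_{\bnu_i}^\tau w_{\bnu_i}\leq C_\tau^\tau$. The work--cardinality assumption gives $\sum_{i=1}^n w_{\bnu_i}\gtrsim\#\HH_{\Lambda_n}$, so $\mathcal{P}_n^\ast\lesssim C_\tau\,\#\HH_{\Lambda_n}^{-1/\tau}$. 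Inserting this in the previous display yields
\begin{equation*}
\norm{u-\II_{\Lambda_n}u}{L^2_\mu(\R^\N)}\lesssim C_\tau^\tau\bigl(C_\tau\#\HH_{\Lambda_n}^{-1/\tau}\bigr)^{1-\tau}=C_\tau\,\#\HH_{\Lambda_n}^{1-1/\tau},
\end{equation*}
which is the claimed bound up to a constant depending only on the hidden constant in the work-to-grid inequality.

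The only delicate point I expect is the very first step: writing $u-\II_{\Lambda_n}u=\sum_{\bnu\notin\Lambda_n}\Delta_\bnu u$ requires that the hierarchical series $\sum_\bnu\Delta_\bnu u$ represents $u$ in $L^2_\mu(\R^\N)$. Under the hypothesis $C_\tau<\infty$, one does not immediately get $\sum v_\bnu<\infty$; a standard work-around is to prove the bound first for downward-closed $\Lambda\supset\Lambda_n$ with $\sum_{\bnu\in\Lambda}v_\bnu<\infty$ (for which the telescoping identity is unambiguous), apply the argument above, and then pass to the limit, using that the profit ordering makes $\Lambda_n$ cofinal among quasi-optimal sets. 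All remaining manipulations are routine finite-sum estimates.
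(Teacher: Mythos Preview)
The paper does not prove this theorem itself; it is quoted verbatim from \cite[Theorem 1]{nobile2016convergence} and used as a black box. Your argument is the standard Stechkin-type proof of that result and is correct: telescope the error into the tail $\sum_{\bnu\notin\Lambda_n}v_\bnu$, split $\mathcal{P}_\bnu=\mathcal{P}_\bnu^{1-\tau}\mathcal{P}_\bnu^{\tau}$, bound the $(1-\tau)$-part by the threshold profit $\mathcal{P}_n^\ast$, and control $\mathcal{P}_n^\ast$ by the work accumulated in $\Lambda_n$. Your observation that the result carries an implicit constant from the inequality $\#\HH_{\Lambda_n}\lesssim\sum_{\bnu\in\Lambda_n}w_\bnu$ is accurate; the paper's statement suppresses it too.
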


\subsection{1D piecewise polynomial interpolation on \texorpdfstring{$\mathbb{R}$}{R}}\label{sec:1D_interpol}
Let $\mu(x; \sigma^2) = \frac{1}{\sqrt{2\pi\sigma^2}} e^{-x^2/{2\sigma^2}}$ denote the normal density with mean zero and variance $\sigma^2>0$. Let $\mu(x)=\mu(x; 1)$ and $\tmu(x) =\mu(x; \sigma^2)$ for some fixed $\sigma^2>1$.
Consider the error function $\erf(x) = \frac{2}{\sqrt{\pi}}\int_0^x e^{-t^2} \rmd t$.

Define the \emph{level-to-knots function}
\begin{align}\label{eq:lev2knot_pw}
	m(\nu) \coloneqq 2^{\nu+1}-1\qquad \forall \nu\in \N_0.
\end{align}
Let $\YY_\nu = \left\{ y_1,\dots, y_{m(\nu)}\right\}\subset \R$, where
\begin{align}\label{eq:nodes_pwPoly}
	y_i &= \phi\left(-1+\frac{i}{m(\nu)+1}\right)\qquad \forall i=1,\dots,m(\nu),\\
	\label{eq:def_nodes_transform}
	\phi(x) &\coloneqq \alpha \erf^{-1}(x) \qquad \forall x\in(-1,1),\\ 
	\label{eq:const_nodes_transform}
	\alpha &= \alpha(p,\sigma^2) \coloneqq \sqrt{\frac{4p}{1-\frac{1}{\sigma^2}}}.
\end{align}
The $m(\nu)$ nodes define $m(\nu)+1$ intervals (the first and last are unbounded).
The nodes families are nested, i.e. $\YY_{m(\nu)}\subset \YY_{m(\nu+1)}$ for all $\nu\in\N_0$. 

As 1D interpolant, we consider the following continuous piecewise polynomial interpolant:
When $\nu=0$, consider the constant interpolation in $y=0$, i.e.
\begin{align*}
	I_0 [u] \equiv u(0).
\end{align*}
Now fix $p\in\N$, $p\geq 2$.
When $\nu \geq 1$, $I_\nu[\cdot]$ is the continuous piecewise polynomial interpolant of degree $p-1$ over the intervals defined by $\YY_\nu$. 
More precisely,
\begin{align*}
	&I_\nu[u](y_i) = u(y_i)\qquad &&\forall i=1,\dots,m(\nu),\\
	&I_\nu[u]|_{[y_i, y_{i+1}]} \textrm{ is a degree  $p-1$ polynomial} \qquad &&\forall i=1,\dots, m(\nu)-1,\\
	&I_\nu[u](y) \textrm{ polynomial extension of } I_\nu[u]|_{[y_1, y_2]}\qquad && \textrm{if } y \leq y_1,\\
	&I_\nu[u](y) \textrm{ polynomial extension of } I_\nu[u]|_{[y_{m(\nu)-1}, y_{m(\nu)}]}\qquad &&\textrm{if } y \geq y_{m(\nu)}.
\end{align*}
We assume that for each $i=1,\dots, m(\nu)-1$, the interval $(y_i, y_{i+1})$ contains additional $p-2$ distinct interpolation nodes so that $I_\nu[u]$ is uniquely defined.

The function $\phi$~\eqref{eq:def_nodes_transform} is such that $\left(\phi'(x)\right)^{2p} \tmu^{-1}(\phi(x)) \mu(\phi(x))$ is constant in $x$ and equals 
\begin{align}\label{eq:def_Cphi}
C_{\phi} = \sqrt{\sigma^2} \left(\frac{\alpha\sqrt{\pi}}{2}\right)^{2p},
\end{align}

\revision{Appendix~\ref{appendix:SG_convergence} proves some standard convergence estimates for the 1D interpolation.
The following proposition shows estimates for the hierarchical surpluses in the $L^2_{\bmu}$-norm (as in Section~\ref{sec:from_SPDE_to_paramPDE_general}, $\bmu$ denotes the standard Gaussian measure on $\R^{\N}$).}
\begin{proposition}\label{prop:estim_Deltas} 
Let $u:\R^{\N}\rightarrow \R$, $p\geq 2$ and $\bnu\in\FF$. Then 
\begin{align*}
\norm{\Delta_{\bnu}[u]}{L^2_{\bmu}(\R^\N)} \leq 
\left(\prod_{i:\nu_i=1} C_1 \right)
\prod_{i:\nu_i>1} \left(\frac{C_2 2^{-p\nu_i} }{p!}\right)
\norm{\partial_{\left\{i: \nu_i=1\right\}} \partial_{\left\{i:\nu_i>1\right\}}^p u}{L^2_{\tilde{\bmu}}(\R^{\N})},
\end{align*}
where 
\revision{$\tilde{\bmu}$ denotes the infinite product measure $\tilde{\bmu}\coloneqq \bigotimes_{n\in\N} \tmu_n$ and $\tmu_n=\tmu$ for all $N\in\N$, }
$u$ is understood to be sufficiently regular for the right-hand side to be well-defined, and 
$C_1, C_2>0$ are constants defined in the previous lemma.
\end{proposition}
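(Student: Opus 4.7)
The plan is to proceed by iterated application of the one-dimensional estimates from Lemma~\ref{lemma:estimate_details}, exploiting the tensor product structure of the hierarchical surplus operator $\Delta_\bnu = \bigotimes_{i\in\N}\Delta_{\nu_i}$. Since $\bnu\in\FF$ has finite support, only the factors with $i\in\supp(\bnu)$ are nontrivial (the factors with $\nu_i=0$ merely fix the $i$-th variable to $0$), so $\Delta_\bnu[u]$ is effectively a function of the finitely many variables $(y_i)_{i\in\supp(\bnu)}$ and the ambient product measure $\mu^{\otimes\N}$ may be projected onto $\R^{\supp(\bnu)}$.

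To handle the finite-dimensional tensor product, I would argue by induction on $\#\supp(\bnu)$. Consider an ordering $i_1,\dots,i_k$ of $\supp(\bnu)$ and, for fixed values of the remaining variables, apply the one-dimensional estimate of Lemma~\ref{lemma:estimate_details} in direction $i_1$ to bound
\begin{align*}
\int_{\R}|\Delta_{\nu_{i_1}}^{(i_1)}\cdots\Delta_{\nu_{i_k}}^{(i_k)}u|^{2}\,d\mu(y_{i_1})
\;\le\; c_{\nu_{i_1}}^{2}\int_{\R}|\partial_{i_1}^{d(\nu_{i_1})}\Delta_{\nu_{i_2}}^{(i_2)}\cdots\Delta_{\nu_{i_k}}^{(i_k)}u|^{2}\,d\tmu(y_{i_1}),
\end{align*}
where $c_\nu=C_1$ if $\nu=1$ and $c_\nu=C_2 2^{-p\nu}/p!$ if $\nu>1$, and $d(\nu)$ is either $1$ or $p$ accordingly. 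Here I use that $\partial_{i_1}^{d(\nu_{i_1})}$ commutes with the detail operators acting on the other variables. Then Fubini swaps the $y_{i_1}$-integration with the integration over the remaining variables (still weighted by $\mu$), and the same argument is repeated in direction $i_2$, and so on, each step replacing one $\mu$-weight with a $\tmu$-weight and one detail operator with the corresponding partial derivative. After $k$ iterations one obtains
\begin{align*}
\norm{\Delta_{\bnu}[u]}{L^2_{\mu}(\R^{\N})}^{2} \le \prod_{i\in\supp(\bnu)} c_{\nu_i}^{2}\;\norm{\partial_{\{i:\nu_i=1\}}\partial_{\{i:\nu_i>1\}}^{p}u}{L^2_{\tmu}(\R^{\N})}^{2},
\end{align*}
which is the claimed bound after taking the square root and collecting constants.

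The argument is essentially a bookkeeping exercise; the only delicate point is to ensure that the one-dimensional estimates may be applied pointwise in the remaining variables, which requires that the relevant mixed partial derivatives of $u$ exist and lie in $L^2_{\tmu}$ slicewise. I would therefore state at the outset the regularity hypothesis that $\partial_{\{i:\nu_i=1\}}\partial_{\{i:\nu_i>1\}}^{p}u\in L^2_{\tmu}(\R^{\N})$, under which Fubini applies and every intermediate slice-integral is finite $\tmu$-a.e. The main obstacle is not really the inequality itself but making this measurability and integrability setup precise in the infinite-dimensional sequence space; once one restricts to the finitely many coordinates in $\supp(\bnu)$, the estimate reduces to the classical finite-dimensional tensorization of one-dimensional interpolation error bounds.
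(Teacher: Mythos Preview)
Your proposal is correct and follows essentially the same approach as the paper: iterated application of the one-dimensional estimates from Lemma~\ref{lemma:estimate_details}, commuting partial derivatives with the detail operators in the remaining variables, and using Fubini to exchange the order of integration. The paper's proof is slightly terser (it simply assumes the nonzero components are the first $N$ and peels them off one at a time), but the argument is the same tensorization you describe.
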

\begin{proof}
Assume without loss of generality that all components of $\bnu$ are non-zero except the first $N\in\N$. 
Then, denoting by $\mu^N$ the $N$-dimensional standard Gaussian measure, by $\hat{\bnu}_1 = (\nu_2,\dots, \nu_N)$ and $\hat{\mu}_1$ the $(N-1)$-dimensional standard Gaussian measure, 
\begin{align*}
	\norm{\Delta_{\bnu}[u]}{L^2_{\bmu}(\R^\N)}^2
	=\int_{\R^{N}} \seminorm{\Delta_{\bnu}[u]}{}^2 \rmd \mu^N
	=\int_{\R^{N-1}} \int_{\R} \seminorm{\Delta_{\nu_1}[y_1\mapsto \Delta_{\hat{\bnu}_1} u ]}{}^2 \rmd \mu_1 \rmd \hat{\mu}_1.
\end{align*}
We apply Lemma~\ref{lemma:estimate_details} (assume that $\nu_1=1$, the other case is analogous) to get
\begin{align*}
\norm{\Delta_{\bnu}[u]}{L^2_{\bmu}(\R^\N)}^2
\leq \int_{\R^{N-1}} C_1^2\int_{\R} \seminorm{ \partial_1 \Delta_{\hat{\bnu}_1} [u] }{}^2 \rmd \tmu_1 \rmd \hat{\mu}_1.
\end{align*}
Exchanging the integrals as well as the operators acting on $u$ shows
\begin{align*}
\norm{\Delta_{\bnu}[u]}{L^2_{\bmu}(\R^\N)}^2
\leq C_1^2\int_{\R}\int_{\R^{N-1}}  \seminorm{  \Delta_{\hat{\bnu}_1} [\partial_1 u] }{}^2  \rmd \hat{\mu}_1\rmd \tmu_1.
\end{align*}
We can iterate this procedure $N-1$ additional times to obtain the statement.
\end{proof}

\subsection{Basic profits and dimension dependent convergence} \label{sec:midset_selection_theoretical}
In this section, we discuss the convergence of sparse grid approximation when the sample paths of Wiener processes and magnetizations are assumed to be Hölder-continuous. To this end, we apply the results found in Section \ref{sec:holomoprhic_regularity_Holder}.
\revision{
Let us begin by estimating the norm of hierarchical surpluses.
\begin{proposition}
Denote by $u:\XX_\R\rightarrow \U_\R$ the parameter-to-solution map of the (parametric) SLLG equation as in Section~\ref{sec:holomoprhic_regularity_Holder}.
Denote, for a finite support multi-index $\bnu\in\FF$, the corresponding hierarchical surplus operator $\Delta_{\bnu}$ as defined in the beginning of Section~\ref{sec:SG} using 1D piecewise polynomial interpolation of degree $p-1$ with $p\geq 2$. 
Denoting by $\bmu$ the standard Gaussian measure on $\R^{\N}$, there holds:
\begin{align*}
	\norm{\Delta_{\bnu}[u]}{L^2_{\bmu}(\R^\N)} 
	\lesssim \prod_{i\in\supp(\bnu)} \tilde{v}_{\nu_i}\quad\text{with}\quad \tilde{v}_{\nu_i}
	= \begin{cases}
		C_1 \rho_i^{-1} \qquad &\textrm{if } \nu_i=1\\
		C_2 \left(2^{\nu_i} \rho_i \right)^{-p} \qquad &\textrm{if } \nu_i>1,
	\end{cases}
\end{align*}
and $\rho_i = \eps 2^{\frac{(1-\alpha)\lceil\log_2(i)\rceil}{2}}$ for all $i\in\N$, as in~\eqref{eq:def_rho_n}. The hidden constant is independent of $\bnu$.
\end{proposition}
\begin{proof}
	We apply Proposition~\ref{prop:estim_Deltas} to estimate
	\begin{align*}
		\norm{\Delta_{\bnu}[u]}{L^2_{\bmu}(\R^\N)} 
		\leq 
		\left(\prod_{i:\nu_i=1} C_1 \right)
		\left(\prod_{i:\nu_i>1} \frac{C_2 2^{-p\nu_i} }{p!}\right)
		\norm{\partial_{\left\{i: \nu_i=1\right\}} \partial_{\left\{i:\nu_i>1\right\}}^p u}{L^2_{\tilde{\bmu}}(\R^{\N})}.
	\end{align*}
	Theorem~\ref{th:bound_mix_derivative} allows us to estimate the derivatives as:
	\begin{align*}
		\norm{\partial_{\left\{i: \nu_i=1\right\}} \partial_{\left\{i:\nu_i>1\right\}}^p u}{L^2_{\tilde{\bmu}}(\R^{\N})}
		\lesssim \left(\prod_{i:\nu_i=1} \rho_i^{-1} \right)
		\left(\prod_{i:\nu_i>1} p! \rho_i^{-p} \right).
	\end{align*}
	Combining the two estimates gives the statement.
\end{proof}
}
Recall the framework presented at the beginning of Section \ref{sec:SG}. 
Given a multi-index $\bnu\in\FF$, we define as its value and work respectively
\begin{align}\label{eq:def_value}
\tilde{v}_{\bnu} &= \prod_{i\in\supp(\bnu)} \tilde{v}_{\nu_i},\\
\label{eq:def_work}
w_{\bnu} &= \prod_{i\in\supp(\bnu)} p 2^{\nu_i}.
\end{align}
The definition of work is justified as follows: From the definition of 1D nodes \eqref{eq:nodes_pwPoly} and level-to-knots function \eqref{eq:lev2knot_pw}, each time a multi-index is added to the multi-index set, the sparse grid gains $(2^{\nu_i+1}-2)(p-1)+1$ new nodes in the $i$-th coordinate.

Recall that the profit is the ratio of value and work. In this case, it reads
\begin{align}\label{eq:basic_profit}
\tilde{\PP}_{\bnu} = \frac{\tilde{v}_{\bnu}}{w_{\bnu}}.
\end{align}

We apply the convergence Theorem \ref{th:general_conv_th_SG} to obtain a convergence rate that depends root-exponentially on the number of approximated parameters.
\begin{theorem}\label{th:conv_SG_C}
	Let $N\in\N$ and denote $\m_N:\R^N\rightarrow C^{1+\alpha/2, 2+\alpha}(D_T)$ the parameter-to-solution map from  the parametric LLG equation under the assumption that, for all $t\in[0,T]$ and all $\by\in\R^{\N}$,
	$W(\by, t)= \sum_{i=1}^N y_i \eta_i(t)$.
	Let $\Lambda_n\subset \N^{N}_0$ denote the optimal multi-index set with $\#\Lambda_n = n$ with respect to $\tilde{\mathcal{P}}_{\bm{\nu}}$ from~\eqref{eq:basic_profit}. Let $\II_{\Lambda_n}$ denote the corresponding piecewise polynomial sparse grid interpolant of degree $p-1$ with nodes \eqref{eq:nodes_pwPoly} and $p\geq 2$. Denote $\HH_{\Lambda_n}\subset \R^N$ the corresponding sparse grid.
	Under the assumptions of Theorem~\ref{th:Holder_sample_paths}, for any $\frac{2}{(1+\alpha)p} < \tau < 1$,
	\begin{align}\label{eq:conv_pwPoly_SG}
		& \norm{\m_N- \II_{\Lambda_n} \m_N}{L^2_{\bmu}(\R^N, C^{1+\alpha/2, 2+\alpha}(D_T))} \leq C_{\tau,p}(N) \left(\#\HH_{\Lambda_n}\right)^{1-1/\tau},
	\end{align}
	where $C_{\tau,p}(N)$ is a function of $\tau$, $p$, $N$ defined as  
	\begin{align*}
		C_{\tau,p}(N) = 
		(1+P_0)^{1/\tau}
		\exp\frac{1}{\tau}\left( 
		\frac{C_1^{\tau} \left(2p\right)^{1-\tau}}{2} \frac{1-N^{(1-(1-\alpha)\tau/2)}}{1-2^{1-(1-\alpha)\tau/2}}
		+ \frac{C_2^{\tau} \sigma(p,\tau)}{2} \frac{1}{1-2^{1-(1-\alpha)p\tau/2}}
		 \right),
	\end{align*}
	where
	$P_0 = C_1^{\tau} (2p)^{1-\tau} + C_2^{\tau} p^{1-\tau}\sigma(p,\tau)$, 
	$\sigma(p,\tau) = \frac{2^{2(1-\tau(p+1))}}{1-2^{1-\tau(p+1)}}$ and $C_1, C_2$ were defined in Lemma \ref{lemma:estimate_details}. In particular, the bound grows root-exponentially in the number of dimensions.
\end{theorem}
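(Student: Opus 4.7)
The plan is to verify the summability hypothesis of Theorem~\ref{th:general_conv_th_SG} for the profits \eqref{eq:basic_profit}, make the constant $C_\tau$ explicit in $N$, and read off the interpolation estimate \eqref{eq:conv_pwPoly_SG}. The key quantity to bound is
\begin{align*}
C_\tau^\tau = \sum_{\bnu \in \N_0^N} \tilde{\PP}_{\bnu}^{\tau}\, w_{\bnu} = \sum_{\bnu \in \N_0^N} \tilde{v}_{\bnu}^{\tau}\, w_{\bnu}^{1-\tau}.
\end{align*}
Because $\tilde{v}_{\bnu}$ and $w_{\bnu}$ both factor over $\supp(\bnu)$ with trivial contribution from any $\nu_i = 0$, Fubini--Tonelli tensorises this into
\begin{align*}
C_\tau^\tau = \prod_{i=1}^N (1 + P_i), \qquad P_i := \sum_{\nu=1}^{\infty} \tilde{v}_\nu^\tau\, w_\nu^{1-\tau}.
\end{align*}

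Next, I would split $P_i$ according to the two regimes in \eqref{eq:def_value}. The $\nu = 1$ contribution equals $C_1^\tau (2p)^{1-\tau}\, \rho_i^{-\tau}$, while the tail $\nu \geq 2$ is a geometric series in the ratio $2^{1-\tau(p+1)}$ summing to $C_2^\tau p^{1-\tau} \sigma(p,\tau)\, \rho_i^{-p\tau}$, provided $\tau > 1/(p+1)$ so that the ratio is strictly less than $1$. Using $1 + x \leq e^x$ gives $C_\tau^\tau \leq \exp \sum_{i=1}^N P_i$. To evaluate the outer sum I would group the indices according to the dyadic level structure of the holomorphy radii \eqref{eq:def_rho_n}: at level $\ell$ there are $\lceil 2^{\ell-1}\rceil$ indices all sharing $\rho_i = \eps\, 2^{(1-\alpha)\ell/2}$, so $\sum_{i=1}^N \rho_i^{-\tau}$ becomes a finite geometric series in $\ell$ with common ratio $2^{1-(1-\alpha)\tau/2}$ and $\log_2 N$ terms, yielding the closed form $(1 - N^{1-(1-\alpha)\tau/2})/(1 - 2^{1-(1-\alpha)\tau/2})$. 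The analogous sum weighted by $\rho_i^{-p\tau}$ is bounded uniformly in $N$ as soon as its common ratio $2^{1-(1-\alpha)p\tau/2}$ is less than $1$, which fixes the lower threshold on $\tau$. Assembling these contributions into $\sum_i P_i$, taking $\tau$-th roots, and invoking Theorem~\ref{th:general_conv_th_SG} yields the claim with $C_{\tau,p}(N)$ of the stated form.

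\textbf{Expected obstacle.} The nontrivial step is to pinpoint the source of $N$-dependence. The tail ($\nu \geq 2$) is summable uniformly in $N$ thanks to the strong $\rho_i^{-p\tau}$ decay produced by the piecewise polynomial interpolation of order $p$. The growth in $N$ comes exclusively from the first-derivative contribution $\nu = 1$, whose partial sum scales like $N^{1-(1-\alpha)\tau/2}$; because this sits inside the exponential bound from Step~2, $C_{\tau,p}(N)$ is root-exponential in $N$. This is the best one can expect under the weak $\ell^p$-summability (essentially $p = 2$) of the parametric derivatives inherited from the Hölder regularity of Section~\ref{sec:holomoprhic_regularity_Holder}, and it motivates the stronger sparsity result derived later in the Lebesgue setting. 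The remaining bookkeeping is to pick $\tau$ in the admissible range: bounded below so that both geometric series converge, and bounded above by $1$ so that the algebraic rate $1-1/\tau$ in \eqref{eq:conv_pwPoly_SG} is non-trivial.
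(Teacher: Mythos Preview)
Your proposal is correct and follows exactly the approach the paper indicates: the paper itself states that ``we skip the computations because they are a simplified version of the ones presented in the next section,'' and your factorisation $C_\tau^\tau=\prod_i(1+P_i)$, the split of each $P_i$ into the $\nu=1$ and $\nu\geq 2$ contributions, the use of $1+x\leq e^x$, and the level-wise summation of $\rho_i^{-\tau}$ and $\rho_i^{-p\tau}$ are precisely that simplified argument. The only cosmetic discrepancy is that the paper's stated constant keeps the level-zero factor as an explicit $(1+P_0)^{1/\tau}$ prefactor rather than absorbing it into the exponential, which does not affect the argument.
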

\revision{ 
\begin{proof}
With the aim of applying the convergence Theorem \ref{th:general_conv_th_SG}, we estimate:
\begin{align*}
	\sum_{\bnu\in\N_0^N}\PP_{\bnu}^{\tau} w_{\bnu} & =
	\sum_{\bnu\in\N_0^N}v_{\bnu}^{\tau} w_{\bnu}^{1-\tau} \leq
	\prod_{i=1}^N \sum_{\nu_i\geq 0}  v_{\nu_i}^{\tau} w_{\nu_i}^{1-\tau} \\
	& = \prod_{i=1}^N \left( 1 + \left(C_1 \rho_i^{-1}\right)^{\tau} \left(2p\right)^{1-\tau}+
	\sum_{\nu_i\geq 2} \left(C_2 (2^{\nu_i}\rho_i)^{-p}\right)^{\tau} \left(p2^{\nu_i}\right)^{1-\tau}
	\right).
\end{align*}
The remainder of the proof consists of estimating the product under the condition on $\tau$. See Appendix~\ref{appendix:SG_convergence} for details. 
\end{proof}
}

\subsection{Improved profits and dimension independent convergence}\label{sec:improved_profit}
In the previous section, we could prove only a dimension-\emph{dependent} convergence. This may be attributed to the slow growth of the holomorphy radii $\rho_i \lesssim 2^{\frac{(1-\alpha)\ell(i)}{2}}$. 
Let us consider the setting from Section \ref{sec:holomorphic_regularity_L1_small}, in which we assumed small Wiener processes and a coefficient $\bg$ with small gradient. With these modelling assumptions, we proved that the holomorphy radii can be chosen as \eqref{def:rho_L1}. This will be sufficient to obtain dimension-\emph{independent} convergence.
Again we work within the framework described at the beginning of Section \ref{sec:SG}.

We need to define \emph{values} that, for any $\bnu\in\FF$, bound $\norm{\Delta_{\bnu}u}{L^2_{\bmu}(\R^{\N}, \U)}$ from above. 
The estimates from Proposition \ref{prop:estim_Deltas} and the estimate on the derivatives from Proposition \ref{th:bound_mix_derivative_Lebesgue} motivate the following choice of values:
\begin{align*}
v_{\bnu} = \prod_{i\in\supp(\bnu)} v_{\nu_i},
\qquad \textrm{where } v_{\nu_i} =
\begin{cases}
C_1 \rho_i^{-1} \qquad &\textrm{if } \nu_i=1\\
C_2 \left(2^{\nu_i} \rho_i \right)^{-p} \qquad &\textrm{if } \nu_i>1
\end{cases}
\end{align*}
and 
\begin{align*}
\rho_i = \rho_{\ell, j} \coloneqq \gamma
\begin{cases}
2^{\left(\frac{3}{2}-\delta\right)\ell} \frac{1}{r_{\ell}(\bnu)}\qquad & \textrm{if } \nu_{\ell,j}=1\\
2^{\left(\frac{1}{2}-\delta\right)\ell}\qquad & \textrm{otherwise}
\end{cases}.
\end{align*}
Here, $i$ and $(\ell,j)$ are related through the hierarchical indexing \eqref{eq:re-indexing_LC}, $\delta>0$ is small and for any $\ell\in\N_0$, $\bnu\in\FF$,
$r_{\ell}(\bnu) = \#\left\{j\in \left\{ 1,\dots, \lceil 2^{\ell-1}\rceil \right\}: \nu_{\ell,j}=1\right\}$.
With the \emph{work} defined as in \eqref{eq:def_work}, the profits now read
\begin{align}\label{eq:improved_profit}
\PP_{\bnu} = \frac{v_{\bnu}}{w_{\bnu}}.
\end{align}
Let us determine for which $\tau\in (0,1)$ the sum $\sum_{\bnu\in\FF} v_{\bnu}^{\tau} w_{\bnu}^{1-\tau}$ is finite. This setting is more complex than the one in the previous section because the factors $v_{\nu_i}$ that define the values $v_{\bnu}$ depend in general on $\bnu$ rather than $\nu_i$ alone. 
Define
\begin{align*}
\FF^*\coloneqq  \left\{\bnu\in\FF: \nu_i \neq 1\  \forall i\in\N\right\}
\end{align*}
and for any $\bnu\in\FF^*$ 
\begin{align*}
K_{\bnu}\coloneqq \set{\hat{\bnu}\in\FF}{
\hat{\nu}_i = \nu_i \textrm{ if } \nu_i>1 \text{ and }\hat{\nu}_i \in \left\{0,1\right\} \textrm{ if } \nu_i=0 
}.
\end{align*}
The family $\left\{K_{\bnu}\right\}_{\bnu\in\FF^*}$ is a partition of $\FF$.
As a consequence,
\begin{align}\label{eq:estim_conv_SG_L1_split}
\begin{split}
\sum_{\bnu\in\FF} v_{\bnu}^{\tau} w_{\bnu}^{1-\tau}
= \sum_{\bnu\in\FF^*}\sum_{\hbnu\in K_{\bnu}} v_{\hbnu}^{\tau}w_{\hbnu}^{1-\tau}
= \sum_{\bnu\in\FF^*} \prod_{i:\nu_i>1} \left(v_{\nu_i}^{\tau}w_{\nu_i}^{1-\tau}\right)
\sum_{\hbnu\in K_{\bnu}} \prod_{i:\hat{\nu}_i\leq1 } \left(v_{\hat{\nu}_i}^{\tau}w_{\hat{\nu}_i}^{1-\tau}\right).
\end{split}
\end{align}
Consider the following subset of $\FF$:
\begin{align*}
\FFd \coloneqq K_{\b{0}} = \set{\bnu\in\FF}{\nu_i\in\left\{0,1\right\}\forall i\in\N}.
\end{align*}
The following two technical lemmata are proved in Appendix~\ref{appendix:SG_convergence}.
\begin{lemma}\label{lemma:summability_in_F_01}
Let $0<p< 1 $, $ p<q<\infty$, and the sequence $\bm{a} = (a_j)_{j\in\N} \in \ell^p(\N)$. Then,
\begin{align*}
\left( \seminorm{\bnu}{1}!\ \bm{a}^{\bnu} \right)_{\bnu\in\FFd} \in \ell^q(\FFd).
\end{align*}
\end{lemma}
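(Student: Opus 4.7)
My plan is to reduce the claim to the critical exponent $q=p$ via nesting of sequence $\ell^r$-spaces, and then to exploit the assumption $p<1$, which makes the factor $(n!)^{p-1}$ decay super-exponentially.

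First, I recall that for any countable index set $I$ and $0<p\leq q<\infty$ one has $\ell^p(I)\hookrightarrow\ell^q(I)$ with $\|x\|_{\ell^q(I)}\leq \|x\|_{\ell^p(I)}$: indeed $\sum_\alpha |x_\alpha|^p<\infty$ forces $|x_\alpha|\to 0$, so $|x_\alpha|^q\leq |x_\alpha|^p$ for all but finitely many $\alpha$. Applying this with $I=\FFd$ reduces the lemma to the special case $q=p$, i.e.~to showing that $(|\bnu|_1!\,\bm{a}^{\bnu})_{\bnu\in\FFd}\in\ell^p(\FFd)$.

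Next, I would exploit the fact that each $\bnu\in\FFd$ is the indicator of its support $S\coloneqq\supp(\bnu)\subset\N$, so $|\bnu|_1=|S|$ and $\bm{a}^{\bnu}=\prod_{i\in S}a_i$. Grouping by $n=|S|$ gives
\begin{align*}
\sum_{\bnu\in\FFd}(|\bnu|_1!)^p\,(\bm{a}^{\bnu})^p
=\sum_{n=0}^{\infty}(n!)^p\, e_n\bigl((a_i^p)_{i\in\N}\bigr),
\end{align*}
where $e_n$ denotes the $n$-th elementary symmetric polynomial. The standard bound $n!\,e_n(\bm{b})\leq \bigl(\sum_i b_i\bigr)^n$ (which follows by expanding the right-hand side and noticing that the tuples of pairwise distinct indices already contribute $n!\,e_n(\bm{b})$), applied to $\bm{b}=(a_i^p)_i$, dominates the right-hand side by $\sum_{n\geq 0}(n!)^{p-1}B^n$ with $B\coloneqq\|\bm{a}\|_{\ell^p}^p<\infty$.

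Because $p<1$, Stirling's formula gives $(n!)^{p-1}\lesssim n^{(p-1)n}\,e^{(1-p)n}$, which decays faster than any exponential in $n$. Hence $\sum_n(n!)^{p-1}B^n<\infty$, and combined with the first-step nesting this delivers the claim. The only real subtlety is the initial reduction to $q=p$: a direct attack for $q>1$ using the same symmetric-function bound would yield $\sum_n(n!)^{q-1}\|\bm{a}\|_{\ell^q}^{qn}$, which diverges, so one genuinely needs to pass through the $\ell^p$-regime first. Beyond that, I anticipate no substantive obstacle.
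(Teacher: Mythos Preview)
Your proof is correct and takes a genuinely different, more elementary route than the paper's.

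The paper does \emph{not} reduce to $q=p$. Instead, it works directly with the exponent $q$: it introduces an auxiliary $\eps>0$ with $\frac{1}{1+\eps}\geq p$ and $q>p(1+\eps)$, picks a scale $\alpha>\|\bm a\|_{\ell^{1/(1+\eps)}}$, and uses the Stirling-type bound $\alpha^{|\bnu|_1}\lesssim(|\bnu|_1!)^\eps$ to rewrite $(|\bnu|_1!\,\bm a^{\bnu})^q$ as $\bigl(\frac{|\bnu|_1!}{\bnu!}\,\bm b^{\bnu}\bigr)^{(1+\eps)q}$ with $b_j=(a_j/\alpha)^{1/(1+\eps)}$ and $\bnu!=1$ on $\FFd$. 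Finiteness is then imported from \cite[Theorem~1]{Cohen2011Analytic}, which needs both $\sum_j b_j<1$ (guaranteed by the choice of $\alpha$) and $\bm b\in\ell^r$ for $r=(1+\eps)q$. Your argument, by contrast, first passes to $\ell^p$ via the nesting $\ell^p\hookrightarrow\ell^q$ and then exploits the $\{0,1\}$-structure of $\FFd$ directly through the elementary symmetric polynomial bound $n!\,e_n(\bm b)\leq(\sum_i b_i)^n$, yielding $\sum_n(n!)^{p-1}\|\bm a\|_{\ell^p}^{pn}<\infty$ since $p-1<0$. This is fully self-contained, avoids the external reference and the auxiliary parameters $\eps,\alpha$, and in fact delivers the borderline case $q=p$ as well. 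The paper's detour through the Cohen--DeVore--Schwab machinery has the merit of tying the estimate to a framework that handles all of $\FF$, but for the statement at hand your direct computation is both shorter and sharper.
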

\begin{lemma}\label{lemma:finite_factor_product_1}
If $\tau > \frac{1}{\frac{3}{2}-\delta}$, there exists $C>0$ such that for any $\bnu\in\FF^*$,
\begin{align*}
\sum_{\hbnu\in K_{\bnu}} \prod_{i:\hat{\nu}_i\leq1 } \left(v_{\hat{\nu}_i}^{\tau}w_{\hat{\nu}_i}^{1-\tau}\right) \leq C.
\end{align*}
\end{lemma}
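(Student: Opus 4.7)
\textbf{Parameterize the sum.}
For each level $\ell\in\N_0$, let $J_\ell := \{j\in\{1,\dots,\lceil 2^{\ell-1}\rceil\}:\nu_{\ell,j}=0\}$.
Because $\bnu\in\FF^*$, the condition $\hat\nu_i\leq 1$ together with $\hbnu\in K_\bnu$ restricts $\hbnu$ to the positions with index in $\bigcup_\ell J_\ell$, at each of which $\hat\nu_{\ell,j}\in\{0,1\}$ is free. Moreover $r_\ell(\hbnu)$ equals $k_\ell := \#\{j\in J_\ell:\hat\nu_{\ell,j}=1\}$.
Substituting the values of $v_0,w_0,v_1,w_1$ and the formula \eqref{def:rho_L1} for $\rho_{\ell,j}$ (with $\hat\nu_{\ell,j}=1$), an index $i=(\ell,j)$ with $\hat\nu_i=1$ contributes the factor
\[
v_1^\tau w_1^{1-\tau} \;=\; A\,2^{-(3/2-\delta)\ell\tau}\,k_\ell^{\tau},
\qquad A := C_1^\tau\gamma^{-\tau}(2p)^{1-\tau},
\]
while $\hat\nu_i=0$ contributes $1$. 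Summing over $\hbnu\in K_\bnu$ amounts to choosing, independently on each level $\ell$, the integer $k_\ell\in\{0,\dots,|J_\ell|\}$ and the $k_\ell$-subset of $J_\ell$ on which $\hat\nu=1$, so
\begin{align*}
\sum_{\hbnu\in K_\bnu}\prod_{i:\hat\nu_i\leq 1} v_{\hat\nu_i}^\tau w_{\hat\nu_i}^{1-\tau}
\;=\;\prod_{\ell\in\N_0} S_\ell(\bnu), \qquad
S_\ell(\bnu) := \sum_{k=0}^{|J_\ell|}\binom{|J_\ell|}{k} A^k\, 2^{-(3/2-\delta)\ell\tau k}\,k^{\tau k}.
\end{align*}

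\textbf{Uniform bound on each level.}
Using $|J_\ell|\leq \lceil 2^{\ell-1}\rceil \leq 2^\ell$ and $\binom{|J_\ell|}{k}\leq |J_\ell|^k/k!\leq 2^{\ell k}/k!$, together with the Stirling estimate $k^k\leq k!\,e^k$ (hence $k^{\tau k}\leq (k!)^\tau e^{\tau k}$, interpreting the $k=0$ term as $1$), I obtain
\[
S_\ell(\bnu) \;\leq\; \sum_{k=0}^{\infty} \frac{\lambda_\ell^{\,k}}{(k!)^{1-\tau}},
\qquad
\lambda_\ell := A\,e^{\tau}\,2^{\ell(1-(3/2-\delta)\tau)}.
\]
The hypothesis $\tau>1/(3/2-\delta)$ means exactly that the exponent $1-(3/2-\delta)\tau$ is strictly negative; thus $\lambda_\ell=A e^{\tau} 2^{-\beta\ell}$ with $\beta>0$, which in particular decays geometrically to $0$. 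This upper bound is independent of $\bnu$.

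\textbf{Convergence of the product.}
For $\tau\in(1/(3/2-\delta),1]$ and any fixed $\lambda\geq 0$, the series $F(\lambda):=\sum_{k\geq 0}\lambda^k/(k!)^{1-\tau}$ is entire in $\lambda$ with $F(0)=1$ and $F(\lambda)=1+O(\lambda)$ as $\lambda\to 0^+$. Consequently, for all sufficiently large $\ell$ one has $\log S_\ell(\bnu)\leq \log F(\lambda_\ell)\leq 2\lambda_\ell$, so that
\[
\sum_{\ell\in\N_0}\log S_\ell(\bnu)
\;\leq\;\sum_{\ell<\ell_0}\log F(\lambda_\ell) + 2\sum_{\ell\geq\ell_0}\lambda_\ell
\;<\;\infty,
\]
again uniformly in $\bnu$. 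Exponentiating gives the desired constant $C$.

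\textbf{Main obstacle.} The only real difficulty is the explosive factor $k^{\tau k}$ which appears because the holomorphy radius $\rho_{\ell,j}$ in \eqref{def:rho_L1} depends on $r_\ell(\hbnu)=k_\ell$. Controlling it via Stirling turns the level-$\ell$ sum into an entire-function series in $\lambda_\ell$, and then everything hinges on the geometric decay of $\lambda_\ell$, which is precisely what the assumed lower bound on $\tau$ provides. Dropping below this threshold would make $\lambda_\ell$ grow and destroy the estimate.
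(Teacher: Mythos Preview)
Your argument is correct and takes a genuinely different route from the paper. The paper does not factorize level by level; instead it bounds $\prod_{i:d_i=1}r_{\ell(i)}(\bm d)^\tau$ by $(|\bm d|_1!)^\tau e^{\tau|\bm d|_1}$ via Stirling together with the crude collapse $\prod_\ell r_\ell(\bm d)!\le |\bm d|_1!$, arriving at $\sum_{\bm d\in D_{\bnu}}(|\bm d|_1!\,\bm c^{\bm d})^\tau$ with $c_j\simeq 2^{-(3/2-\delta)\ell(j)}$. It then appeals to the preceding auxiliary lemma on $\ell^q$-summability of $(|\bnu|_1!\,\bm a^{\bnu})_{\bnu\in\FFd}$, which in turn invokes \cite{Cohen2011Analytic}. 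Your approach keeps the product structure $\prod_\ell S_\ell$, bounds each factor by a single-variable series $F(\lambda_\ell)=\sum_k\lambda_\ell^k/(k!)^{1-\tau}$, and sums the logarithms using the geometric decay of $\lambda_\ell$. This is more elementary and self-contained: it avoids both the auxiliary lemma and the external reference, and it is arguably less wasteful than passing from $\prod_\ell r_\ell!$ to $|\bm d|_1!$.

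One small wrinkle: your claim that $F$ is entire fails at the endpoint $\tau=1$, where $F(\lambda)=\sum_k\lambda^k$ has radius of convergence $1$ and $\lambda_0=Ae$ may exceed $1$. This is harmless, since for the finitely many levels with $\lambda_\ell\ge 1$ you can simply keep the finite sum $S_\ell\le\sum_{k\le|J_\ell|}\lambda_\ell^k<\infty$ rather than pass to $F(\lambda_\ell)$; the tail estimate is unaffected. For $\tau<1$ (the only case of interest in the subsequent convergence theorem) your argument works verbatim.
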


Going back to \eqref{eq:estim_conv_SG_L1_split}, we are left with determining for which parameters $p\geq 3, \tau > \frac{1}{\frac{3}{2}-\delta}$ the series $\sum_{\bnu\in\FF^*} \prod_{i:\nu_i>1} \left(v_{\nu_i}^{\tau}w_{\nu_i}^{1-\tau}\right)$ is summable.
By means of the product structure of the summands, we can write
\begin{align*}
\sum_{\bnu\in\FF^*} \prod_{i:\nu_i>1} \left(v_{\nu_i}^{\tau}w_{\nu_i}^{1-\tau}\right)
&= \prod_{i\in\N} \sum_{\nu_i\in\N\setminus \left\{1\right\}} v_{\nu_i}^{\tau} w_{\nu_i}^{1-\tau} 
= \prod_{i\in\N} \big(1+ \sum_{\nu_i\geq 2}\left( C_2 2^{-p\left(\left(\frac{1}{2}-\delta\right)\ell(i)+\nu_i\right)}\right)^\tau\left(p2^{\nu_i}\right)^{1-\tau}\big).
\end{align*}
Observe that the sum is finite if $\tau\geq \frac{1}{p+1}$ and in this case
\begin{align*}
\sum_{\nu_i\geq 2} \left(C_2 2^{-p\left(\left(\frac{1}{2}-\delta\right)\ell(i)+\nu_i\right)}\right)^{\tau}\left(p2^{\nu_i}\right)^{1-\tau}
= C_2^{\tau} 2^{-p\left(\frac{1}{2}-\delta\right)\ell(i)\tau} p^{1-\tau} \sigma,
\end{align*}
where $\sigma = \sigma(p,\tau) = 
\frac{2^{2(-(p+1)\tau+1)}}{1 - 2^{-(p+1)\tau+1}}$. 
To summarize, denoting 
$F_{\ell} \coloneqq C_2^{\tau} 2^{-p\left(\frac{1}{2}-\delta\right)\ell\tau} p^{1-\tau} \sigma$, 
so far we have estimated
$\sum_{\bnu\in\FF^*} \prod_{i:\nu_i>1} \left(v_{\nu_i}^{\tau}w_{\nu_i}^{1-\tau}\right)
\leq \prod_{i\in\N} \left(1+ F_{\ell(i)}\right)$.
We can further estimate, recalling the hierarchical indexing \eqref{eq:re-indexing_LC},
\begin{align*}
\prod_{i\in\N} \left(1+ F_{\ell(i)}\right)
\leq \exp\left(\sum_{i\in\N} \log\left(1+F_{\ell(i)}\right)\right)
\leq \exp\left(\sum_{\ell\in\N_0} 2^{\ell} \log\left(1+F_{\ell}\right)\right)
\leq \exp\left(\sum_{\ell\in\N_0} 2^{\ell} F_{\ell}\right).
\end{align*}
The last sum can be written as
$
\sum_{\ell\in\N_0} 2^{\ell} F_{\ell}
= C_2^{\tau} p^{1-\tau} \sigma \sum_{\ell\in\N_0} 2^{\left(1-\left(\frac{1}{2}-\delta\right)p\tau\right)\ell}$, which is finite for $\tau > \frac{1}{p\left(\frac{1}{2}-\delta\right)}$ and in this case equals $C_2^{\tau} p^{1-\tau} \sigma \left(1-2^{1-\left(\frac{1}{2}-\delta\right)p \tau}\right)^{-1}$.

\begin{remark}\label{rem:dim}
When $p=2$ the condition $\tau > \frac{1}{p\left(\frac{1}{2}-\delta\right)}$ just above gives $\tau > 1$ for any $\delta>0$. This means that we are not able to show that piecewise \emph{linear} sparse grid interpolant converges independently of the number of dimensions (although we see it in the numerical experiments below).
Conversely, if $p\geq3$ there exists $\frac{2}{3} < \tau < 1$ that satisfies all the conditions (remember that while $\delta$ cannot be 0, it can be chosen arbitrarily small).
\end{remark}

Finally, Theorem \ref{th:general_conv_th_SG} implies the following convergence.
\begin{theorem}\label{th:conv_SG_L1}
Consider the parameter-to-solution map of the random LLG equation $\m = \bM^0+u$ as in Section \ref{sec:holomorphic_regularity_L1_small}. Recall that $\bM^0\in C^{2+\alpha}(D)$ is the initial condition and $u:\XX_{\R}\rightarrow \U_{\R}$ with $\XX_{\R}$ and $\U_{\R}$ defined in \eqref{eq:def_real_param_lebesgue} and \eqref{eq:def_real_sol_Lebesgue} respectively.
Let $\Lambda_n\subset \FF$ denote the optimal multi-index set with $\#\Lambda_n = n$ defined using the profits $\mathcal{P}_{\nu}$ \eqref{eq:improved_profit}. Let $\II_{\Lambda_n}$ denote the corresponding piecewise polynomial sparse grid interpolant of degree $p-1$ for $p\geq 3$ with nodes \eqref{eq:nodes_pwPoly}. Assume that the corresponding sparse grid satisfies $\HH_{\Lambda_n}\subset \XX_{\R}$.
Under the assumptions of Theorem~\ref{th:Holder_sample_paths}, for any $\frac{2}{3} < \tau < 1$,
\begin{align*}
& \norm{\m- \II_{\Lambda_n} \m}{L^2_{\bmu}(\XX_{\R}; \U)} \leq C_{\tau,p} \left(\#\HH_{\Lambda_n}\right)^{1-1/\tau},
\end{align*}
where $C_{\tau, p}$ is dimension independent and defined as 
\begin{align*}
C_{\tau, p} =
C^{\frac{1}{\tau}}
\exp\left( \frac{1}{\tau}
C_2^{\tau}
p^{1-\tau}
\frac{2^{2(-(p+1)\tau+1)}}{1 - 2^{-(p+1)\tau+1}} 
\frac{1}{1-2^{1-\left(\frac{1}{2}-\delta\right) p\tau}}
\right),
\end{align*}
where in turn $C$ is defined in Lemma \ref{lemma:finite_factor_product_1} and $C_2$ is defined in Lemma \ref{lemma:estimate_details}.
\end{theorem}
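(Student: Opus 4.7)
The plan is to invoke Theorem~\ref{th:general_conv_th_SG} with the profits defined in \eqref{eq:improved_profit}. Since $\mathcal{P}_{\bnu}^{\tau} w_{\bnu} = v_{\bnu}^\tau w_{\bnu}^{1-\tau}$, what I need to show is that
\begin{align*}
S(\tau)\coloneqq \sum_{\bnu\in\FF} v_{\bnu}^{\tau}w_{\bnu}^{1-\tau} <\infty
\end{align*}
for some $\tau\in(2/3,1)$, together with an explicit bound on $S(\tau)^{1/\tau}$ that is \emph{independent of the dimension}. The dimension independence must come entirely from the exponential bound, because the product structure over $i\in\N$ does not by itself converge.

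First I would reuse the decomposition already introduced before the theorem: the family $\{K_{\bnu}\}_{\bnu\in\FF^*}$ partitions $\FF$, and along each block the factor $\prod_{i:\nu_i>1}(v_{\nu_i}^\tau w_{\nu_i}^{1-\tau})$ is constant, while only the factors indexed by $i$ with $\hat\nu_i\in\{0,1\}$ vary. This gives the identity \eqref{eq:estim_conv_SG_L1_split}, so
\begin{align*}
S(\tau)=\sum_{\bnu\in\FF^*}\Bigl[\prod_{i:\nu_i>1}v_{\nu_i}^{\tau}w_{\nu_i}^{1-\tau}\Bigr]\underbrace{\sum_{\hbnu\in K_{\bnu}}\prod_{i:\hat\nu_i\leq 1}v_{\hat\nu_i}^{\tau}w_{\hat\nu_i}^{1-\tau}}_{=:T_{\bnu}(\tau)}.
\end{align*}

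Next, Lemma~\ref{lemma:finite_factor_product_1} directly provides a constant $C>0$, independent of $\bnu$, such that $T_{\bnu}(\tau)\leq C$ whenever $\tau>(\tfrac{3}{2}-\delta)^{-1}$. This is the step that requires the coupling through $r_{\ell}(\bnu)$ to be controlled; the essential input is that $(\seminorm{\bm{d}}{1}!\,\bm{c}^{\bm{d}})_{\bm{d}\in\FFd}$ is $\ell^\tau$-summable when $\bm{c}\in\ell^\tau$, which is exactly the preceding lemma. I anticipate this is the main technical obstacle, but since it is already proved the argument reduces to plugging in. After this reduction, we are left with estimating $\sum_{\bnu\in\FF^*}\prod_{i:\nu_i>1}v_{\nu_i}^{\tau}w_{\nu_i}^{1-\tau}$, which factorizes as $\prod_{i\in\N}(1+\Sigma_{\ell(i)}(\tau))$ with
\begin{align*}
\Sigma_{\ell}(\tau)=\sum_{k\geq 2}\bigl(C_2\,2^{-p((\tfrac{1}{2}-\delta)\ell+k)}\bigr)^{\tau}\bigl(p\,2^{k}\bigr)^{1-\tau}
= C_2^{\tau}\,p^{1-\tau}\,\sigma(p,\tau)\,2^{-p(\tfrac{1}{2}-\delta)\ell\tau},
\end{align*}
provided $\tau\geq (p+1)^{-1}$, where $\sigma(p,\tau)=2^{2(1-(p+1)\tau)}/(1-2^{1-(p+1)\tau})$.

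To obtain a \emph{dimension-independent} bound I would then use $\log(1+x)\leq x$ and collapse the product into the exponential
\begin{align*}
\prod_{i\in\N}(1+\Sigma_{\ell(i)}(\tau))\leq \exp\Bigl(\sum_{\ell\in\N_0}2^{\ell}\Sigma_{\ell}(\tau)\Bigr),
\end{align*}
where $2^\ell$ counts the number of basis functions on level $\ell$. The geometric series $\sum_{\ell}2^{\ell}2^{-p(\tfrac{1}{2}-\delta)\ell\tau}$ converges exactly when $\tau>(p(\tfrac{1}{2}-\delta))^{-1}$, giving the closed form $(1-2^{1-p(\tfrac{1}{2}-\delta)\tau})^{-1}$. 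Collecting the three thresholds on $\tau$, namely $\tau>(\tfrac{3}{2}-\delta)^{-1}$, $\tau\geq (p+1)^{-1}$ and $\tau>(p(\tfrac{1}{2}-\delta))^{-1}$, one checks (as noted in Remark~\ref{rem:dim}) that for $p\geq 3$ and $\delta>0$ small enough there is a non-empty admissible range contained in $(2/3,1)$. Plugging the resulting bound on $S(\tau)$ into Theorem~\ref{th:general_conv_th_SG} and taking $1/\tau$ powers yields the stated constant $C_{\tau,p}$ and the rate $(\#\HH_{\Lambda_n})^{1-1/\tau}$, all independent of the number of parameters since neither $C$ from Lemma~\ref{lemma:finite_factor_product_1} nor the exponential factor depends on $N$.
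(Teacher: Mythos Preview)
Your proposal is correct and follows essentially the same argument as the paper: the decomposition via $\{K_{\bnu}\}_{\bnu\in\FF^*}$, the use of Lemma~\ref{lemma:finite_factor_product_1} to bound the inner sum uniformly, the factorization of the remaining $\FF^*$-sum into $\prod_i(1+F_{\ell(i)})$, the $\log(1+x)\le x$ trick, and the resulting geometric-series bound are exactly the steps the paper carries out in Section~\ref{sec:improved_profit} just before stating the theorem. The only cosmetic difference is notation (your $\Sigma_\ell$ is the paper's $F_\ell$), and you should note the result holds for \emph{every} $\tau\in(2/3,1)$ rather than just \emph{some}, but the thresholds you collect already establish this.
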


\begin{remark}[Optimality of the convergence rate $\frac{1}{2}$]\label{rk:optimality_improved_profit_SG}
\revision{We expect the optimal approximation rate of any collocation-type method to be $\frac{1}{2}$ with respect to the number of collocation nodes.
This is because the LCE is a piecewise affine approximation of the Wiener process sample paths, which are only Hölder-regular with any Hölder exponent less than $\frac{1}{2}$ and hence does not admit a better rate.
Since the parametric LLG equation is not expected to have any smoothing effect in general and since it is not possible to have less than one collocation node per dimension, the sparse grid algorithm also achieves an approximation rate of at most $\frac{1}{2}$.
As a consequence,} piecewise quadratic approximation ($p=3$) has optimal convergence rate and using $p>3$ does not improve the convergence rate (but may improve the constant $C_{\tau, p}$). For the same reason, sparse grid interpolation based on other 1D interpolations schemes (e.g. global polynomials) cannot give a better convergence rate (but may improve the constant).
\end{remark}

\begin{remark}[Approximation of the random field solution of the SLLG equation]
	Given an approximation $\m_{\Lambda}(\by)$ of the solution to the parametric LLG equation \eqref{eq:parametric_LLG_stong} (for example, the sparse grid interpolant studied above), we sample an approximation of the random field solution of the \emph{random} LLG equation \eqref{eq:rnd_LLG_problem} as follows:
	Sample i.i.d. standard normal random variables $\bm{Y} = (Y_i)_{i=1}^{N_{\Lambda}}$ and evaluate $\m_{\Lambda}(Y_1,\dots, Y_{N_{\Lambda}})$.
	Here, $N_{\Lambda}$ is the number of active parameters in the sparse grid interpolant $\II_{\Lambda}$, i.e. $N_{\Lambda} \coloneqq \min{\left\{ n\in\N: \forall \bm{\nu}\in\Lambda\ \supp(\bnu)\subset \left\{1,\dots,n\right\} \right\}}$.
	The root-mean-square error is naturally the same as the one we estimated in the previous theorem:
	$
		\sqrt{\E_{\bm{Y}} \norm{\m(\bm{Y}) - \m_{\Lambda}(\bm{Y})}{\U}}
		= \norm{\m- \II_{\Lambda} \m}{L^{2}_{\bmu}(\XX_{\R}, \U)}.
	$
	
	To draw approximate samples from the random solution of the \emph{stochastic} PDE \eqref{eq:SLLG}, we: 
	\begin{enumerate}
		\item Sample a Wiener process $W(\omega, \cdot)$;
		\item Compute the first $N_{\Lambda}$ coordinates $\bm{Y} = (Y_1, \dots, Y_{N_{\Lambda}}) \in \R^N$ of its LCE~\eqref{eq:def_LCE};
		\item Compute $\m_{\Lambda}(\bm{Y})$, the approximate solution to the random LLG equation \eqref{eq:rnd_LLG_problem};
		\item Finally, compute the inverse Doss-Sussmann transform
		$\bM_{\Lambda}\coloneqq e^{W(\bm{Y}) G} \m_{\Lambda}(\bm{Y})$.
	\end{enumerate}
	For the last step, recall the convenient expression for $e^{WG}$ in \eqref{eq:def_eWG}.The approximation error is again comparable to the one form the previous theorem. 
	Indeed, the Doss-Sussmann transform implies that
	$ \sqrt{\E_{W} \norm{\bM - \bM_{\Lambda}}{\U}}
	= \norm{e^{WG}\left( \m - \m_{\Lambda}\right)}{}$, where we denote by $\norm{\cdot}{}$ the root-mean-square error.
	Identity \eqref{eq:def_eWG} followed by a triangle inequality then gives 
	\begin{align*}
		\norm{\bM - \bM_{\Lambda}}{} &\leq \left(1+\norm{\bg}{L^{\infty}(D)}+\norm{\bg}{L^{\infty}(D)}^2\right)\norm{\m-\m_{\Lambda}}{}.
	\end{align*}
\end{remark}  

\revision{
\begin{remark}[Comparison with Monte Carlo quadrature] \label{rk:Comparison_MC}
	While the sparse grid interpolation does not offer any rate advantages over Monte Carlo to compute statistical quantities, it provides much more information on the solution. 
	For example, it can be used to approximate minima and maxima of scalar quantities of interest of the solution.
\end{remark} 
}
\subsection{Numerical tests}
We numerically test the convergence of the sparse grid interpolation defined above. \
Since no exact sample path of the solution is available, we apply the space and time approximation from \cite{Akrivis2021Highorder}. 
This method is based on the \emph{tangent plane scheme}. In particular, it inherits the advantage of solving one \emph{linear} elliptic problem per time step with finite elements. 
The time-stepping is based on a BDF formula. The method is high-order for both finite elements and BDF discretizations. 

We consider the problem on the 2D domain $D=[0,1]^2$ with $z=0$. The final time is $T=1$.
The noise coefficient is defined as 
\begin{align}\label{eq:g_example}
\bg(\bx) = \left(-\frac{1}{2}\cos(\pi x_1), -\frac{1}{2}\cos(\pi x_2), 
\sqrt{1-\left(\frac{1}{2}\cos(\pi x_1) \right)^2-\left(\frac{1}{2}\cos(\pi x_2) \right)^2} \right).    
\end{align}
Observe that $\partial_n \bg = 0$ on $\partial D$ and that $\vert \bg\vert = 1$ on $D$.
The initial condition is $\bM^0 = (0,0,1)$. 

The space discretization is order 1 on a structured triangular mesh with $2048$ elements and mesh-size $h>0$.
The time discretization is order 1 on $256$ equispaced time steps of size $\tau>0$.
We use piecewise affine sparse grid, corresponding to $p=2$.
As for the multi-index selection, we compare two strategies:
\begin{itemize}
\item The basic profit from Section \ref{sec:midset_selection_theoretical}. We recall that
\begin{align*}
\tilde{\PP}_{\bnu} = \prod_{i:\nu_i=1} 2^{-\frac{1}{2}\ell(i)} \prod_{i:\nu_i>1} \left(2^{\nu_i + \frac{1}{2}\ell(i)} \right)^{-p}\left(\prod_{i:\nu_i\geq1} p2^{\nu_i}\right)^{-1}\qquad\forall\bnu\in\FF,
\end{align*}
where $\ell(i) = \lceil\log_2(i)\rceil$. Compared to \eqref{eq:basic_profit}, here we have set $C_1=C_2=\eps=1$ and $\alpha=0$ for simplicity.
\item A modified version of the improved profit from Section \ref{sec:improved_profit}, namely
\begin{align}\label{eq:profit_L1_numerics}
\PP_{\bnu} = \prod_{i:\nu_i=1} 2^{-\frac{3}{2}\ell(i)} \prod_{i:\nu_i>1} \left(2^{\nu_i+\frac{1}{2}\ell(i)}\right)^{-p}\left(\prod_{i:\nu_i\geq1} p2^{\nu_i}\right)^{-1}\qquad\forall\bnu\in \FF,
\end{align}
where again $\ell(i) = \lceil\log_2(i)\rceil$. Compared to Section~\ref{sec:improved_profit}, we have set $C_1=C_2=\gamma=1$ and neglected the factor $r_{\ell}(\bnu)$.
\end{itemize}

We estimate the approximation error of the sparse grid approximations with the following computable quantity:
$\frac{1}{N} \sum_{i=1}^N \norm{\m_{\tau h}(\by_i) - \II_{\Lambda}[\m_{\tau h}] (\by_i)}{L^{2}(0,T, H^1(D))}$, where $N=1024$, $\left(\by_i\right)_{i=1}^N$ are i.i.d. standard normal samples of dimension $2^{10}$ each and $\m_{hk}(\by_i)$ denotes the corresponding space and time approximation of the sample paths.

Observe that if the time step is $\tau = 2^{-n}$, then the parameter-to-finite element solution map depends only on the first $n+1$ levels of the Lévy-Ciesielski expansion. In our case, $n=8$, so the maximum relevant level is $L=9$, i.e. 512 dimensions. In the following numerical examples we always approximate fewer dimensions, which means that the time-discretization error is negligible compared to the parametric approximation error.
\pgfplotsset{
  every axis label/.append style={font=\small},
  every tick label/.append style={font=\small},
  legend style={nodes={scale=0.9, transform shape}}
}
\begin{figure}
	\hspace{-0.5cm}
	\begin{subfigure}{0.5\textwidth}
		\includegraphics{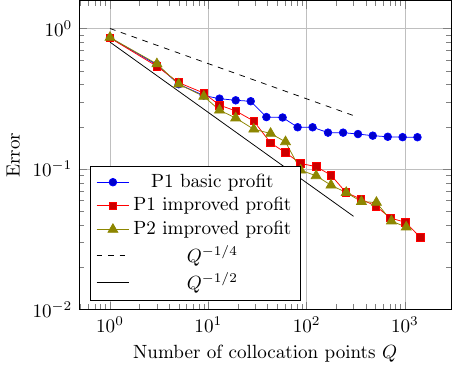}
	\end{subfigure}
	\begin{subfigure}{0.5\textwidth}
		\includegraphics{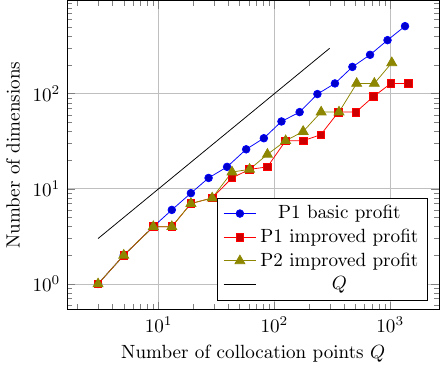}
	\end{subfigure} 
	
\hspace{-0.5cm}
	\begin{subfigure}{0.5\textwidth}
		\includegraphics{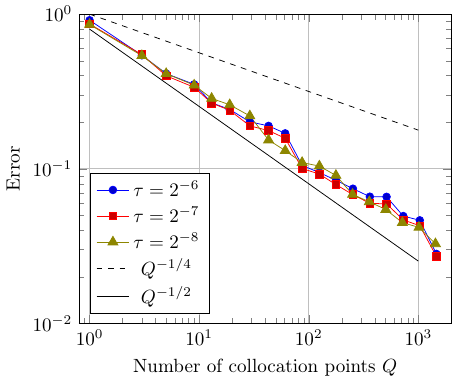}
	\end{subfigure} 
	\begin{subfigure}{\textwidth}
		\includegraphics{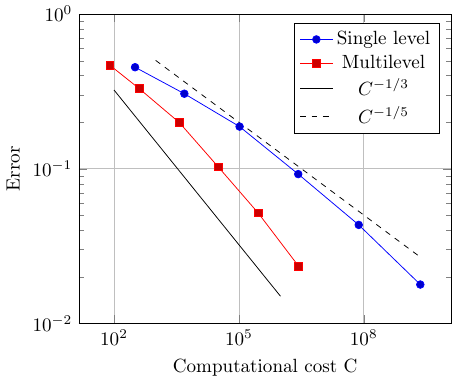}
	\end{subfigure}
	\caption{Approximation of $\by\mapsto\m(\by)$. Top left: Error vs. number of collocation nodes. Top right: Number of effective dimensions vs. number of collocation nodes. Bottom left: Comparison of convergence of the sparse grid approximation ($p=3$, i.e. piecewise quadratic) for different space and time discretization parameters. In all cases time step $\tau$ and mesh size $h$ are related by $h = 8\tau$. Bottom right: Comparison of single- and multilevel approximations based on piecewise polynomial sparse grid interpolation for the parametric approximation and the linearly implicit BDF-finite elements methods from \cite{Akrivis2021Highorder} for time and space approximation.}
	\label{fig:conv_test_SG}
\end{figure}
The results are displayed in Figure \ref{fig:conv_test_SG}.
In the top plot, we observe that using basic profits leads to a sub-algebraic convergence rate which decreases as the number of approximated dimensions increases. 
Conversely, improved profits leads to a robust algebraic convergence of order about $\frac{1}{2}$. 
Piecewise \emph{quadratic} interpolation is optimal as predicted in Section \ref{sec:improved_profit} and it delivers the same convergence rate as piecewise linear interpolation. Hence, the restriction in Theorem \ref{th:conv_SG_L1} is possibly an artifact of the proof.
In view of Remark \ref{rk:optimality_improved_profit_SG}, it seems unnecessary to test higher polynomial degrees.
In the bottom left plot, we observe that the number of active dimensions (i.e., those dimension which are seen by the sparse grid algorithm) grows similarly for all methods, with the basic profit having a slightly higher value.
Finally, we verify numerically that the approximation power of the method does not degrade when space and time approximations are refined, see the bottom left plot in Figure~\ref{fig:conv_test_SG}.

\section{Multilevel sparse grid collocation}\label{sec:ML}
In this section, we show how the sparse grid scheme defined and studied in this work can be combined with a method for space and time approximation to define a fully discrete approximation scheme. Here we employ again the linearly implicit BDF-finite element scheme from \cite{Akrivis2021Highorder}.

Given $\tau>0$, consider $N_{\tau}=\frac{T}{\tau}$ equispaced time steps on $[0,T]$.
Given $h>0$, define a quasi-uniform triangulation $\TT_h$ of the domain $D\in \R^d$ for $d\in\N$ with mesh-spacing $h$.
Denote, for any $\by\in\XX_{\R}$, $\m_{\tau h}(\by)$ the space and time approximation of $\m(\by)$.
Assume that there exists a constant $C_{\rm FE}>0$ independent of $h$ or $\tau$ such that
\begin{align*}
\normT{\m - \m_{\tau,h}} \leq C_{\rm FE} (\tau+h).
\end{align*}
Moreover, we assume that the computational cost (number of floating-point operations) of computing a single $\m_{\tau h}(\by)$ is proportional to
\begin{align*}
C_{\rm sample}(\tau, h) = \tau^{-1} h^{-d}.
\end{align*}
Indeed, the numerical scheme requires, at each time step, solving a linear system of size proportional to the number of elements of $\TT_h$, which in turn is proportional to $h^{-d}$. The latter operation can be executed with empirical linear complexity using GMRES with multigrid preconditioning. See~\cite{precond}, for a mathematically rigorous preconditioning strategies for LLG.

Theorem~\ref{th:conv_SG_L1} shows that there exists $C_{\rm SG}>0$ and  $0 < r < \frac{1}{2}$ such that, denoting $\II_{\Lambda}$ the sparse grid interpolant and $\HH_{\Lambda}$ the corresponding sparse grid,
\begin{align*}
\normT{\m - \II_{\Lambda}[\m]} \leq C_{\rm SG} \left(\# \HH_{\Lambda}\right)^{-r}.
\end{align*}

A \emph{Multilevel} approximation of $\m$ can be defined following \cite{teckentrup2015multilevel}. Let $K\geq 0$ and consider a sequence of approximation parameters $\left(\Lambda_k\right)_{k=0}^K$, $\left(\tau_k\right)_{k=0}^K$ and $\left(h_k\right)_{k=0}^K$. 
Denote $\m_k = \m_{\tau_k, h_k}$ for $0\leq k\leq K$ and $\m_{-1}\equiv 0$. Define the multilevel approximation as
\begin{align*}
\m_K^{\rm ML} \coloneqq \sum_{k=0}^K \II_{\Lambda_k}\left[ \m _{K-k} - \m _{K-k-1}\right].
\end{align*}
The computational cost is proportional to 
$ C^{\rm ML}_{K} = \sum_{k=0}^K \#\HH_{\Lambda_k} C_{\rm sample}(\tau_{K-k}, h_{K-k})$.
As suggested in~\cite{teckentrup2015multilevel} we choose the multi-index sets $\Lambda_k$ such that
\begin{align}\label{eq:ML_choice_SG}
\left(\#\HH_{\Lambda_{K-k}}\right)^{-r} \leq C_{\rm FE} \left(C_{\rm SG} (K+1)\right)^{-1} \frac{\tau_K+h_K}{\tau_k+h_k},
\end{align}
which implies the error-vs-cost estimate
\begin{align}\label{eq:ML_convergence_rate}
\normT{\m - \m_{K}^{\rm ML}} \lesssim \left(C^{\rm ML}_{K}\right)^{-\frac{1}{d+1}}.
\end{align}
Note the significant improvement over the single-level case with $-\frac{1}{\frac{1}{r}+(d+1)}$ in the exponent.

We compare numerically single- and multilevel schemes on the following example of relaxation dynamics with thermal noise. The domain is $D=[0,1]^2$ with $z=0$. The final time is $T=1$. The noise coefficient $\bg$ is set to one fifth of the coefficient defined in~\eqref{eq:g_example}.
The initial condition $\bM^0$ coincides with \eqref{eq:g_example}.
The time and space approximations are both of order 1. The sparse grid scheme is piecewise linear and the multi-index sets are built using the improved profit \eqref{eq:profit_L1_numerics} from the previous numerical experiments. 
Observe that, in the following convergence tests, refinement leads automatically to an increase of the number of approximated parameters and a reduction of the parametric \emph{truncation} error.
We consider $0\leq K\leq 5$ and define  $\tau_k = 2^{-k-2}$, $h_k = 2^{-k}$, and $\Lambda_k$ using the same profit-maximization as in the previous section.

For the single-level approximation, we choose $\Lambda_k$ minimal such that $\#\HH_{\Lambda_k} > 2^{2k}$. This choice corresponds to assuming that the sparse grid approximation converges with order $r=\frac{1}{2}$ with respect to the number of collocation nodes.
We compute a sequence of single-level approximations $\m^{\rm SL}_{\Lambda_k, \tau_k, h_k}$ for $k=0,\dots K$ and show the results in the bottom-right of Figure~\ref{fig:conv_test_SG}.

For the multilevel approximation, we follow formula \eqref{eq:ML_choice_SG}. The constants $C_{\rm FE}\approx 0.7510$, $C_{\rm SG}\approx 0.1721$ and $r\approx 0.4703$ are determined with short sparse grid and finite element convergence tests. We obtain:
\begin{center}
\begin{tabular}{c | c c c c c c}
K & $\#\HH_{\Lambda_0}$ & $\#\HH_{\Lambda_1}$ & $\#\HH_{\Lambda_2}$ & $\#\HH_{\Lambda_3}$ & $\#\HH_{\Lambda_4}$ & $\#\HH_{\Lambda_5}$\\
\hline
0 & 1\\
1 & 1 & 3\\
2 & 1 & 3 & 10\\
3 & 1 & 4 & 18 & 82\\
4 & 2 & 7 & 27 & 131 & 602\\
5 & 2 & 10 & 42 & 193 & 887 & 1500\\
\end{tabular}
\end{center}
\revision{The cardinality of $\HH_{\Lambda_5}$ for $K=5$ should actually be at least 4082 if formula~\eqref{eq:ML_choice_SG} is used. Here, we reduce it to 1500 in order to guarantee reasonable computational times.}

Since the solution in closed form is not available, we approximate it with a reference solution. We consider $128$ Monte Carlo samples of $W$ and approximate the corresponding sample paths in space and time with time step $\tau_{\rm ref} = 2^{-9}$ and mesh size $h_{\rm ref} = 2^{-7}$. 

\appendix
\section{Definitions of Hölder spaces and proofs of lemmata for space and time Hölder regularity proof}\label{appendix:Holder_regularity}
We recall basic definitions, notation, and important facts about Hölder spaces. 
Let $n\in\N$, $D\subset \R^n$, $\alpha\in(0,1)$,  $v:D\rightarrow \C$. The Hölder-seminorm reads 
\begin{align*}
	\vert v\vert_{C^{\alpha}(D)} \coloneqq \sup_{\bx,\by\in D, \bx\neq \by} \frac{\vert v(\bx)-v(\by)\vert}{\vert \bx-\by\vert^{\alpha}},
\end{align*}
and by $C^{\alpha}(D)$ we denote the Banach space of functions with finite Hölder-norm 
\begin{align*}
	\norm{v}{C^{\alpha}(D)} \coloneqq \norm{v}{C^0(D)} + \seminorm{v}{C^{\alpha}(D)}.
\end{align*}
Hölder spaces are closed under multiplication: If $u,v\in C^{\alpha}(D)$, then $uv\in C^{\alpha}(D)$.

Higher Hölder regularity of order $k\in\N$ is characterized by the seminorm 
\begin{align*}
	\vert v \vert_{C^{k+\alpha}(D)} \coloneqq \sum_{j=1}^k \vert D^j v\vert_{C^{\alpha}(D)}.
\end{align*}
The corresponding Banach space and norm read respectively
\begin{align*}
	C^{k+\alpha}(D) &\coloneqq \set{v:D\rightarrow \C}{ D^{j}v\in C^{\alpha}(D)\ \forall j =0,\dots,k},\\
	\norm{v}{C^{k+\alpha}(D)} &\coloneqq \sum_{j =0}^k \norm{D^jv}{C^{\alpha}(D)}.
\end{align*}
Again $u,v\in C^{k+\alpha}(D)$ implies $uv\in C^{k+\alpha}(D)$.

Recall that we employ the short notation to denote the Hölder norms, for example $\seminorm{\cdot}{\alpha} = \seminorm{\cdot}{C^{\alpha}(D)}$ and analogously for other norms.

We now give a proof of Theorem~\ref{th:Holder_sample_paths}, which is inspired by~\cite{feischl2017existence}. 
The proofs in the mentioned work require higher temporal regularity than is available for stochastic LLG, which we circumvent by the use of H\"older spaces instead of Sobolev spaces.

To prove Hölder regularity of sample paths, we work with the following equivalent form of \eqref{eq:rnd_LLG_problem}, obtained using algebraic manipulations including the triple product expansion and the fact that $\seminorm{\m}{}=1$ for all $t\in[0,T]$ and a.e. $\bx\in D$:
\begin{align}\label{eq:rndLLG_alternative_form}
	\lambda\partial_t\m +\m\times\partial_t\m = \Delta \m + \vert\nabla \m\vert^2\m - \m\times\left(\m\times \hC(W, \m) \right),
\end{align}
where we recall that $\lambda>0$ is the Gilbert damping parameter and $\hC$ was defined in \eqref{eq:def_CHat}.

For the proof, we require some additional notation:
\begin{align*}
	H(\bu,\bv,\bw) &\coloneqq \bu\times(\bv\times\hC(W,\bw)) \quad \forall\bu,\bv\in C^{\alpha/2,\alpha}(D_T), \bw\in C^{\alpha/2,1+\alpha}(D_T), \\
	\RR_a(\bv) &\coloneqq  
		\lambda\partial_t \bv + \bv\times\partial_t\bv - \vert\bv\vert^2\Delta \bv -\vert\nabla \bv\vert^2\bv + H(\bv,\bv,\bv) 
		\quad \forall\bv\in C^{1+\alpha/2,2+\alpha}(D_T),\\
	L\bv &\coloneqq L_{\bx_0}\bv \coloneqq \lambda\bv + \bM^0(\b{x}_0) \times \bv \quad \forall\bx_0\in D, \bv\in C^{\alpha/2,\alpha}(D_T).
\end{align*}
We note that $\RR_a$ is the residual defined from the alternative form \eqref{eq:rndLLG_alternative_form} of the LLG equation; confer \eqref{eq:parametric_problem}.

We will require a couple of technical results.
\begin{lemma}[Continuity of the trilinear form $H$ and of the LLG residual $\RR_a$]\label{lemma:cont_residual_LLG}
	If $\bu$, $\bv\in C^{\alpha/2,\alpha}(D_T)$ and $\bw \in C^{\alpha/2,1+\alpha}(D_T)$, then $H(\bu,\bv,\bw)\in C^{\alpha/2,\alpha}(D_T)$ and
	\begin{align}\label{eq:estimate_trilinear_form}
		\norm{H(\bu, \bv, \bw)}{\alpha/2, \alpha} \leq C_{\bg} \norm{\bu}{\alpha/2,\alpha}\norm{\bv}{\alpha/2,\alpha} \norm{\bw}{\alpha/2,1+\alpha},
	\end{align}
	where $C_{\bg} \coloneqq \left(1+\norm{\bg}{1+\alpha}\right)^3 \left(\norm{\nabla\bg}{\alpha}+\norm{\Delta\bg}{\alpha}\right)$.
	Moreover, if $\bv \in C^{1+\alpha/2, 2+\alpha}(D_T)$, then \\ $\RR_a(\bv)\in C^{\alpha/2,\alpha}(D_T)$ and
	\begin{align}\label{eq:Rav}
		\normRes{\RR_a(\bv)} \leq
		\left(\seminormMag{\bv}+\seminormMag{\bv}^2\right)
		 \left(\lambda+\normMag{\bv}\right)^2
		 + C_{\bg} \normRes{\bv}^2 \norm{\bv}{\alpha/2, 1+\alpha}.
	\end{align}
	In particular, $\normRes{\RR_a(\bv)}$ vanishes when $\seminormMag{\bv}$ and $\normRes{\nabla \bg}+\normRes{\Delta \bg}$ both vanish.
	\end{lemma}
	\begin{proof}
	To prove \eqref{eq:estimate_trilinear_form}, note the following elementary estimates 
	\begin{align*}
	\normRes{\CC \bv} &\leq 2\normRes{\nabla \bv} \norm{\nabla \bg}{\alpha} + \normRes{\bv} \norm{\Delta\bg}{\alpha},
	\\
	\normRes{\CC G\bv} &\leq \normRes{\bv} \norm{\bg}{\alpha}\norm{\Delta\bg}{\alpha} + 2(\normRes{\nabla \bv} \norm{\bg}{\alpha} + \normRes{\bv}\norm{\nabla\bg}{\alpha}) \norm{\nabla\bg}{\alpha},
	\\
	\normRes{\mathcal{E}(s, \bv)} &\leq \normRes{\CC \bv} + \normRes{\CC G \bv} + \norm{\bg}{\alpha} \normRes{\CC \bv},
	\\
	\normRes{\hC(s, \bv)} &\leq \left(1 + \norm{\bg}{\alpha} + \norm{\bg}{\alpha}^2\right) \normRes{\mathcal{E}(s, \bv)},\\
	\normRes{H(\bu,\bv,\bw})
	&\leq 
	\normRes{\bu}
	\normRes{\bv}
	\normRes{\hC(W, \bw)}.
	\end{align*}
	Putting these facts together, one obtains \eqref{eq:estimate_trilinear_form}.
	To get the second inequality \eqref{eq:Rav}, estimate
	\begin{align*}
	\normRes{\RR_a(\bv)}
	&\leq \lambda\seminormMag{\bv}
	+ \normMag{\bv}\seminormMag{\bv}\\
	&\quad +\normMag{\bv}^2 \seminormMag{\bv}
	+ \seminormMag{\bv}^2 \normMag{\bv}
	+ \normRes{H(\bv, \bv, \bv)}{}\\
	&\leq \left(\seminormMag{\bv}+\seminormMag{\bv}^2\right)
	 \left(\lambda+\normMag{\bv}\right)^2
	+ \normRes{H(\bv, \bv, \bv)}{}.
	\end{align*}
	Using \eqref{eq:estimate_trilinear_form} to estimate the last term yields \eqref{eq:Rav}. 
\end{proof}
Additionally, we need some finer control over the boundedness of $\RR_a$. The point of the following result is that all terms apart from the first one on the right-hand side of the estimate in Lemma \ref{lemma:R_difference} below are either at least quadratic in $\bw$ or can be made small by choosing $\bv$ close to a constant function. This will allow us to treat the nonlinear parts as perturbations of the heat equation.
\begin{lemma}\label{lemma:R_difference}
	For $\bv, \bw\in C^{1+\alpha/2, 2+\alpha}(D_T)$ and $\bx_0\in D$, there holds
	\begin{align*}
		\normRes{\RR_a(\bv-\bw)} &\leq  
		\normRes{\RR_a(\bv) - (L\partial_t-\Delta)\bw}
		+\normRes{\bv-\bM^0(\b{x}_0)} \normMag{\bw}\\
		&+\normRes{\left(1-\vert\bv\vert^2\right) \Delta \bw}\\
		&+\normMag{\bw} \left(\seminormMag{\bv}+ C_{\bg}\right) \left(1+\normMag{\bv}\right)^2\\
		&+\normMag{\bw}^2\left(1 + (1+C_{\bg})\normMag{\bv}\right)
		+\normMag{\bw}^3 (1+C_{\bg}),
	\end{align*}
	where $C_{\bg}>0$ is defined in Lemma \ref{lemma:cont_residual_LLG}.
\end{lemma}
\begin{proof}
	All but the last term in the definition of $\RR_a$ are estimated as in \cite{feischl2017existence}. As for the last term, observe that 
	\begin{align*}
		H(\bv-\bw,\bv-\bw,\bv-\bw) &= H(\bv, \bv, \bv)-H(\bw, \bw, \bw)\\
		 &- H(\bw, \bv, \bv) - H(\bv, \bw, \bv) - H(\bv, \bv, \bw)\\
		 &+ H(\bv, \bw, \bw) + H(\bw, \bv, \bw) + H(\bw, \bw, \bv).
	\end{align*}
	The term $H(\bv, \bv, \bv)$ is absorbed in $\RR_a(\bv)$. Then, by the previous lemma:
	\begin{align*}
		\norm{- H(\bw, \bv, \bv) - H(\bv, \bw, \bv) - H(\bv, \bv, \bw)}{\alpha/2, \alpha} &\lesssim 
			C_{\bg} \norm{\bw}{\alpha/2, 1+\alpha} \norm{\bv}{\alpha/2, 1+\alpha}^2,\\
		\norm{H(\bv, \bw, \bw) + H(\bw, \bv, \bw) + H(\bw, \bw, \bv)}{\alpha/2, \alpha} &\lesssim 
			C_{\bg} \norm{\bw}{\alpha/2, 1+\alpha}^2 \norm{\bv}{\alpha/2, 1+\alpha},\\
		\norm{-H(\bw, \bw, \bw)}{\alpha/2, \alpha} &\lesssim 
			C_{\bg} \norm{\bw}{\alpha/2, 1+\alpha}^3.
	\end{align*}
	Altogether, we obtain the required result.
\end{proof}

To prove Theorem~\ref{th:Holder_sample_paths}, we use a fixed point iteration. 
\begin{proof}[Proof of Theorem~\ref{th:Holder_sample_paths}]
Consider the initial guess $\m_0(t,\b{x}) = \bM^0(\b{x})$ for all $t\in[0,T]$, $\bx\in D$, and fix one $\bx_0\in D$ (for the definition of $L=L_{\bx_0}$).
Define the sequence $\left(\m_{\ell}\right)_{\ell}$ as follows:
For $\ell = 0,1,\dots$
\begin{enumerate}
	\item Define $\br_{\ell} := \RR_a(\m_{\ell})$
	\item Solve
	\begin{align*}
	\begin{cases}
	L \partial_t \bR_{\ell} - \Delta \bR_{\ell} &= \br_{\ell}\qquad \textrm{in } D_T,\\
	\partial_n  \bR_{\ell} &= 0\qquad \textrm{on } [0,T]\times \partial D,\\
	\bR_{\ell}(0) &= 0\qquad \textrm{on } D.
	\end{cases}
	\end{align*}
	\item Update $\m_{{\ell}+1} := \m_{\ell} - \bR_{\ell}.$
\end{enumerate}
\emph{Step 1 (Well-posedness):}
By definition, we have $\m_0\in C^{1+\alpha/2, 2+\alpha}(D_T)$ as well as $\partial_n \m_0=0$. 
Assume that $\m_{\ell}\in C^{1+\alpha/2, 2+\alpha}(D_T)$ and $\partial_n \m_{\ell} = 0$. Then, Lemma~\ref{lemma:cont_residual_LLG} implies that $\br_\ell\in C^{\alpha/2, \alpha}(D_T)$. The parabolic regularity result~\cite[Theorem 10.4,\S 10, VII]{ladyzhenskaya1968linear} yields $\bR_{\ell}\in C^{1+\alpha/2, 2+\alpha}(D_T)$.

\emph{Step~2 (Convergence):}
We show the Cauchy property of the sequence $(\m_\ell)_\ell$: Fix $0\leq \ell' < \ell<\infty$ and observe that 
$ \normMag{\m_{\ell} - \m_{\ell'}} \leq \sum_{j = \ell'}^{\ell-1} \normMag{\bR_j}.$
By the previous lemmata, we have
\begin{align}\label{eq:initial_estim_R_jp1}
\normMag{\bR_{j+1}} \leq C_s \normRes{\br_{j+1}} = C_s \normRes{\RR_a(\m_{j+1})} = C_s \normRes{\RR_a(\m_{j} - \bR_{j})},
\end{align}
where $C_s>0$ is the stability constant from~\cite[Theorem 10.4,\S 10, VII]{ladyzhenskaya1968linear}, which only depends on $D_T$ and $L$ (particularly, it is independent of $\ell$).
We invoke Lemma \ref{lemma:R_difference} with $\bv = \m_j$ and $\bw=\bR_j$. By construction, $\RR_a(\m_j) - \left(L\partial_t - \Delta\right) \bR_j = 0$. What remains is estimated as
\begin{align}\label{eq:boundRj}
\begin{split}
\normRes{\RR_a(\m_j-\bR_j)} 
&\leq \normRes{\m_j-\bM^0(\bx_0)} \normMag{\bR_j}
+\normRes{\left(1-\vert\m_j\vert^2\right) \Delta\bR_j} \\
&+\normMag{\bR_j}\left(\seminormMag{\m_j}+C_{\bg}\right) \left(1+\normMag{\m_j}\right)^2\\
&+\normMag{\bR_j}^2\left(1 + (1+C_{\bg})\seminormMag{\m_j}\right)\\
&+\normMag{\bR_j}^3 (1+C_{\bg}).
\end{split}
\end{align}
Let us estimate the first term in \eqref{eq:boundRj}. For any $(t,\bx)\in D_T$, the fundamental theorem of calculus yields
$\vert\m_j(\bx,t)-\bM^0(\bx_0) \vert 
\lesssim \norm{(\partial_t, \nabla) \m_j}{C^0(D_T)} 
\leq \seminormMag{\m_j}$. Analogously, we get
\begin{align*}
\normRes{\m_j-\bM^0(\bx_0)}
&\leq 2 \seminormMag{\m_j}.
\end{align*}
Let us estimate the second term in \eqref{eq:boundRj}. Since $\m_j = \m_0 + \sum_{i=0}^{j-1} \bR_i$ and $\seminorm{\m_0}{}=1$ a.e., we have 
$\vert\m_j\vert^2 = 1 + 2\m_0\cdot \sum_{i=0}^{j-1} \bR_i + \left(\sum_{i=0}^{j-1} \bR_i\right)^2.$
Thus, the fact that Hölder spaces are closed under multiplication and the triangle inequality imply
\begin{align*}
\normRes{1-\vert\m_j\vert^2}
\leq 2\normRes{\m_0} \normRes{\sum_{i=0}^{j-1} \bR_i} + \left(\sum_{i=0}^{j-1} \normRes{\bR_i}\right)^2.
\end{align*}
All in all, we obtain 
\begin{align}\label{eq:contraction}
\normMag{\bR_{j+1}} &\leq \tilde{C} Q_j \normMag{\bR_{j}},
\end{align}
where $\tilde{C}>0$ is independent of $j$ and 
\begin{align*}
Q_j&\coloneqq 
\seminormMag{\m_j}
+ \normRes{\m_0}\normRes{\sum_{i=0}^{j-1} \bR_i} 
+ \left(\sum_{i=0}^{j-1} \normRes{\bR_i}\right)^2 \\
& + \left(\seminormMag{\m_j}+C_{\bg}\right) \left(1+\normMag{\m_j}\right)^2\\
& + \normMag{\bR_j} (1+(1+C_{\bg})\seminormMag{\m_j})
+ \normMag{\bR_j}^2(1+C_{\bg}).
\end{align*}
It can be proved that for any $q\in(0,1)$ there exists $\eps>0$ such that $\tilde{C} Q_j < q$ for all $j\in\N$. 
One proceeds by induction, as done in~\cite{feischl2017existence}, using additionally the assumption on the smallness of $\nabla \bg$ and $\Delta \bg$.
Therefore, $\normMag{\bR_{j+1}} \leq q \normMag{\bR_{j}}$, which implies that $(\m_\ell)_\ell$ is a Cauchy sequence in $C^{1+\alpha/2,2+\alpha}(D_T)$.
Hence, we find a limit $\m\in C^{1+\alpha/2,2+\alpha}(D_T)$ and the arguments above already imply the estimate in Theorem~\ref{th:Holder_sample_paths}.

\emph{Step~3 ($\m$ solves~\eqref{eq:rndLLG_alternative_form}):} $\m$ fulfills the initial condition $\m(0) = \bM^0$ (and thus $|\m(0)|=1$) and boundary condition $\partial_n \m = 0$ on $[0,T]\times \partial D$ by the continuity of the trace operator. The continuity of $\RR_a$ and the contraction \eqref{eq:contraction} imply
\begin{align*}
\normRes{\RR_a(\m)}
=\lim_{\ell} \normRes{\RR_a(\m_{\ell})} 
\lesssim \lim_{\ell} \normMag{\bR_{\ell}} 
\leq \lim_{\ell} q^{\ell} \normMag{\bR_0} = 0.
\end{align*}
The arguments of the proof of~\cite[Lemma~4.8]{feischl2017existence} show that $\RR_a(\m)=0$ implies that $\m$ solves~\eqref{eq:rndLLG_alternative_form} and hence concludes the proof.
\end{proof}

\section{Proofs of lemmata for sparse grid interpolation convergence}\label{appendix:SG_convergence}
In this appendix, we prove technical results needed in the analysis of the sparse grid interpolation in Section~\ref{sec:SG}. We refer to the same section for the definition of the objects and notation used here.

The following result is a standard interpolation error estimate on weighted spaces which, in this precise form, we could not find in the literature.
\revision{See Section~\ref{sec:1D_interpol} for the definition of objects such as the level-to-knot function $m(\nu)$, the knots family $\YY_\nu$ and the piecewise polynomial interpolant  $I_\nu[\cdot]$.}
\begin{lemma}\label{lemma:conv_1D_interpol_R}
	Consider $u:\R\rightarrow \R$ with $\partial u\in L^2_{\tmu}(\R)$. Then,
	\begin{align*}
	\norm{u- I_0 [u]}{L^2_{\mu}(\R)} \leq \tilde{C}_1 \norm{\partial u}{L^2_{\tmu}(\R)},
	\end{align*}
	where $\tilde{C}_1 = \sqrt{\int_{\R} \seminorm{y}{} \tmu^{-1}(y) \rmd \mu(y) }$. 
	If additionally, $\partial^p u\in L^2_{\tmu}(\R)$ for $p\geq 2$, then
	\begin{align*}
		\norm{u- I_\nu[u]}{L^2_{\mu}(\R)} 
		\leq \tilde{C}_2 (m(\nu)+1)^{-p} \frac{\norm{\partial^p u}{L^2_{\tmu}(\R)}}{p!}\qquad\forall\nu\geq 1,
	\end{align*}
	where $\tilde{C}_2 = \sqrt{C_{\phi}\frac{p}{2} \left(m(\nu)-1+2^{2p+1}\right) }$ and $C_{\phi}$ was defined in \eqref{eq:def_Cphi}.
\end{lemma}
\begin{proof}
	For the first estimate, the fundamental theorem of calculus and Cauchy-Schwarz inequality yield
	$ u(y)-u(0) = \int_0^y \partial u \leq \norm{\partial u}{L^2_{\tmu}(\R)} \sqrt{\int_0^y \tmu^{-1}}$.
	Substitute this in $\norm{u- I_0[u]}{L^2_{\mu}(\R)} $ to obtain the first estimate.
	
	For the second estimate, let $i\in\left\{\frac{m(\nu)+1}{2},\dots,m(\nu)-2\right\}$. 
	Apply the fundamental theorem of calculus $p$ times and recall that ${I_\nu[u]}|_{[y_i, y_{i+1}]}\in P_{p-1}([y_i, y_{i+1}])$ to obtain: 
	\revision{
	\begin{align}\label{eq:tmp_1D_conv}
		\left(u-I_\nu[u]\right)(y) 
		= \int_{y_i}^y \int_{\xi_1}^{z_1}\dots \int_{\xi_{p-1}}^{z_{p-1}} \partial^p u
		\qquad \forall y\in [y_i, y_{i+1}],
	\end{align}
	where $\xi_j\in[y_i, y_{i+1}]$ is such that $\partial^j(u-I_\nu[u])(\xi_j)=0$ for any $j=1,\dots,p-1$.
	Let us now focus on the last integral. The Cauchy-Schwarz inequality gives
	\begin{align*}
		\int_{\xi_{p-1}}^{z_{p-1}} \partial^p u 
		\leq \norm{\partial^p u}{L^2_{\tmu}(\xi_{p-1}, z_{p-1})} \sqrt{ \int_{\xi_{p-1}}^{z_{p-1}} \tmu^{-1} }.
	\end{align*} 
	The monotonicity of the integral with respect to the integration domain and the fact that 
	$\tmu^{-1}$ is monotonically increasing on the positive semi-axis give
	\begin{align*}
		\int_{\xi_{p-1}}^{z_{p-1}} \partial^p u 
		\leq \norm{\partial^p u}{L^2_{\tmu}(y_i, y_{i+1})} \tmu^{-1}(y) \sqrt{z_{p-1}-y_i}.
	\end{align*}
	Applying this to~\eqref{eq:tmp_1D_conv} and after integration, we obtain
	\begin{align*}
		\left(u-I_\nu[u]\right)(y) 
		\leq \norm{\partial^p u}{L^2_{\tmu}(y_i, y_{i+1})} \tmu^{-1/2}(y) 
			\frac{\seminorm{y-y_i}{}^{p-1+\frac{1}{2}}}{(p-1)!}.
	\end{align*}
	}
	Thus,
	\begin{align*}
		\int_{y_i}^{y_{i+1}} \seminorm{u(y)-I_\nu[u](y)}{}^2 \rmd \mu(y)
		\leq \left(\frac{\norm{\partial^p u}{L^2_{\tmu}(y_i, y_{i+1})}}{(p-1)!}\right)^2 \int_{y_i}^{y_{i+1}} \seminorm{y-y_i}{}^{2p-1} \tmu^{-1}(y) \rmd \mu(y).
	\end{align*}
	In order to estimate the last integral, change variables using $\phi$ defined in \eqref{eq:def_nodes_transform}. We get
	\begin{align*}
		\int_{y_i}^{y_{i+1}} \seminorm{y-y_i}{}^{2p-1} \tmu^{-1}(y) \rmd \mu(y)
		\leq \int_{x_i}^{x_{i+1}} \seminorm{\phi(x)-\phi(x_i)}{}^{2p-1} \tmu^{-1}(\phi(x)) \mu(\phi(x)) \phi'(x) \rmd x.
	\end{align*}
	A Taylor expansion and the fact that $\phi'$ is increasing give $\phi(x)-\phi(x_i) \leq \phi'(x) (x-x_i)$. Thus,
	\begin{align*}
		\int_{y_i}^{y_{i+1}} \seminorm{y-y_i}{}^{2p-1} \tmu^{-1}(y) \rmd \mu(y)
		\leq \int_{x_i}^{x_{i+1}} (x-x_i)^{2p-1} \left(\phi'(x)\right)^{2p} \tmu^{-1}(\phi(x)) \mu(\phi(x)) \rmd x.
	\end{align*}
	Recall now that $\left(\phi'(x)\right)^{2p} \tmu^{-1}(\phi(x)) \mu(\phi(x)) \equiv C_{\phi}$ defined in~\eqref{eq:def_Cphi}. Integration yields
	\begin{align*}
		\int_{y_i}^{y_{i+1}} \seminorm{y-y_i}{}^{2p-1} \tmu^{-1}(y) \rmd \mu(y)
		\leq \frac{(m+1)^{-2p}}{2p} C_{\phi}.
	\end{align*}
	For the original quantity, we get
	\begin{align*}
		\int_{y_i}^{y_{i+1}} \seminorm{u-I_\nu[u]}{}^2(y) \rmd \mu(y)
		\leq C_{\phi}\frac{p^2}{2p}  (m+1)^{-2p} \left(\frac{\norm{\partial^p u}{L^2_{\tmu}(y_i, y_{i+1})}}{p!}\right)^2.
	\end{align*}
	For $i=m(\nu)-1, m(\nu)$, recall that $I_\nu$ is defined in $[y_{m(\nu)},+\infty)$ as the polynomial extension from the previous interval. 
	Analogous estimates give
	\begin{align*}
		\int_{y_{m(\nu)-1}}^{+\infty} \seminorm{u-I_{\nu}[u]}{}^2(y) \rmd \mu(y)
		\leq C_{\phi}\frac{p^2}{2p} 2^{2p} (m(\nu)+1)^{-2p} 
			\left(\frac{\norm{\partial^p u}{L^2_{\tmu}(y_i, y_{i+1})}}{p!}\right)^2.
	\end{align*}
	Finally, together with analogous estimates for $i\leq \frac{m(\nu)+1}{2}$, this gives the second estimate in the statement.
\end{proof}

\begin{lemma}\label{lemma:estimate_details}
	Consider $u:\R \rightarrow \R$, a continuous function with $\partial u\in L^2_{\tmu}(\R)$ and $p\geq 2$. There holds
	\begin{align*}
		\norm{\Delta_1[u]}{L^2_{\mu}(\R)} \leq C_1 \norm{\partial u}{L^2_{\tmu}(\R)},
	\end{align*}
	where 
	$C_1 = 2^{3/2} \tilde{C}_1  \sqrt{\int_0^{\infty} \sum_{j=1}^p \seminorm{l_j'}{}^2\rmd \tmu}  \sqrt[4]{\int_0^{y_3} \tmu^{-1}}$, 
	$\tilde{C}_1>0$ was defined in the previous lemma, $y_1,y_2, y_3$ delimit the intervals of definition of the piecewise polynomial $I_1[u]$ and $(l_j)_{j=1}^p$ is the Lagrange basis of $P_{p-1}([y_2, y_3])$ with respect to $y_2, y_3$ and other $p-2$ district points in $(y_2, y_3)$.\\
	If additionally $\partial^p u \in L^2_{\tmu}(\R)$, then we have 
	\begin{align*}
		\norm{\Delta_{\nu}[u]}{L^2_{\mu}(\R)} 
		\leq C_2 2^{-p\nu} \frac{\norm{\partial^p u}{L^2_{\tmu}(\R)}}{p!}
		\qquad \forall\nu > 1,
	\end{align*}
	where $C_2 = \tilde{C}_2 (1+2^{-p})$ and $\tilde{C}_2>0$ was defined in the previous lemma.
\end{lemma}
\begin{proof}
	To prove the first estimate, recall that nodes are nested so 
	$I_0[u] = I_0 \left[ I_1^p[u]\right]$. Thus,
	\begin{align*}
		\Delta_1[u] 
		= I_1[u] - I_0[u] 
		= I_1[u] - I_0\left[I_1[u]\right] 
		= (1-I_0)\left[I_1[u]\right].
	\end{align*}
	The previous lemma gives
	\begin{align*}
		\norm{\Delta_1[u]}{L^2{\mu}(\R)} 
		\leq \tilde{C}_1 \norm{\partial I_1[u]}{L^2_{\tmu}(\R)}.
	\end{align*}
	To estimate the last integral, consider $x_1=y_2<x_2<\dots< x_p=y_3$ the interpolation nodes in the interval $[y_2, y_3]$.
	Observe that $\partial I_1[u] = \partial I_1[u-u(0)]$ and estimate
	\begin{align*}
		\int_0^{\infty} \seminorm{\partial I_1[u]}{}^2\rmd \tmu
		&= \int_0^{\infty} \seminorm{\partial I_1[u-u(0)]}{}^2\rmd \tmu
		= \int_0^{\infty} \seminorm{\sum_{j=1}^p (u(x_j)-u(0)) l_j'}{}^2\rmd \tmu\\
		&\leq 2\max_{j=1,\dots,n} \seminorm{u(x_j)-u(0)}{}^2 \int_0^{\infty} \sum_{j=1}^p \seminorm{l_j'}{}^2\rmd \tmu
	\end{align*}
	The second term is bounded for fixed $p$. As for the first term, simple computations give
	\begin{align*}
		\max_{j=1,\dots,n} \seminorm{u(x_j)-u(0)}{}
		\leq\int_0^{y_3}  \seminorm{\partial u}{}
		\leq \norm{\partial u }{L^2_{\tmu}(0, y_3)}\sqrt{\int_0^{y_3} \tmu^{-1}}.
	\end{align*}
	This, together with analogous computations on $(-\infty, 0]$, gives the first estimate.
	
	To prove the second estimate, observe that
	\begin{align*}
		\norm{\Delta_{\nu}[u]}{L^2_{\mu}(\R)}
		=\norm{I_\nu[u] - I_{\nu-1}[u]}{L^2_{\mu}(\R)}
		\leq \norm{u- I_\nu[u] }{L^2_{\mu}(\R)} + \norm{u- I_{\nu-1}[u] }{L^2_{\mu}(\R)}
	\end{align*}
	The previous lemma and simple computations imply the second estimate.
\end{proof}

We now give additional details about the sparse grid interpolation result from Section~\ref{sec:midset_selection_theoretical}.
\begin{proof}[Detailed proof of Theorem~\ref{th:conv_SG_C}]
	The sum in the proof of Theorem~\ref{th:conv_SG_C}  is finite if $\tau \geq \frac{1}{p+1}$ and in this case it equals
\begin{align}\label{eq:sigma_conv_test_pwPolySG}
	\sum_{\nu_i\geq 2} \left(C_2 (2^{\nu_i}\rho_i)^{-p}\right)^{\tau} \left(p2^{\nu_i}\right)^{1-\tau}
	= C_2^{\tau} \rho_i^{-p\tau} p^{1-\tau} \sigma(p,\tau),
\end{align}
where $\sigma(p,\tau)$ is defined in the statement.
Therefore,
\begin{align*}
	\sum_{\bnu\in\N_0^N}\PP_{\bnu}^{\tau} w_{\bnu} \leq
	\prod_{i=1}^N \left( 1 + \left(C_1 \rho_i^{-1}\right)^{\tau} \left(2p\right)^{1-\tau}+
	C_2^{\tau} \rho_i^{-p\tau} p^{1-\tau} \sigma(p,\tau)
	\right).
\end{align*}
Recall that $\rho_i = 2^{(1-\alpha)\lceil\log_2(i)\rceil/2}$ for all $i\in\N$ as in \eqref{eq:def_rho_n}. 
Denote $\ell(i)\coloneqq \lceil\log_2(i)\rceil$,
$\rhol \coloneqq 2^{(1-\alpha)\ell/2}$ for all $i,\ell\in\N$. 
Thus,
\begin{align*}
	\sum_{\bnu\in\N_0^N}\PP_{\bnu}^{\tau} w_{\bnu} \leq
	(1+F_0) \prod_{\ell\geq 1} (1+F_{\ell})^{2^{\ell-1}},
\end{align*}
where
\begin{align*}
	F_{\ell} \coloneqq 
	\left(C_1 \rhol^{-1}\right)^{\tau} \left(2p\right)^{1-\tau}
	+ C_2^{\tau} \rhol^{-p\tau} p^{1-\tau} \sigma(p,\tau)\qquad \forall\ell\geq 0.
\end{align*}
We estimate
\begin{align*}
	\prod_{\ell\geq 1} (1+F_{\ell})^{2^{\ell-1}}
	\leq \exp\left( \sum_{\ell\geq1} 2^{\ell-1} \log \left(1+F_{\ell}\right)\right)
	\leq \exp\left( \sum_{\ell\geq1} 2^{\ell-1} F_{\ell} \right).
\end{align*}
From the definition of $F_{\ell}$,
\begin{align*}
	\sum_{\ell\geq1} 2^{\ell-1} F_{\ell}
	= \frac{C_1^{\tau} \left(2p\right)^{1-\tau}}{2} \sum_{\ell\geq1} 2^{\ell} \rhol^{-\tau} +  
	\frac{C_2^{\tau} \sigma(p,\tau) p^{1-\tau}}{2} \sum_{\ell\geq1} 2^{\ell}  \rhol^{-p\tau}.
\end{align*}
The second sum in the right-hand side is finite if ${(1-\alpha)}\tau>\frac{2}{p}$ (implies the condition $\tau > \frac{1}{p+1}$ found above) and in this case it equals:
\begin{align*}
	\sum_{\ell\geq1} 2^{\ell} \rhol^{-p\tau} = \sum_{\ell\geq1} 2^{\ell} 2^{-(1-\alpha)p\tau\ell/2} = \frac{1}{1-2^{1-{(1-\alpha)}p\tau/2}}.
\end{align*}
Conversely, the first sum diverges as $0<\tau,\alpha<1$ and the sum of the first $L\in\N$ terms equals
\begin{align*}
	\sum_{\ell=0}^L 2^{\ell} \rhol^{-\tau} = \frac{1-2^{(1-(1-\alpha)\tau/2)L}}{1-2^{1-{(1-\alpha)}\tau/2}} = \OO\left(2^{(1-(1-\alpha)\tau/2)L}\right).
\end{align*}
Finally, we recall that the number of parametric dimensions is $N=2^L$ (see Section \ref{sec:from_SPDE_to_paramPDE_general}) to obtain the statement.
\end{proof}
We now prove the lemmata we stated in Section~\ref{sec:improved_profit}.
These results, while technical, are instrumental in the  proof of dimension-independent convergence of sparse grid interpolation with improved profits.
\begin{proof}[Proof of Lemma~\ref{lemma:summability_in_F_01}]
Choose $\eps>0$ such that $\frac{1}{1+\eps}\geq p$ and $q> p(1+\eps)$. We consider $\ba\in\R^{\N}$ such that $\alpha > \seminorm{\ba}{1/(1+\eps)}$ and write
\begin{align*}
\sum_{\bnu\in\FFd} \left( \seminorm{\bnu}{1}!\ \bm{a}^{\bnu} \right)^q
= \sum_{\bnu\in\FFd} \left( \seminorm{\bnu}{1}!\ \alpha^{\seminorm{\bnu}{1}} \left(\frac{\bm{a}}{\alpha}\right)^{\bnu} \right)^q.
\end{align*}
There exists $C_{\eps}>0$ such that $ \alpha^{\seminorm{\bnu}{1}} \leq C_{\eps} \left(\seminorm{\bnu}{1}!\right)^{\eps}$ for all $\bnu\in \FFd$. Thus,
\begin{align*}
\sum_{\bnu\in\FFd} \left( \seminorm{\bnu}{1}!\ \bm{a}^{\bnu} \right)^q
\lesssim
\sum_{\bnu\in\FFd} \left( \left(\seminorm{\bnu}{1}!\right)^{1+\eps} \left(\frac{\bm{a}}{\alpha}\right)^{\bnu} \right)^q.
\end{align*}
Factorizing out the $1+\eps$ yields
\begin{align*}
\sum_{\bnu\in\FFd} \left( \seminorm{\bnu}{1}!\ \bm{a}^{\bnu} \right)^q
\lesssim
\sum_{\bnu\in\FFd} \left( \seminorm{\bnu}{1}! \left(\frac{\bm{a}}{\alpha}\right)^{\frac{1}{1+\eps}\bnu} \right)^{(1+\eps)q}.
\end{align*}
Since $\bnu! =1$ for all $\bnu\in\FFd$, we can write
\begin{align}\label{eq:bound_proof_summability_F01}
\sum_{\bnu\in\FFd} \left( \seminorm{\bnu}{1}!\ \bm{a}^{\bnu} \right)^q
\lesssim
\sum_{\bnu\in\FFd} \left( \frac{\seminorm{\bnu}{1}!}{\bnu!} \left(\frac{\bm{a}}{\alpha}\right)^{\frac{1}{1+\eps}\bnu} \right)^{(1+\eps)q}.
\end{align}
Observe that $\sum_{j} (\frac{a_j}{\alpha})^{\frac{1}{1+\eps}} <1 $ because of the definition of $\alpha$.
Moreover, from the assumption on $\ba$ we have $\left(\frac{\ba}{\alpha}\right)^{\frac{1}{1+\eps}}\in \ell^{r}(\N)$ for any $r\geq p(1+\eps)$.
Then, \cite[Theorem 1]{Cohen2011Analytic} implies that the second sum in \eqref{eq:bound_proof_summability_F01} is finite, thus proving the statement.
\end{proof}

\begin{proof}[Proof of Lemma~\ref{lemma:finite_factor_product_1}]
For this proof, we denote the level of $i$ by $\ell(i)$.
First observe that, from the definitions of value and work, we may write
\begin{align*}
\prod_{i:\hat{\nu}_i\leq1 } \left(v_{\hat{\nu}_i}^{\tau}w_{\hat{\nu}_i}^{1-\tau}\right) =
\prod_{i:\hat{\nu}_i=1 } \left(C_1 2^{-\left(\frac{3}{2}-\delta\right)\ell(i)} r_{\ell(i)}(\bnu) \right)^{\tau} \left( 2p \right)^{1-\tau}.
\end{align*}
The factors in the right-hand side are independent of the components of $\bnu$ for which $\nu_i\neq 1$. Thus, we define
\begin{align*}
D_{\bnu} = \set{\bm{d}\in \FF}{\begin{cases}
d_i=0\qquad &\textrm{if } \nu_i>1\\
d_i\in\left\{0,1\right\} \qquad & \textrm{otherwise}
\end{cases}} \subset \FFd
\end{align*}
and substitute
\begin{align*}
\sum_{\hbnu\in K_{\bnu}} \prod_{i:\hat{\nu}_i=1 }
\left( C_1 2^{-\left(\frac{3}{2}-\delta\right)\ell(i)} r_{\ell(i)}(\hbnu) \right)^{\tau}
\left( 2p \right)^{1-\tau}
=
\sum_{\bm{d}\in D_{\bnu}} \prod_{i:d_i=1 }
\left(C_1 2^{-\left(\frac{3}{2}-\delta\right)\ell(i)} r_{\ell(i)}(\bm{d}) \right)^{\tau}
\left(2p \right)^{1-\tau}.
\end{align*}
From the definition of $r_{\ell(i)}(\bm{d})$, we  estimate 
$ \prod_{i:d_i=1 } r_{\ell(i)}(\bm{d}) \leq
\prod_{\ell:\exists j:d_{\ell,j}=1} r_{\ell}(\bm{d})^{r_{\ell} (\bm{d})}$.
Stirling's formula gives $r_{\ell}(\bm{d})^{r_{\ell} (\bm{d})} \leq r_{\ell}(\bm{d})! e^{r_{\ell}(\bm{d})} $. 
Denote $\b{d}_{\ell}= \left(d_{\ell,1},\dots, d_{\ell, \lceil 2^{\ell-1}\rceil}\right)$ for any $\ell\in\N_0$ and observe that $r_{\ell}(\bm{d}) \leq \seminorm{\bm{d}_{\ell}}{1}$. Together with an elementary property of the factorial, this gives
$
\prod_{\ell:\exists j:d_{\ell,j}=1} \left(r_{\ell}(\bm{d})\right)!
\leq \prod_{\ell:\exists j:d_{\ell,j}=1} \seminorm{\bm{d}_{\ell}}{1}!
\leq \left(\sum_{\ell\in\N}\seminorm{\bm{d}_{\ell}}{1}\right)!
= \seminorm{\bm{d}}{1}!.
$
To summarize, we have estimated
\begin{align*}
\sum_{\hbnu\in K_{\bnu}} \prod_{i:\hat{\nu}_i\leq1 } \left(v_{\hat{\nu}_i}^{\tau}w_{\hat{\nu}_i}^{1-\tau}\right)
\leq
\sum_{\bm{d}\in D_{\bnu}}  (\seminorm{\bm{d}}{1}!)^\tau \prod_{i:d_i=1 }
\left( C_1^{\tau} 2^{-\left(\frac{3}{2}-\delta\right)\ell(i) \tau} \left( 2p \right)^{1-\tau} e^\tau \right).
\end{align*}
Define $c_j \coloneqq C_1 2^{-\left(\frac{3}{2}-\delta\right)\ell(j)} \left( 2p \right)^{(1-\tau)/\tau} e$ for all $j\in\N$ to obtain
$\sum_{\hbnu\in K_{\bnu}} \prod_{i:\hat{\nu}_i\leq1 } \left(v_{\hat{\nu}_i}^{\tau}w_{\hat{\nu}_i}^{1-\tau}\right)
\leq 
\sum_{\bm{d}\in D_{\bnu}}  \Big(\seminorm{\bm{d}}{1}! \bm{c}^{\bm{d}}\Big)^{\tau}$.
Simple computations reveal that $\bm{c}=(c_j)_j\in \ell^\tau(\N)$ for all $\tau> (\frac{3}{2}-\delta)^{-1}$. We apply the previous lemma and conclude the proof.
\end{proof}
\bibliographystyle{alpha}
\bibliography{literature}

\newcommand{\etalchar}[1]{$^{#1}$}
\begin{thebibliography}{BBNP14b}

\bibitem[AdBH14]{alouge2014asemidiscrete}
Fran\c{c}ois Alouges, Anne de~Bouard, and Antoine Hocquet.
\newblock A semi-discrete scheme for the stochastic {L}andau-{L}ifshitz equation.
\newblock {\em Stoch. Partial Differ. Equ. Anal. Comput.}, 2(3):281--315, 2014.

\bibitem[AFKL21]{Akrivis2021Highorder}
Georgios Akrivis, Michael Feischl, Bal\'{a}zs Kov\'{a}cs, and Christian Lubich.
\newblock Higher-order linearly implicit full discretization of the {L}andau-{L}ifshitz-{G}ilbert equation.
\newblock {\em Math. Comp.}, 90(329):995--1038, 2021.

\bibitem[Alo08]{Alouges:2008}
Fran\c{c}ois Alouges.
\newblock A new finite element scheme for {L}andau--{L}ifchitz equations.
\newblock {\em Discrete Contin. Dyn. Syst. Ser. S}, 1(2):187--196, 2008.

\bibitem[BBNP14a]{Banas2014Stochastic}
Lubomir Banas, Zdzislaw Brzezniak, Mikhail Neklyudov, and Andreas Prohl.
\newblock {\em Stochastic Ferromagnetism. Analysis and Numerics}.
\newblock De Gruyter, Berlin, Boston, 2014.

\bibitem[BBNP14b]{BanasBrzezniak2014}
{\v{L}}ubom{\'{i}}r Ba\v{n}as, Zdzis{\l}aw Brze\'{z}niak, Mikhail Neklyudov, and Andreas Prohl.
\newblock A convergent finite-element-based discretization of the stochastic {L}andau-{L}ifshitz-{G}ilbert equation.
\newblock {\em IMA J. Numer. Anal.}, 34(2):502--549, 2014.

\bibitem[BBP13]{BanasBrzezniak2013}
{\v{L}}ubom{\'{i}}r Ba{\v{n}}as, Zdzis{\l}aw Brze{\'{z}}niak, and Andreas Prohl.
\newblock Computational studies for the stochastic {L}andau-{L}ifshitz-{G}ilbert equation.
\newblock {\em SIAM J. Sci. Comput.}, 35(1):B62--B81, 2013.

\bibitem[BCDM17]{Bachmayr2017Sparse}
Markus Bachmayr, Albert Cohen, Ronald DeVore, and Giovanni Migliorati.
\newblock Sparse polynomial approximation of parametric elliptic {PDE}s. {P}art {II}: {L}ognormal coefficients.
\newblock {\em ESAIM Math. Model. Numer. Anal.}, 51(1):341--363, 2017.

\bibitem[Ber07]{berkov2007magnetization}
Dmitri~V Berkov.
\newblock Magnetization dynamics including thermal fluctuations: Basic phenomenology, fast remagnetization processes and transitions over high-energy barriers.
\newblock {\em Handbook of Magnetism and Advanced Magnetic Materials}, 2007.

\bibitem[BG04]{bungartz2004sparse}
Hans-Joachim Bungartz and Michael Griebel.
\newblock Sparse grids.
\newblock {\em Acta Numer.}, 13:147--269, 2004.

\bibitem[BGJ13]{BrzezniakGoldys2013}
Zdzis{\l}aw Brze{\'{z}}niak, Beniamin Goldys, and Terence Jegaraj.
\newblock Weak solutions of a stochastic {L}andau-{L}ifshitz-{G}ilbert equation.
\newblock {\em Appl. Math. Res. Express. AMRX}, (1):1--33, 2013.

\bibitem[BGJ17]{Brzezniak2017Large}
Zdzis{\l}aw Brze{\'{z}}niak, Ben Goldys, and Terence Jegaraj.
\newblock Large deviations and transitions between equilibria for stochastic {L}andau-{L}ifshitz-{G}ilbert equation.
\newblock {\em Arch. Ration. Mech. Anal.}, 226(2):497--558, 2017.

\bibitem[BJ63]{brown1963thermal}
William~Fuller Brown~Jr.
\newblock Thermal fluctuations of a single-domain particle.
\newblock {\em Physical review}, 130(5):1677, 1963.

\bibitem[BL16]{Brzezniak2013AnisoSLLG}
Zdzis{\l}aw Brze{\'{z}}niak and Liang Li.
\newblock Weak solutions of the stochastic {L}andau-{L}ifshitz-{G}ilbert equations with nonzero anisotrophy energy.
\newblock {\em Appl. Math. Res. Express. AMRX}, (2):334--375, 2016.

\bibitem[BMM19]{Brzezniak2019WongZakai}
Zdzis{\l}aw Brze\'{z}niak, Utpal Manna, and Debopriya Mukherjee.
\newblock Wong-{Z}akai approximation for the stochastic {L}andau-{L}ifshitz-{G}ilbert equations.
\newblock {\em J. Differential Equations}, 267(2):776--825, 2019.

\bibitem[BNT10]{Babuska2007Stocashtic}
Ivo Babu\v{s}ka, Fabio Nobile, and Ra\'{u}l Tempone.
\newblock A stochastic collocation method for elliptic partial differential equations with random input data.
\newblock {\em SIAM Rev.}, 52(2):317--355, 2010.

\bibitem[BP06]{BartelsProhl:2006}
S\"{o}ren Bartels and Andreas Prohl.
\newblock Convergence of an implicit finite element method for the {L}andau--{L}ifshitz--{G}ilbert equation.
\newblock {\em SIAM J. Numer. Anal.}, 44(4):1405--1419, 2006.

\bibitem[BSD13]{Belomestny2013multilevel}
Denis Belomestny, John Schoenmakers, and Fabian Dickmann.
\newblock Multilevel dual approach for pricing {American} style derivatives.
\newblock {\em Finance Stoch.}, 17(4):717--742, 2013.

\bibitem[CCS14]{Chkifa2014High}
Abdellah Chkifa, Albert Cohen, and Christoph Schwab.
\newblock High-dimensional adaptive sparse polynomial interpolation and applications to parametric {PDE}s.
\newblock {\em Found. Comput. Math.}, 14(4):601--633, 2014.

\bibitem[CCS15]{chkifa2015breaking}
Abdellah Chkifa, Albert Cohen, and Christoph Schwab.
\newblock Breaking the curse of dimensionality in sparse polynomial approximation of parametric {PDE}s.
\newblock {\em J. Math. Pures Appl. (9)}, 103(2):400--428, 2015.

\bibitem[CDG98]{partreg2}
Yunmei Chen, Shijin Ding, and Boling Guo.
\newblock Partial regularity for two-dimensional {L}andau-{L}ifshitz equations.
\newblock {\em Acta Math. Sinica (N.S.)}, 14(3):423--432, 1998.

\bibitem[CDS10]{Cohen2010}
Albert Cohen, Ronald DeVore, and Christoph Schwab.
\newblock Convergence rates of best {$N$}-term {G}alerkin approximations for a class of elliptic s{PDE}s.
\newblock {\em Found. Comput. Math.}, 10(6):615--646, 2010.

\bibitem[CDS11]{Cohen2011Analytic}
Albert Cohen, Ronald Devore, and Christoph Schwab.
\newblock Analytic regularity and polynomial approximation of parametric and stochastic elliptic {PDE}'s.
\newblock {\em Anal. Appl. (Singap.)}, 9(1):11--47, 2011.

\bibitem[CF01]{carbour3}
Gilles Carbou and Pierre Fabrie.
\newblock Regular solutions for {L}andau-{L}ifschitz equation in {$\mathbb R^3$}.
\newblock {\em Commun. Appl. Anal.}, 5(1):17--30, 2001.

\bibitem[CGST11]{Cliffe2011}
K.~A. Cliffe, M.~B. Giles, R.~Scheichl, and A.~L. Teckentrup.
\newblock Multilevel {Monte} {Carlo} methods and applications to elliptic {PDEs} with random coefficients.
\newblock {\em Comput. Vis. Sci.}, 14(1):3, 2011.

\bibitem[Cim07]{cimExist}
Ivan Cimr{\'a}k.
\newblock Existence, regularity and local uniqueness of the solutions to the {M}axwell--{L}andau--{L}ifshitz system in three dimensions.
\newblock {\em J. Math. Anal. Appl.}, 329:1080--1093, 2007.

\bibitem[CSZ18]{Cohen2018Stokes}
Albert Cohen, Christoph Schwab, and Jakob Zech.
\newblock Shape holomorphy of the stationary {N}avier-{S}tokes equations.
\newblock {\em SIAM J. Math. Anal.}, 50(2):1720--1752, 2018.

\bibitem[Die69]{dieudonne1960treatise}
Jean Dieudonn\'{e}.
\newblock {\em Foundations of modern analysis}.
\newblock Pure and Applied Mathematics, Vol. 10-I. Academic Press, New York-London, 1969.
\newblock Enlarged and corrected printing.

\bibitem[DNSZ23]{dung2023analyticity}
Dinh Dũng, Van~Kien Nguyen, Christoph Schwab, and Jakob Zech.
\newblock {\em Analyticity and Sparsity in Uncertainty Quantification for {PDE}s with Gaussian Random Field Inputs}.
\newblock Lecture Notes in Mathematics. Springer Cham, 2023.

\bibitem[Dos77]{doss}
Halim Doss.
\newblock Liens entre \'{e}quations diff\'{e}rentielles stochastiques et ordinaires.
\newblock {\em Ann. Inst. H. Poincar\'{e} Sect. B (N.S.)}, (no. 2,):99--125, 1977.

\bibitem[Dra03]{dragomir2003applications}
Sever~Silvestru Dragomir.
\newblock {\em Some {G}ronwall type inequalities and applications}.
\newblock Nova Science Publishers, Inc., Hauppauge, NY, 2003.

\bibitem[EST18]{Ernst2018Convergence}
Oliver~G. Ernst, Bj\"{o}rn Sprungk, and Lorenzo Tamellini.
\newblock Convergence of sparse collocation for functions of countably many {G}aussian random variables (with application to elliptic {PDE}s).
\newblock {\em SIAM J. Numer. Anal.}, 56(2):877--905, 2018.

\bibitem[FT17a]{FeischlTran:2017_fembem}
Michael Feischl and Thanh Tran.
\newblock The eddy current--{LLG} equations: {FEM}-{BEM} coupling and a priori error estimates.
\newblock {\em SIAM J. Numer. Anal.}, 55(4):1786--1819, 2017.

\bibitem[FT17b]{feischl2017existence}
Michael Feischl and Thanh Tran.
\newblock Existence of regular solutions of the {L}andau-{L}ifshitz-{G}ilbert equation in 3{D} with natural boundary conditions.
\newblock {\em SIAM J. Math. Anal.}, 49(6):4470--4490, 2017.

\bibitem[GG03]{GerstnerGriebel2003}
Thomas Gerstner and Michael Griebel.
\newblock Dimension-adaptive tensor-product quadrature.
\newblock {\em Computing}, 71(1):65--87, 2003.

\bibitem[GGL20]{Goldys202Multidimensional}
Beniamin Goldys, Joseph~F. Grotowski, and Kim-Ngan Le.
\newblock Weak martingale solutions to the stochastic {L}andau-{L}ifshitz-{G}ilbert equation with multi-dimensional noise via a convergent finite-element scheme.
\newblock {\em Stochastic Process. Appl.}, 130(1):232--261, 2020.

\bibitem[Gil15]{Giles2015Multilevel}
Michael~B. Giles.
\newblock Multilevel {M}onte {C}arlo methods.
\newblock {\em Acta Numer.}, 24:259–328, 2015.

\bibitem[GKS17]{griebel2017anova}
Michael Griebel, Frances~Y. Kuo, and Ian~H. Sloan.
\newblock The {ANOVA} decomposition of a non-smooth function of infinitely many variables can have every term smooth.
\newblock {\em Math. Comp.}, 86(306):1855--1876, 2017.

\bibitem[GLT16]{goldysLeTran}
Beniamin Goldys, Kim-Ngan Le, and Thanh Tran.
\newblock A finite element approximation for the stochastic {L}andau-{L}ifshitz-{G}ilbert equation.
\newblock {\em J. Differential Equations}, 260(2):937--970, 2016.

\bibitem[GPL98]{garcia1998langevin}
Jos{\'e}~Luis Garc{\'\i}a-Palacios and Francisco~J L{\'a}zaro.
\newblock Langevin-dynamics study of the dynamical properties of small magnetic particles.
\newblock {\em Phys. Rev. B}, 58(22):14937, 1998.

\bibitem[GR12]{Giles2012Stochastic}
Michael~B. Giles and Christoph Reisinger.
\newblock Stochastic finite differences and multilevel {M}onte {C}arlo for a class of {SPDE}s in finance.
\newblock {\em SIAM J. Financial Math.}, 3(1):572--592, 2012.

\bibitem[GZ07]{Ganapathysubramanian2007}
Baskar Ganapathysubramanian and Nicholas Zabaras.
\newblock Sparse grid collocation schemes for stochastic natural convection problems.
\newblock {\em J. Comput. Phys.}, 225(1):652--685, 2007.

\bibitem[Her89]{Herve1989Analyticity}
Michel Hervé.
\newblock {\em Analyticity in Infinite Dimensional Spaces}.
\newblock De Gruyter, Berlin, New York, 1989.

\bibitem[HS19]{Herrmann2019Multilevel}
Lukas Herrmann and Christoph Schwab.
\newblock Multilevel quasi-{M}onte {C}arlo integration with product weights for elliptic {PDE}s with lognormal coefficients.
\newblock {\em ESAIM Math. Model. Numer. Anal.}, 53(5):1507--1552, 2019.

\bibitem[HSS23]{schwabGevrey}
Helmut Harbrecht, Marc Schmidlin, and Christoph Schwab.
\newblock The {G}evrey class implicit mapping theorem with application to {UQ} of semilinear elliptic {PDE}s, 2023.

\bibitem[Kak48]{Kakutani1948On}
Shizuo Kakutani.
\newblock On equivalence of infinite product measures.
\newblock {\em Ann. of Math.}, 49(1):214--224, 1948.

\bibitem[KH70]{kubo1970brownian}
Ryogo Kubo and Natsuki Hashitsume.
\newblock Brownian motion of spins.
\newblock {\em Prog. Theor. Phys. Supp.}, 46:210--220, 1970.

\bibitem[KPP{\etalchar{+}}19]{precond}
Johannes Kraus, Carl-Martin Pfeiler, Dirk Praetorius, Michele Ruggeri, and Bernhard Stiftner.
\newblock Iterative solution and preconditioning for the tangent plane scheme in computational micromagnetics.
\newblock {\em J. Comput. Phys.}, 398:108866, 27, 2019.

\bibitem[KRVE05]{kohn2005magnetic}
Robert~V Kohn, Maria~G Reznikoff, and Eric Vanden-Eijnden.
\newblock Magnetic elements at finite temperature and large deviation theory.
\newblock {\em J. Nonlinear Sci.}, 15:223--253, 2005.

\bibitem[KSS15]{Kuo2015Multi}
Frances~Y. Kuo, Christoph Schwab, and Ian~H. Sloan.
\newblock Multi-level quasi-{M}onte {C}arlo finite element methods for a class of elliptic {PDE}s with random coefficients.
\newblock {\em Found. Comput. Math.}, 15(2):411--449, 2015.

\bibitem[LLW15]{Lin}
Junyu Lin, Baishun Lai, and Changyou Wang.
\newblock Global well-posedness of the {L}andau-{L}ifshitz-{G}ilbert equation for initial data in {M}orrey spaces.
\newblock {\em Calc. Var. Partial Differential Equations}, 54(1):665--692, 2015.

\bibitem[LP18]{Lang2018Monte}
Annika Lang and Andreas Petersson.
\newblock {M}onte {C}arlo versus multilevel {M}onte {C}arlo in weak error simulations of {SPDE} approximations.
\newblock {\em Math. Comput. Simul.}, 143:99--113, 2018.
\newblock 10th IMACS Seminar on Monte Carlo Methods.

\bibitem[LPS14]{Lord2014Introduction}
Gabriel~J. Lord, Catherine~E. Powell, and Tony Shardlow.
\newblock {\em An introduction to computational stochastic {PDE}s}.
\newblock Cambridge Texts in Applied Mathematics. Cambridge University Press, New York, 2014.

\bibitem[LSU68]{ladyzhenskaya1968linear}
Olga Lady\v{z}enskaja, Vsevolod. Solonnikov, and Nina.~N. Ural'ceva.
\newblock {\em Linear and quasilinear equations of parabolic type}.
\newblock Translations of Mathematical Monographs, Vol. 23. American Mathematical Society, Providence, R.I., 1968.
\newblock Translated from the Russian by S. Smith.

\bibitem[LW19]{Luo2019Multilevel}
Yan Luo and Zhu Wang.
\newblock A multilevel {M}onte {C}arlo ensemble scheme for random parabolic {PDE}s.
\newblock {\em SIAM J. Sci. Comput.}, 41(1):A622--A642, 2019.

\bibitem[MBS09]{mayergoyz2009nonlinear}
Isaak~D Mayergoyz, Giorgio Bertotti, and Claudio Serpico.
\newblock {\em Nonlinear magnetization dynamics in nanosystems}.
\newblock Elsevier, 2009.

\bibitem[Mel05]{partreg1}
Christof Melcher.
\newblock Existence of partially regular solutions for {L}andau-{L}ifshitz equations in {$\mathbb R^3$}.
\newblock {\em Comm. Partial Differential Equations}, 30(4-6):567--587, 2005.

\bibitem[Mel12]{Melcher12}
C.~Melcher.
\newblock Global solvability of the {C}auchy problem for the {L}andau-{L}ifshitz-{G}ilbert equation in higher dimensions.
\newblock {\em Indiana Univ. Math. J.}, 61(3):1175--1200, 2012.

\bibitem[Mos05]{partreg4}
Roger Moser.
\newblock {\em Partial regularity for harmonic maps and related problems}.
\newblock World Scientific Publishing Co. Pte. Ltd., Hackensack, NJ, 2005.

\bibitem[NT09]{Nobile2009Parabolic}
Fabio Nobile and Raul Tempone.
\newblock Analysis and implementation issues for the numerical approximation of parabolic equations with random coefficients.
\newblock {\em Int. J. Numer. Methods Eng.}, 80(6-7):979--1006, 2009.

\bibitem[NTT16]{nobile2016convergence}
Fabio Nobile, Lorenzo Tamellini, and Raul Tempone.
\newblock Convergence of quasi-optimal sparse-grid approximation of {H}ilbert-space-valued functions: application to random elliptic {PDE}s.
\newblock {\em Numer. Math.}, 134(2):343--388, 2016.

\bibitem[NTW08a]{nobile2008anisotropic}
Fabio Nobile, Raul Tempone, and Clayton~G. Webster.
\newblock An anisotropic sparse grid stochastic collocation method for partial differential equations with random input data.
\newblock {\em SIAM J. Numer. Anal.}, 46(5):2411--2442, 2008.

\bibitem[NTW08b]{Nobile2008Sparse}
Fabio Nobile, Raul Tempone, and Clayton~G. Webster.
\newblock A sparse grid stochastic collocation method for partial differential equations with random input data.
\newblock {\em SIAM J. Numer. Anal.}, 46(5):2309--2345, 2008.

\bibitem[Pfe22]{PfeilerCarl-Martin2022Naae}
Carl-Martin Pfeiler.
\newblock {\em Numerical analysis and efficient simulation of micromagnetic phenomena}.
\newblock Phd thesis, TU Wien, 2022.
\newblock Available at \url{https://repositum.tuwien.at/handle/20.500.12708/19849?mode=simple}.

\bibitem[PS01]{Schnaubelt2011Solvability}
Jan Pr\"{u}ss and Roland Schnaubelt.
\newblock Solvability and maximal regularity of parabolic evolution equations with coefficients continuous in time.
\newblock {\em J. Math. Anal. Appl.}, 256(2):405--430, 2001.

\bibitem[SSF01]{Scholz2001Thermal}
Werner Scholz, Thomas Schrefl, and Josef Fidler.
\newblock Micromagnetic simulation of thermally activated switching in fine particles.
\newblock {\em J. Magn. Magn. Mater.}, 233(3):296--304, 2001.

\bibitem[Ste01]{steele2001Stochastic}
J.~Michael Steele.
\newblock {\em Stochastic calculus and financial applications}, volume~45 of {\em Applications of Mathematics (New York)}.
\newblock Springer-Verlag, New York, 2001.

\bibitem[Sus78]{sussman}
H\'{e}ctor~J. Sussmann.
\newblock On the gap between deterministic and stochastic ordinary differential equations.
\newblock {\em Ann. Probability}, 6(1):19--41, 1978.

\bibitem[TJWG15]{teckentrup2015multilevel}
Aretha~L. Teckentrup, Peter Jantsch, Clayton~G. Webster, and Max Gunzburger.
\newblock A multilevel stochastic collocation method for partial differential equations with random input data.
\newblock {\em SIAM/ASA J. Uncertain. Quantif.}, 3(1):1046--1074, 2015.

\bibitem[TSGU13]{Teckentrup2013}
A.~L. Teckentrup, R.~Scheichl, M.~B. Giles, and E.~Ullmann.
\newblock Further analysis of multilevel {Monte} {Carlo} methods for elliptic {PDEs} with random coefficients.
\newblock {\em Numer. Math.}, 125(3):569--600, 2013.

\bibitem[WYW06]{Wu2006Elliptic}
Zhuoqun Wu, Jingxue Yin, and Chunpeng Wang.
\newblock {\em Elliptic \& parabolic equations}.
\newblock World Scientific Publishing Co. Pte. Ltd., Hackensack, NJ, 2006.

\bibitem[XH05]{Xiu2005High}
Dongbin Xiu and Jan~S. Hesthaven.
\newblock High-order collocation methods for differential equations with random inputs.
\newblock {\em SIAM J. Sci. Comput.}, 27(3):1118--1139, 2005.

\bibitem[ZG12]{Zhang2012Error}
Guannan Zhang and Max Gunzburger.
\newblock Error analysis of a stochastic collocation method for parabolic partial differential equations with random input data.
\newblock {\em SIAM J. Numer. Anal.}, 50(4):1922--1940, 2012.

\end{thebibliography}
\end{document}